\DeclareMathOperator{\arccosh}{arccosh}
\newtheorem{theorem}{Theorem}[section]
\newtheorem{prop}{Proposition}[section]
\newtheorem{remark}{\textbf{Remark}}[section]
\def\rr{\mathbb{R}}
\def\ss{\mathbb{S}}
\def\hh{\mathbb{H}}
\def\bb{\mathbb{B}}
\def\tr{\mathrm{tr}}
\def\O{\Omega}
\def\p{\partial}
\def\a{\alpha}
\def\b{\beta}
\def\d{\delta}
\def\th{\theta}
\def\s{\sigma}
\def\p{\partial}
\def\S{{\Sigma}}
\def\<{\langle}
\def\>{\rangle}
\def\div{{\rm div}}
\def\n{\nabla}
\def\G{\Gamma}
\def\ode{\bar{\Delta}}
\def\on{\bar{\nabla}}
\def\De{\Delta}
\def\vp{\varphi}
\def\R{{\mathbb R}}
\def\ep{\epsilon}
\def\w{\mathcal{W}}
\def\mm{\mathbb{M}}
\numberwithin{equation} {section}
\begin{document}

\title[Uniqueness of stable capillary hypersurfaces]{Uniqueness of stable capillary hypersurfaces in a ball}
\author{Guofang Wang}
\address{ Universit\"at Freiburg,
Mathematisches Institut,
Ernst-Zermelo-Str. 1,
79104 Freiburg, Germany}
\email{guofang.wang@math.uni-freiburg.de}
\author{Chao Xia}
\address{School of Mathematical Sciences\\
Xiamen University\\
361005, Xiamen, P.R. China}
\email{chaoxia@xmu.edu.cn}
\thanks{Part of this work was done while CX was visiting the mathematical institute of Albert-Ludwigs-Universit\"at Freiburg. He would like to thank the institute for its hospitality. Research of CX is  supported in part by NSFC (Grant No. 11501480) and  the Natural Science Foundation of Fujian Province of China (Grant No. 2017J06003), and the Fundamental Research Funds for the Central Universities (Grant No. 20720180009).
}

\begin{abstract}
    In this paper we prove that any immersed stable capillary hypersurfaces in a ball in space forms are totally umbilical. 
    Our result also provides a proof
of a conjecture proposed by Sternberg-Zumbrun in {\it J Reine Angew Math 503 (1998), 63--85}.
 We also prove a Heintze-Karcher-Ros type inequality for hypersurfaces with free boundary 
 in a ball, which, together with the new Minkowski formula,
 yields a new proof of Alexandrov's Theorem for embedded CMC hypersurfaces in a ball with free boundary.
     \end{abstract}

\date{}
\maketitle

\medskip

\tableofcontents

\section{
Introduction}

Let $(\bar M^{n+1}, \bar g)$ be an oriented $(n+1)$-dimensional Riemannian manifold and $B$ be a smooth 
compact domain in $\bar M$ with non-empty boundary $\p B$. 
We are interested in capillary hypersurfaces,  namely    minimal or constant mean curvature (CMC) hypersurfaces 
in $B$ with boundary  on $\p B$ and intersecting $\p B$ at a constant angle $\theta \in (0,\pi)$. Minimal or CMC hypersurfaces 
with free boundary, namely, intersecting $\p B$ orthogonally, are special and  important examples of capillary hypersufaces.
Capillary hypersurfaces are critical points of some geometric variational functional under
certain volume constraint. It  has a very long history.  It was Thomas Young who  first considered  capillary surfaces mathematically  in 1805 and introduced the mathematical concept of
 mean curvature of a surface \cite{Young}. His work was followed by Laplace and later by Gauss. For the reader who are interested in the history of capillary surfaces, we refer to an article of  Finn-McCuan-Wente \cite{FMW}. See also Finn's book \cite{Finn2} for a survey about the mathematical theory of capillary surfaces.

 The stability of minimal or CMC hypersurfaces plays an important role in differential geometry. 
  For closed hypersurfaces (i.e. compact without boundary), there is a classical uniqueness result proved by Barbosa-do Carmo \cite{BDC} and Barbosa-do Carmo-Eschenburg \cite{BDCE}:
 {\it any stable closed CMC hypersurfaces in space forms are geodesic spheres.} 
In this paper we are concerned with stable capillary hypersurfaces in a ball in space forms. It is known that totally geodesic balls and spherical caps are stable and even area-minimizing. In fact, these are only isoperimetric hypersurfaces in a ball 
which was  first proved by Burago-Maz'ya\footnote{The authors would like to thank Professor Frank Morgan for  this information.} \cite{BM} and later also by Bokowsky-Sperner \cite{BoSp}  and Almgren \cite{Almgren}. Ros-Souam \cite{RS} showed that totally geodesic balls and spherical 
caps are capillary stable. Conversely,
the uniqueness problem was first studied by Ros-Vergasta  \cite{RV} for minimal or CMC hypersurfaces in free boundary case, i.e., $\theta=\frac \pi 2$ and later Ros-Souam \cite{RS}  for general capillary ones. Their works
 have been followed
 by many mathematicians. 
 Comparing to the uniqueness result for stable closed hypersurfaces \cite{BDC, BDCE}, there is a natural and long standing open problem on the uniqueness of stable capillary hypersurfaces since the work of Ros-Vergasta and Ros-Souam:
 
\medskip

 {\it Are any immersed stable capillary hypersurfaces in a ball in space forms totally umbilical?}
 
 \medskip
The main objective of this paper is to 
give a complete answer to this open problem.
For convenience, we discuss in the introduction mainly on the case of hypersurfaces in {\it a Euclidean ball with free boundary} and give a brief discussion about general capillary hypersurfaces in a ball in any space forms later.
 It is surprising that this problem leaves quite open 
 except in the following special cases. 
 \begin{enumerate}
 \item When $n\ge 2$, $H=0$ and $\theta=\frac \pi 2$, i.e., in the case of minimal hypersurfaces with free boundary,
 Ros-Vergasta gave an affirmative answer in \cite{RV} (1995). 
 \item When $n=2$, $H=const.$ and $\theta =\frac \pi 2$, i.e., in the case of 2-dimensional CMC surfaces with free boundary, Ros-Vergata \cite{RV} and 
 Nunes \cite{Nunes} (2017) gave an affirmative answer. 
  See also the work of E. Barbosa \cite{Barbosa}.
 \end{enumerate}
 

The stability for CMC hypersurfaces  is defined by using variations with a volume constraint. For minimal ones we also use this stability, which is also 
 called the weak stability. 
 A general way to utilize the stability condition is to find admissible test functions.
 For a volume constraint problem, such an  admissible function $\varphi$
 should satisfy $\int_M \varphi dA=0$, i.e., its average is zero.
In the work  
 of  Barbosa-do Carmo \cite{BDC} 
 for closed hypersurfaces,
the test function is defined by using the classical Minkowski formula, that is, for a closed immersion $x:M\to \R^{n+1}$,
 \begin{eqnarray}
  \label{eq0.1}
  &&\int_M H \< x, \nu \>  dA= n \int_M  dA,
 \end{eqnarray}
 where $H$ is the mean curvature and $\nu$ is the outward unit normal of $M$.
 In fact, in this case the test function is $\psi=n-H \< x, \nu \> $. The Minkowski formula \eqref{eq0.1} implies that this is an admissible  function.
 For a hypersurface $M$ in a ball with free boundary, Ros-Vergasta  \cite{RV} obtained the following Minkowski formula 
 \begin{eqnarray}\label{eq0.2}
  &&|\partial M| = n  |M| -\int_M H \< x, \nu \> dA.
 \end{eqnarray}
 Unlike \eqref{eq0.1}, the Minkowski formula \eqref{eq0.2} provides a relationship among three  geometric quantities, the area of the boundary $\partial M$,
 the area of $M$ and an integral involving the mean curvature.
 It is this complication that makes free boundary problems more difficult than problems for closed hypersurfaces.
In the minimal case, the Minkowski formula \eqref{eq0.2} relates only two geometric quantities, since the term involving the mean curvature vanishes. The proof of 
Result (1) relies
on this fact.

There is another way to find admissible test functions, which is called  a Hersch type balancing
argument. This argument is extremely useful, especially in two-dimensional problems, see for example the work of Li-Yau \cite{LY} and Montiel-Ros \cite{MR}.
 Using such an argument, together with the Minkowski formula \eqref{eq0.2},
Ros-Vergasta proved in \cite{RV} the following partial result.

{\it If $M\subset \bar \bb^3$ is an immersed compact stable CMC surface with
free boundary, then $\p M$ is embedded and the only possibilities are
\begin{enumerate}[label=(\roman*)]
 \item $M$ is a totally geodesic disk;
 \item $M$ is a spherical cap;
 \item $M$ has genus 1 with at most two boundary components.
\end{enumerate}}
Case (iii) was excluded very recently by Nunes \cite{Nunes} by using a new stability criterion and a modified Hersch type balancing argument.
See also the work of E. Barbosa  \cite{Barbosa} without using the modified Hersch type balancing argument. Therefore, when  $n=2$
this open problem was solved. This is result (2).

There are several partial results on the uniqueness of stable CMC hypersurfaces in a Euclidean ball with free boundary, see e.g., \cite{RV, Marinov0, LiXiong, Barbosa}. 



We remark that there are many embedded or non-embedded non-spherical examples.
In fact, for any constant $H > 0$ there
is a piece of an unduloid of mean curvature $H$ in the Euclidean unit ball $\bb^{n+1}$ with free boundary, which is however unstable. In fact,   Ros \cite{Ros2}
proved that neither catenoid nor unduloid pieces, which intersect  $\p\bb^{n+1}$ orthogonally, are stable.
The following uniqueness result classifies all stable immersed CMC hypersurfaces with free boundary in a Euclidean ball.
\begin{theorem}\label{thm0.1}
 Any stable immersed CMC hypersurface with free boundary in a Euclidean ball
is either a totally geodesic ball or a spherical cap.
\end{theorem}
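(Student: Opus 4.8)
I would follow the philosophy of Barbosa--do Carmo: manufacture admissible test functions out of a Minkowski-type formula, feed them into the stability quadratic form, and read off that the umbilicity defect must vanish. The genuinely new difficulties, absent in the closed case, are (i) finding the correct Minkowski formula, (ii) controlling the boundary integral in the stability form, and (iii) the minimal case, which behaves differently. I would dispose of (iii) at once by invoking Ros--Vergasta (Result (1)) and henceforth assume $H\neq 0$, rescaling $\bb=\bb^{n+1}$ to be the unit ball.

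\textbf{Step 1: conformal Killing fields of $\bb$ and the new Minkowski formula.}
The point is to replace the position vector $x$ that appears in the Ros--Vergasta formula \eqref{eq0.2} by the conformal Killing fields of $\rr^{n+1}$ that are tangent to $\partial\bb$: besides the rotations $R_{\alpha\beta}$, these are $X_a(x)=\langle a,x\rangle x-\tfrac12(1+|x|^2)a$ for $a\in\rr^{n+1}$, with conformal factor $\psi_a:=\tfrac1{n+1}\operatorname{div}X_a=\langle a,x\rangle$. On $\partial\bb$ one has $|x|=1$, so $X_a=\langle a,x\rangle x-a$ and hence $\langle X_a,x\rangle=0$ there, i.e. $X_a$ is tangent to $\partial\bb$. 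Since $M$ meets $\partial\bb$ orthogonally, the outward unit conormal $\bar\nu$ of $M$ along $\partial M$ equals the position vector $x$; applying the divergence theorem on $M$ to the tangential part $X_a^{\top}$ and using $\langle X_a,\bar\nu\rangle=0$ on $\partial M$ kills the boundary term, while $\operatorname{div}_M X_a=n\psi_a-H\langle X_a,\nu\rangle$ then yields the new Minkowski formula
\begin{equation*}
\int_M\bigl(n\langle a,x\rangle+H\langle X_a,\nu\rangle\bigr)\,dA=0\qquad\text{for every }a\in\rr^{n+1}.
\end{equation*}
Hence $\varphi_a:=n\langle a,x\rangle+H\langle X_a,\nu\rangle$ has zero mean and so is admissible for the stability inequality.

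\textbf{Step 2: boundary condition and the second variation.}
Two geometric facts make the boundary term of the stability form vanish on $\varphi_a$. First, since $\partial\bb$ is totally umbilical and $M\perp\partial\bb$, the second fundamental form $A$ of $M$ satisfies $A(\bar\nu,Z)=0$ for every $Z\in T\partial M$. Second, $X_a$ is tangent to $\partial\bb$ with $\langle X_a,\bar\nu\rangle=0$ on $\partial M$. A short computation with the conformal Killing equation then gives $\partial_{\bar\nu}\varphi_a=\mathrm{II}^{\partial\bb}(\nu,\nu)\,\varphi_a=\varphi_a$ on $\partial M$, so the boundary integral drops out and $Q(\varphi_a,\varphi_a)=-\int_M\varphi_a\,L\varphi_a\,dA$ with $L=\Delta_M+|A|^2$ the Jacobi operator. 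Using $L\langle a,x\rangle=H\langle a,\nu\rangle+|A|^2\langle a,x\rangle$ together with the conformal-field identity $L\langle X_a,\nu\rangle=-H\psi_a-n\langle\bar\nabla\psi_a,\nu\rangle=-H\langle a,x\rangle-n\langle a,\nu\rangle$ (valid because $H$ is constant), the $\langle a,\nu\rangle$-terms cancel and one obtains the clean expression $L\varphi_a=n\,|\mathring A|^2\,\langle a,x\rangle$, where $\mathring A=A-\tfrac Hn g$ is the traceless second fundamental form. Summing over an orthonormal basis $a=e_1,\dots,e_{n+1}$, using $\sum_\alpha\langle e_\alpha,x\rangle^2=|x|^2$ and $\sum_\alpha\langle X_{e_\alpha},\nu\rangle\langle e_\alpha,x\rangle=\tfrac12(|x|^2-1)\langle x,\nu\rangle$, stability becomes
\begin{equation*}
0\le\sum_{\alpha}Q(\varphi_{e_\alpha},\varphi_{e_\alpha})=-\,n\int_M|\mathring A|^2\Bigl(n|x|^2-\tfrac H2(1-|x|^2)\langle x,\nu\rangle\Bigr)\,dA .
\end{equation*}

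\textbf{Step 3: rigidity, and the main obstacle.}
The right-hand side above is not yet visibly $\le 0$, since the weight mixes the non-negative term $n|x|^2$ with an indefinite one; this is exactly where the work lies. I would remove the obstruction by combining the displayed inequality with the stability inequality applied to the further admissible function $\langle x,\nu\rangle-\overline{\langle x,\nu\rangle}$, reducing its second variation — via the higher Minkowski identity $\int_M|A|^2\langle x,\nu\rangle\,dA=\int_{\partial M}A(\bar\nu,\bar\nu)\,ds-H|M|$ and \eqref{eq0.2} — again to an expression controlled by $\int_M|\mathring A|^2$, and with the Jacobi test functions $\langle R_{\alpha\beta},\nu\rangle$, so that the combined inequality reads $\int_M|\mathring A|^2\cdot(\text{a weight of definite sign})\,dA\le 0$. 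This forces $\mathring A\equiv 0$ on $M$, so $M$ is totally umbilical, hence — being CMC with free boundary in the unit ball — a totally geodesic ball or a spherical cap. I expect the hard part to be precisely the bookkeeping in this last step: checking that the weights produced by the various admissible test functions and the several Minkowski-type identities (needed to eliminate the curvature-free interior and boundary contributions) combine with a definite sign, with all mean-curvature conventions and boundary terms tracked correctly. Secondary technical points are: proving every identity pointwise so the argument survives for merely \emph{immersed} $M$; the degenerate minimal case (handled by Ros--Vergasta); and, for the full theorem of the abstract, redoing Steps 1--2 for arbitrary contact angle $\theta$ with $\cos\theta$-modified conformal fields and for balls in $\ss^{n+1}$ and $\hh^{n+1}$, where $x$ is replaced by the appropriate conformal Killing field of the space form.
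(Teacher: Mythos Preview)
Your Steps~1 and~2 are exactly the paper's: the conformal Killing field $X_a$, the weighted Minkowski formula $\int_M n\langle x,a\rangle\,dA=\int_M H\langle X_a,\nu\rangle\,dA$, the test function $\varphi_a$, the Robin boundary identity $\partial_\mu\varphi_a=q\,\varphi_a$, and the Jacobi computation $L\varphi_a=(n|h|^2-H^2)\langle x,a\rangle$ are all there (modulo normal-sign conventions; your sign on the $H$-term in $L\langle X_a,\nu\rangle$ is off, but that is bookkeeping). Summing over $a$ you get precisely the paper's inequality
\[
\int_M\Bigl(n|x|^2-\tfrac12(|x|^2-1)H\langle x,\nu\rangle\Bigr)(n|h|^2-H^2)\,dA\le 0.
\]

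Step~3 is where your proposal has a genuine gap. You correctly diagnose that the weight is indefinite, but your plan to fix it---throw in further stability test functions such as $\langle x,\nu\rangle-\overline{\langle x,\nu\rangle}$ and the rotational Jacobi fields, combine with higher Minkowski identities, and hope the weights align---is speculative and you do not carry it out. The paper does \emph{not} proceed this way. Instead it introduces an auxiliary function (not a stability test function)
\[
\Phi:=\tfrac12(|x|^2-1)H-n\langle x,\nu\rangle,
\]
which satisfies $\Delta\Phi=(n|h|^2-H^2)\langle x,\nu\rangle$ and $\Phi|_{\partial M}=0$, so that $\int_M\Delta(\tfrac12\Phi^2)\,dA=0$. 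Adding this harmless zero to the displayed inequality and expanding $\Delta(\tfrac12\Phi^2)=\Phi\Delta\Phi+|\nabla\Phi|^2$ converts the indefinite weight into
\[
n|x|^2-\tfrac12(|x|^2-1)H\langle x,\nu\rangle+\Phi\langle x,\nu\rangle=n|x|^2-n\langle x,\nu\rangle^2=n|x^T|^2\ge 0,
\]
plus the nonnegative term $|\nabla\Phi|^2$. One then reads off $|x^T|^2(n|h|^2-H^2)=0$ and $\nabla\Phi=0$; the latter feeds back through $\Delta\Phi=0$ to give $\langle x,\nu\rangle(n|h|^2-H^2)=0$, whence $|x|^2(n|h|^2-H^2)=0$ and $M$ is umbilical. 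This algebraic trick---completing the weight to a perfect square by adding the Laplacian of a function that vanishes on $\partial M$---is the missing idea in your Step~3.

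A minor point: you dispose of $H=0$ by quoting Ros--Vergasta, but there is no need. With $H=0$ the weight is already $n|x|^2\ge 0$ and the conclusion is immediate from the same inequality; the paper notes this gives a new proof of the Ros--Vergasta minimal case.
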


One of crucial ingredients to prove this result is a new  Minkowski type formula. For an immersion $x: M\to \bar \bb^{n+1}$ with free boundary, we establish
a  weighted Minkowski formula 
\begin{equation}
 \label{eq0.3}
 n\int_M V_a dA  =\int _M H \<X_a, \nu\> d A,
\end{equation}
which is one of a family of Minkowski's formulae proved in Section 3. Here  $a\in \R^{n+1}$ is any constant vector field, $V_a$ and  $X_a$  are defined by
\begin{equation*}
 V_a:=\<x,a\>, \qquad 
 X_a:= \<x, a\>x-\frac 12 (1+|x|^2)a.
\end{equation*}
The key feature of $X_a$ is its conformal Killing property. For the details about $V_a$ and $X_a$  see  Section \ref{sec3} below.
Different to \eqref{eq0.2},
this new Minkowski formula \eqref{eq0.3} gives a relation between  two (weighted) geometric quantities. More important is that there is no boundary integral in this 
 new Minkowski formula. It is clear to see from \eqref{eq0.3} that $nV_a-H\<X_a, \nu\>$ is an admissible test function for the stability for any $a\in \R^{n+1}$.
These admissible functions play an essential role in the proof of  Theorem \ref{thm0.1}.
 
\medskip
 
 It is interesting that our proof works for stable CMC hypersurfaces with free boundary in $\bar \bb^{n+1}$ with a singular set of sufficiently low Hausdorff  dimension
 and therefore gives a proof of a conjecture proposed by
 Sternberg-Zumbrun (\cite{SZ1} p. 77). As an application of their stability formula (Theorem 2.2 in \cite{SZ1}), which they called Poincar\'e inequality for stable
 hypersurfaces with a singular set with Hausdorff measure $\mathcal{H}^{n-2}=0$, 
 they proved in \cite{SZ1} (Theorem 3.5) that any local minimizer of perimeter under the volume constraint in $\bar \bb^{n+1}$ is either 
 a totally geodesic ball or a regular graph over $\p \bb^{n+1}$, 
 provided that $H=0$ or 
 \begin{equation}\label{eq0.5} \int_M \< x, \nu  \> d\mathcal{H}^{n} <0.
   \end{equation}
Condition \eqref{eq0.5} is equivalent to $n\mathcal{H}^{n}(M)<\mathcal{H}^{n-1}(M\cap \p\bb^{n+1})$. 
This condition is almost the same as that in one of results of Ros-Vergasta (Theorem 8 in \cite{RV}). 
They conjectured that \eqref{eq0.5} holds always for 
stable hypersurfaces with boundary
and all local minimizers  in $\bar \bb^{n+1}$ are regular. 
Here we prove more, namely all local minimizers  in $\bar \bb^{n+1}$ are totally geodesic balls or spherical caps. 
 
 \begin{theorem}\footnote{In view of this result Sternberg-Zumbrun asked in their new  paper \cite{SZ3}, whether volume-constrained local minimizers in a convex domain 
remain regular in arbitrary dimension $n$ and not just for $n\le 7$. For a further discussion, see \cite{SZ3}. }
  \label{thm0.3}
  Let $\O$ be a local minimizer of perimeter with respect to fixed volume in $\bar \bb^{n+1}$. Then $M=\overline{\p \Omega \cap \bb^{n+1}}$ is  
  the intersection of 
  $\bar \bb^{n+1}$ with either a plane through the origin or a sphere.
  
 \end{theorem}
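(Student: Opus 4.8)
The plan is to reduce the statement about volume-constrained local minimizers to Theorem \ref{thm0.1} (the classification of stable immersed CMC hypersurfaces with free boundary in a Euclidean ball), by first establishing the regularity of $M$. A local minimizer $\Omega$ of perimeter under a volume constraint is in particular an almost minimizer, so by the standard regularity theory for sets of finite perimeter (De Giorgi, plus the boundary regularity at $\p\bb^{n+1}$ due to Gr\"uter, Gr\"uter--Jost and the interior theory), the reduced boundary $M = \overline{\p\Omega \cap \bb^{n+1}}$ is a smooth embedded hypersurface away from a closed singular set $\Sigma$ with $\mathcal{H}^{s}(\Sigma) = 0$ for every $s > n - 7$; in particular $\mathcal{H}^{n-2}(\Sigma) = 0$ as soon as $n \le 7$, and in general $\Sigma$ has the low Hausdorff dimension needed to run the argument. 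On the regular part, $M$ has constant mean curvature $H$ (the Euler--Lagrange equation of the constrained problem) and meets $\p\bb^{n+1}$ orthogonally along $\p M$, since the domain has no wetting energy term — i.e. $M$ is a free boundary CMC hypersurface in the measure-theoretic sense.

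Next I would invoke the stability: a volume-constrained local minimizer is stable, and the second variation inequality holds for all admissible variations, including Lipschitz ones supported near the singular set. This is precisely the content of the Sternberg--Zumbrun stability inequality (their Poincar\'e inequality, Theorem 2.2 in \cite{SZ1}), valid because $\mathcal{H}^{n-2}(\Sigma) = 0$. The point of our approach is that the weighted Minkowski formula \eqref{eq0.3} produces admissible test functions $n V_a - H\langle X_a, \nu\rangle$ for each $a \in \R^{n+1}$ without generating any boundary integral, and — crucially — the functions $V_a$ and $X_a$ extend smoothly across $\Sigma$, so plugging them into the (generalized) stability inequality is legitimate despite the possible singularities. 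Thus the entire argument used to prove Theorem \ref{thm0.1} goes through verbatim in this weaker regularity setting, and we conclude that the regular part of $M$ is totally umbilical, hence (being a free boundary CMC hypersurface in $\bar\bb^{n+1}$) a piece of a plane through the origin or a sphere.

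Finally, a smooth totally umbilical hypersurface meeting $\p\bb^{n+1}$ orthogonally has no singular set — it is a totally geodesic equatorial disk or a spherical cap — so in fact $\Sigma = \emptyset$, $\Omega$ is one of the model domains, and $M = \overline{\p\Omega \cap \bb^{n+1}}$ is the intersection of $\bar\bb^{n+1}$ with a plane through the origin or a sphere. I expect the main obstacle to be technical rather than conceptual: verifying carefully that the generalized stability inequality of \cite{SZ1} can be applied to our specific test functions across the singular set — that is, checking the required Hausdorff-dimension bound on $\Sigma$ and the admissibility (zero average, appropriate Sobolev regularity) of $n V_a - H\langle X_a, \nu\rangle$ on the possibly non-smooth $M$ — together with confirming that the free boundary condition in the measure-theoretic sense is strong enough to run the boundary integrations by parts that underlie \eqref{eq0.3}.
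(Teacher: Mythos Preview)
Your proposal is correct and follows essentially the same approach as the paper: the paper remarks only that ``the same proof works for singular hypersurfaces, hence we have Theorem~\ref{thm0.3},'' relying on the Sternberg--Zumbrun stability inequality for minimizers with singular set of $\mathcal{H}^{n-2}$-measure zero and the fact that the test functions $nV_a-H\langle X_a,\nu\rangle$ make sense across the singular set. One small slip: since the singular set has Hausdorff dimension at most $n-7$, the condition $\mathcal{H}^{n-2}(\Sigma)=0$ holds for \emph{all} $n$, not just $n\le 7$.
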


 We remark that, E. Barbosa \cite{Barbosa}  also gave a proof of the  conjecture proposed by Sternberg-Zumbrun \cite{SZ1} about the regularity of the local minimizers in $\bar \bb^{n+1}$.
 
 The minimal or CMC hypersurfaces with free boundary  attract much attention of many mathematicians. In 80's there are many  existence results
 obtained from geometric variational methods, see for example, 
 \cite{Struwe, GJ1, Jost, Wang, BK}. The corresponding regularity problem has been studied by Gr\"uter-Jost \cite{GJ2}. 
 Recently one of inspiring work is a series of papers of Fraser-Schoen
 \cite{FS1, FS2, FS3}
 about minimal hypersurfaces with free boundary in a  ball and the first Steklov eigenvalue. See also \cite{Brendle, ChenJ, Volkmann, Lm, Pacard, Freidin, Ambrozio}.
 Our research  on the stability on CMC hypersurfaces are motivated by these results.
 
There are many interesting  properties of closed surfaces in a space form  that  are valid also for surfaces with free boundary. However, in many cases the proof for the case of surfaces with free boundary is quite different and becomes more difficult, while in other cases the  counterpart for surfaces with free boundary is still open. It means that the free boundary problems for surfaces
 are in general more difficult.
 Here we just mention several good examples. Comparing to the result of Montiel-Ros \cite{MR}: {\it Any minimal torus immersed in ${\mathbb S}^3$ by the first eigenfunctions is the Clifford torus}, Fraser-Schoen \cite{FS2} took much more effort to obtain: {\it any minimal annulus with free boundary, which is immersed by the first Steklov eigenvalue, is the critical catenoid.} 
 While the Lawson conjecture about uniqueness of embedded torus in $\mathbb S^3$ was solved recently by Brendle \cite{Brendle_L} with a clever use of the maximum principle on a two-point function, the free boundary version of the Lawson conjecture is still open. See \cite{FL} and also \cite{N}, where it was claimed without providing a proof. Even if any minimal surfaces with free boundary with index 4 is the critical catenoid is also open.

\medskip

Let us turn to the general case, the capillary hypersurfaces in a ball (in space forms). There are only partial results. 
 See for example the work of Ros-Souam \cite{RS} mentioned already above, and also \cite{Souam, Marinov0, LiXiong}.
Our approach to prove Theorem \ref{thm0.1} is powerful enough to work for immersed capillary hypersurfaces in a ball in any space forms after establishing appropriate weighted Minkowski formulae, see Propositions \ref{MinkEucl} and \ref{Minksf}. 
In other words, we can give a complete affirmative answer to the open problem mentioned above.  
 \begin{theorem}\label{thm0.2}
 Any stable immersed capillary hypersurface  in a ball in space forms is totally umbilical.
\end{theorem}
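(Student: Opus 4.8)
The plan is to run a Barbosa--do Carmo type test-function argument \cite{BDC, BDCE} adapted to the capillary setting, with the weighted Minkowski formulae of Section~\ref{sec3} (Propositions~\ref{MinkEucl} and \ref{Minksf}) playing the role that the classical formula \eqref{eq0.1} plays for closed hypersurfaces. Fix an ambient model for the space form, so that $a$ ranges over constant vectors: in $\R^{n+1}$ when the ambient space form is $\R^{n+1}$, and in $\R^{n+2}$ with the Euclidean or Lorentzian metric when it is $\ss^{n+1}$ or $\hh^{n+1}$; for each such $a$ let $V_a$, $X_a$ be the associated function and vector field from Section~\ref{sec3}. The essential facts are that $X_a$ is conformal Killing with potential $V_a$ and that $X_a$ is tangent to $\p B$ along $\p B$. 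The relevant weighted Minkowski formula, $n\int_M V_a\,dA=\int_M H\langle X_a,\nu\rangle\,dA$, says precisely that
\[
\varphi_a:=nV_a-H\langle X_a,\nu\rangle
\]
has zero average over $M$, hence is an \emph{admissible} variation for the stability inequality of Ros--Souam \cite{RS}, for every $a$. The gain over the Ros--Vergasta formula \eqref{eq0.2} is the absence of a boundary term, which is what lets us dispense with any Hersch-type balancing and work directly with the full $(n+1)$-parameter (resp.\ $(n+2)$-parameter) family $\{\varphi_a\}$.

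First I would record the differential identities for $V_a$ and $f_a:=\langle X_a,\nu\rangle$ along $M$: from the conformal Killing equation together with the Gauss and Codazzi equations one obtains $\nabla V_a$ in terms of the tangential part of $X_a$, a clean formula for $\Delta V_a$, and a Simons-type identity
\[
\Delta f_a+\bigl(|A|^2+\overline{\mathrm{Ric}}(\nu,\nu)\bigr)f_a=(\text{terms in }\nabla H\text{ and }V_a),
\]
which for constant $H$ collapses to something explicit. I would also compute the boundary value of $\varphi_a$ and its outward conormal derivative along $\p M$ using the constant contact angle $\theta$, the umbilicity of the geodesic sphere $\p B$ in the space form, and the tangency of $X_a$ to $\p B$; this is what gets matched against the Robin boundary term $\int_{\p M} q\,\varphi_a^2\,ds$ of the stability form, $q$ being determined by $\theta$ and the second fundamental form of $\p B$.

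Then I would insert $\varphi=\varphi_a$ into
\[
Q(\varphi,\varphi)=\int_M\bigl(|\nabla\varphi|^2-(|A|^2+\overline{\mathrm{Ric}}(\nu,\nu))\varphi^2\bigr)\,dA-\int_{\p M}q\,\varphi^2\,ds\ \ge\ 0,
\]
integrate by parts using the identities above, and sum over an orthonormal basis $a=E_1,E_2,\dots$ of the ambient space. The sums $\sum_\alpha V_{E_\alpha}^2$, $\sum_\alpha|\nabla V_{E_\alpha}|^2$, $\sum_\alpha f_{E_\alpha}^2$, $\sum_\alpha V_{E_\alpha}f_{E_\alpha}$ and their companions collapse to explicit pointwise functions of $x$ and $\nu$ (in the Euclidean ball, for instance, $\sum_\alpha V_{E_\alpha}^2=|x|^2$ and $\sum_\alpha f_{E_\alpha}^2=\tfrac14(1+|x|^2)^2-\langle x,\nu\rangle^2$). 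Using the constant mean curvature equation and the weighted Minkowski formula once more to cancel the cross terms, the summed inequality should reduce to
\[
0\ \le\ \sum_\alpha Q(\varphi_{E_\alpha},\varphi_{E_\alpha})\ =\ -\int_M W\,\Bigl|A-\tfrac{H}{n}\,g\Bigr|^2\,dA,
\]
with $W\ge 0$ on $M$ and $W>0$ in the interior, the boundary contributions cancelling identically by the contact-angle computation. Hence $A\equiv\tfrac{H}{n}\,g$, i.e.\ $M$ is totally umbilical.

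The step I expect to be the main obstacle is exactly this boundary analysis. Unlike the closed case \cite{BDC}, the $\varphi_a$ do not vanish on $\p M$, and in the genuinely capillary regime $\theta\neq\frac{\pi}{2}$ the Robin coefficient $q$ is nontrivial; showing that, after summing over the basis, the terms $\int_{\p M}q\,\varphi_a^2\,ds$ exactly cancel the boundary integrals produced by the integration by parts is the delicate point, and it is here that the precise normalization $X_a=V_a\,x-\tfrac12(1+|x|^2)a$ (which forces $X_a$ tangent to $\p B$) and the conformal Killing property are used in an essential way. A secondary, mostly bookkeeping, issue is to keep the computation uniform over $\R^{n+1}$, $\ss^{n+1}$ and $\hh^{n+1}$; this is absorbed into Propositions~\ref{MinkEucl}--\ref{Minksf}, whose $V_a,X_a$ already carry the right geometry, so the algebra above goes through with only notational changes. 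Finally, the Euclidean free boundary case is the specialization $\theta=\frac{\pi}{2}$ (giving another proof of Theorem~\ref{thm0.1}), and Theorem~\ref{thm0.3} follows by the same argument after checking that the $\varphi_a$ remain admissible across a singular set of Hausdorff codimension at least two, via the Sternberg--Zumbrun Poincaré inequality \cite{SZ1}.
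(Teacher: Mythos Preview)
Your overall strategy is the right one and matches the paper's, but there are two concrete gaps.

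First, for a genuine contact angle $\theta\neq\frac{\pi}{2}$ your test function is incomplete. The weighted Minkowski formula in the capillary case is not $n\int_M V_a\,dA=\int_M H\langle X_a,\nu\rangle\,dA$ but rather (Proposition~\ref{MinkEucl})
\[
n\int_M\langle x+\cos\theta\,\nu,\,a\rangle\,dA=\int_M H\langle X_a,\nu\rangle\,dA,
\]
so the admissible function is $\varphi_a=n\langle x+\cos\theta\,\nu,a\rangle-H\langle X_a,\nu\rangle$; in $\hh^{n+1}$ and $\ss^{n+1}$ the extra piece is $n\sinh R\cos\theta\,\bar g(Y_a,\nu)$ (resp.\ its spherical analogue), built from a Killing field $Y_a$ rather than a constant vector. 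With this correction the boundary analysis you flag as the ``main obstacle'' actually disappears: one checks directly (Proposition~\ref{prop3.3}, Proposition~\ref{bdrylemma}) that $\nabla_\mu\varphi_a=q\,\varphi_a$ holds \emph{pointwise along $\partial M$ for every $a$}, so the Robin boundary term in the stability form vanishes for each $\varphi_a$ individually---no summation over a basis is needed for this step.

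Second, and more seriously, the summed interior inequality does \emph{not} come out in the form $0\le -\int_M W\,|\mathring A|^2$ with $W\ge 0$. What one obtains after summing over $a=E_1,\dots,E_{n+1}$ is
\[
\int_M\Bigl(n|x|^2+n\cos\theta\,\langle x,\nu\rangle-\tfrac12(|x|^2-1)H\langle x,\nu\rangle\Bigr)(n|h|^2-H^2)\,dA\le 0,
\]
and the weight in front of $(n|h|^2-H^2)$ has no definite sign when $H\neq 0$ or $\theta\neq\frac{\pi}{2}$. The missing idea is an auxiliary function
\[
\Phi=\tfrac12(|x|^2-1)H-n(\langle x,\nu\rangle+\cos\theta),
\]
which vanishes on $\partial M$ and satisfies $\Delta\Phi=(n|h|^2-H^2)\langle x,\nu\rangle$. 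Adding the identity $\int_M\Delta\tfrac12\Phi^2\,dA=0$ to the summed inequality converts it into
\[
0\ge\int_M n|x^T|^2(n|h|^2-H^2)+|\nabla\Phi|^2\,dA\ge 0,
\]
from which $|x^T|^2(n|h|^2-H^2)=0$ and $\nabla\Phi=0$; combining these (via $\Delta\Phi=0$) yields $n|h|^2-H^2\equiv 0$. The same $\Phi$-trick, with $V_0-\cosh R$ replacing $\tfrac12(|x|^2-1)$, is what makes the argument go through in the other space forms. Without it your proposed proof stalls at the sign of $W$.
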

For this theorem, though the ideas of proof are essentially the same as the one for Theorem \ref{thm0.1}, the proof becomes more involving.

By going through the proof, we see that our approach also works for closed hypersurfaces. Namely, we provide a new proof of the uniqueness results of Barbosa-do Carmo and Barbosa-do Carmo-Eschenburg mentioned above, see Remark 3.2 and Remark 3.3 below. Furthermore, our approach works for the corresponding exterior problem.  To be precise, we are able to prove the following
 \begin{theorem}\label{thm0.4}
Any compact stable immersed capillary hypersurface  outside a ball in space forms is totally umbilical.
\end{theorem}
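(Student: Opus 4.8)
The plan is to re-run the argument that proves Theorem \ref{thm0.2} almost verbatim; the only genuinely new ingredient is the exterior analogue of the weighted Minkowski formula, the rest being a careful bookkeeping of signs. Throughout, normalize the space form and the geodesic ball $B$ so that the conformal Killing field $X_a$ of Section \ref{sec3} and of Propositions \ref{MinkEucl} and \ref{Minksf} is available, and recall its two key features: its conformal factor is $V_a$, and it is tangent to $\partial B$. Tangency to the geodesic sphere $\partial B$ is a property of the sphere itself and is insensitive to which side the hypersurface lies on, so $X_a$ restricts to $\partial B$ in exactly the same way whether $M$ is inside or outside $B$.

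First I would establish the exterior weighted Minkowski formula. For $M\subset \bar M\setminus B$ compact and capillary one has, as in Section \ref{sec3}, the pointwise identity $\div_M X_a^T = nV_a-H\langle X_a,\nu\rangle$; integrating over $M$ and using the divergence theorem gives $\int_M\bigl(nV_a-H\langle X_a,\nu\rangle\bigr)\,dA=\int_{\partial M}\langle X_a,\bar\mu\rangle\,ds$, where $\bar\mu$ is the outward unit conormal of $M$ along $\partial M$. Since $M$ now lies outside $B$, $\bar\mu$ points toward $B$, i.e.\ with the opposite orientation to the interior case; but the component of $X_a$ normal to $\partial B$ still vanishes along $\partial M$, and the capillary angle condition evaluates the remaining tangential contribution exactly as in the interior case, after the same $\cos\theta$-modification of $X_a$ used in Propositions \ref{MinkEucl} and \ref{Minksf}. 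The upshot is the same boundary-term-free identity, so that $\varphi_a:=nV_a-H\langle X_a,\nu\rangle$ has vanishing mean value over $M$ and is admissible in the capillary stability quadratic form $Q$; compactness of $M$ guarantees convergence of all these integrals, and when $H=0$ the test function is just $nV_a$, which is still admissible, so the minimal case needs no separate treatment.

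Next, for each $a$ in an orthonormal basis $\{e_1,\dots,e_{n+1}\}$ of the ambient model space I would substitute $\varphi_a$ into $Q(\cdot,\cdot)$ and sum over $a$. Using the conformal Killing equation for $X_a$, the constancy of $H$, and the umbilicity of $\partial B$, the bulk integrand $\sum_a\bigl(|\nabla\varphi_a|^2-(|A|^2+\overline{\mathrm{Ric}}(\nu,\nu))\varphi_a^2\bigr)$ collapses — just as in Barbosa--do Carmo--Eschenburg \cite{BDCE} and in the proof of Theorems \ref{thm0.1} and \ref{thm0.2} — to an expression proportional to $|\mathring A|^2$, up to a positive weight, with a favorable sign, while the boundary terms produced by integration by parts combine with the boundary term of $Q$. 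Since $Q(\varphi_a,\varphi_a)\ge 0$ by stability, summation yields $0\le \sum_aQ(\varphi_a,\varphi_a)\le -c\int_M w\,|\mathring A|^2\,dA$ for some positive constant $c$ and positive weight $w$, forcing $\mathring A\equiv0$; hence $M$ is totally umbilical, and inspecting the equality case identifies it as a totally geodesic piece or an umbilical cap meeting $\partial B$ at the angle $\theta$.

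The step I expect to be the main obstacle is the boundary sign bookkeeping. For the exterior domain $\bar M\setminus B$ the outward normal along $\partial B$ points into $B$, which reverses the sign of the second fundamental form of $\partial B$ that enters $Q$; one must check that, after combining this with the integration-by-parts boundary terms coming from $\varphi_a$ and with the $\cos\theta$ corrections built into the weighted Minkowski formula, the combined boundary contribution still has the sign needed to keep $\sum_aQ(\varphi_a,\varphi_a)\le 0$. Two lesser points should also be addressed: in $\mathbb{S}^{n+1}$ the exterior of a geodesic ball is again a geodesic ball, so the spherical case is consistent with, and essentially reduces to, Theorem \ref{thm0.2} with angle $\pi-\theta$; whereas in $\mathbb{R}^{n+1}$ and $\mathbb{H}^{n+1}$ the exterior is noncompact and one must confirm that compactness of $M$ together with the capillary condition legitimizes all the integrations by parts and, where relevant, the Hersch-type balancing used to normalize the test functions in the $\mathbb{H}^{n+1}$ model.
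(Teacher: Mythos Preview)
Your outline misses the step that actually makes the interior proof (and hence the exterior one) go through. After inserting the $\varphi_a$ into the stability inequality and summing over an orthonormal basis you do \emph{not} get $|\mathring A|^2$ against a positive weight: look at \eqref{stab-eq3} and the sentence after \eqref{add_eq}, where the paper explicitly says the resulting coefficient $\gamma$ ``may have no definite sign.'' What saves the argument is the auxiliary function $\Phi$ (equations \eqref{stab-eq4}--\eqref{eq-Phi} in the Euclidean case, and \eqref{stab4'} in the hyperbolic case): one has $\Delta\Phi=(n|h|^2-H^2)\bar g(x,\nu)$ and $\Phi|_{\partial M}=0$, so $\int_M\Delta\tfrac12\Phi^2=0$, and \emph{adding} this identity to the summed stability inequality converts the indefinite integrand into $n\,\bar g(x^T,x^T)(n|h|^2-H^2)+|\nabla\Phi|^2\ge 0$. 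Without this device your claimed inequality $\sum_a Q(\varphi_a,\varphi_a)\le -c\int_M w\,|\mathring A|^2$ is simply false in general. The exterior proof in the paper is literally ``the same as the interior problem'' once the test function and $q$ are adjusted, so the $\Phi$ step is indispensable there too.

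Two smaller corrections. First, your test function $\varphi_a=nV_a-H\langle X_a,\nu\rangle$ is incomplete for $\theta\neq\pi/2$: one needs the extra $\cos\theta$ term, e.g.\ $n\langle x+\cos\theta\,\nu,a\rangle-H\langle X_a,\nu\rangle$ in $\mathbb{R}^{n+1}$ and $n(V_a\pm\sinh R\cos\theta\,\bar g(Y_a,\nu))-H\bar g(X_a,\nu)$ in $\mathbb{H}^{n+1}$. It is precisely the sign of this $\cos\theta$ term (and the first term of $q$) that flips for the exterior problem, because now $x=-\sinh R\,\bar N$ along $\partial M$; this is the only real exterior modification. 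Second, you worry about the boundary contribution having a favorable sign, but in fact Propositions \ref{prop3.3} and \ref{bdrylemma} give $\nabla_\mu\varphi_a=q\varphi_a$ on the nose, so the boundary term in the stability form vanishes \emph{identically}; there is no residual boundary sign to manage, and no Hersch-type balancing is used anywhere in the argument.
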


\begin{remark}
From the proof we can easily see that we {\bf do not} need  the immersed hypersurface is contained {\em in} or {\em outside }a ball, but only need the assumption $x(\p M)\subset \p B$.
\end{remark}
 
There are many interesting uniqueness results on stable capillary hypersurfaces within other types of domains, e.g., 
 a wedge, a slab, a cone, a cylinder or  a half space, see e.g.
 \cite{AS, ChoeKoiso, Park, Morabito, Marinov, LiXiong2, Lopez, Lopez2, RR, Rosales, Vogel}.

 
\medskip
 

There are other important uniqueness results concerning capillary hypersurfaces. One is Hopf type theorem which says any CMC {\it $2$-sphere} in $\rr^3$ is a round sphere. Nitsche \cite{N} proved that any disk type capillary surface in $\bb^3$ is either a totally geodesic disk or a spherical cap by using Hopf type argument, see also Fraser-Schoen \cite{FS3} for recent development.
Another is Alexandrov type theorem which says that any {\it embedded} CMC closed hypersurface is a round sphere. For capillary hypersurfaces, if it is embedded with its boundary
$\p M$ lying in a half sphere, then Ros-Souam \cite{RS} (Proposition 1.2) showed that it is either a totally geodesic ball or a spherical 
cap by Alexandrov's reflection method.  
 
In the last section we will give a new proof of the Alexandrov type theorem  \cite{RS} for CMC hypersurface with free boundary by using integral method in the spirit as Reilly \cite{Reilly} and Ros \cite{Ros}. The key ingredients are the new Minkowski formula as well as a Heintze-Karcher-Ros type inequality we will establish.
This is another objective of this paper.
For such an inequality we also use the weight function $V_a$.


The Heintze-Karcher-Ros inequality for an embedded closed  hypersurface $\S$ of positive mean curvature in $\R^{n+1}$ is 
\begin{eqnarray}\label{HKR0}
\int_{\S} \frac{1}{H} dA\ge \frac{n+1}{n}  \int_\O  d\O,
\end{eqnarray}
 where $\O$ is the enclosed body by $\S$. Equality in \eqref{HKR0} holds if and only if $\Sigma$ is a round sphere.
\eqref{HKR0} is a sharp inequality for hypersurfaces of
positive mean curvature inspired by a classical inequality of Heintze-Karcher \cite{HK}.
In 1987, Ros \cite{Ros} provided a proof of the above inequality by using a remarkable
Reilly formula (see \cite{Reilly}), and applied it to show Alexandrov's rigidity theorem for
high order mean curvatures. Recently, Brendle \cite{Brendle2} established  such an inequality 
in a large class of warped product spaces, including the space forms and the (Anti-de Sitter-)Schwarzschild
manifold. A geometric flow method, which is quite different from Ros' proof, was
used by Brendle. Motivated by Brendle's work, new Reilly type formulae have been
established by the second named author and his collaborators
in \cite{QX, LX, LX2}. These formulae will be used to establish the following Heintze-Karcher-Ros inequality: 

{\it for an embedded hypersurface $\S$ lying in a half ball $B_+$ in any space forms with its boundary $\p \S\subset \p B_+$, there holds
\begin{eqnarray}\label{HKR1}
\int_{\S} \frac{V_a}{H} dA\ge \frac{n+1}{n}  \int_\O V_a d\O,
\end{eqnarray}
where $\O$ is the enclosed body by $\S$ and $\p B_+$. Equality in \eqref{HKR1} holds if and only if $\S$ is totally umbilical and intersects $\p B_+$ orthogonally.}

\noindent See Theorem \ref{HK} below. 
The Alexandrov rigidity theorem for embedded CMC hypersurfaces with free boundary follows from this inequality and the Minkowski formula \eqref{eq0.3}. 
We believe that there is a sharp version of Heintze-Karcher-Ros type inequality for hypersurfaces in a ball, whose equality case is achieved by capillary hypersurfaces with  a fixed contact angle $\theta\in (0, \pi)$.
\medskip

The remaining part of this paper is organized as follows. In Section \ref{sec2} we review the definition and basic properties of capillary hypersurfaces. Since we are concerned with the immersions,
a suitable notion of volume and the so-called wetting area  functional is needed to study capillary hypersurfaces. 
In Section \ref{sec3} we give a proof of Theorem \ref{thm0.2}  for capillary hypersurfaces in a ball in $\R^{n+1}$ after  establishing the Minkowski formula \eqref{Mink1}.
Theorem \ref{thm0.1} is a special case of Theorem \ref{thm0.2}. The same proof works for singular hypersurfaces, hence we have Theorem \ref{thm0.3}.
In Section \ref{sec4}  we provide a detailed proof of  Theorem \ref{thm0.2}  for capillary hypersurfaces in a ball in ${\mathbb H}^{n+1}$ and sketch  
a proof for capillary hypersurfaces in a ball in ${\mathbb S}^{n+1}$. For the corresponding exterior problem,  we sketch its proof at the end of Section \ref{sec4}.
In Section \ref{sec5}, we prove the Heintze-Karcher-Ros type inequality and  the Alexandrov theorem for hypersurfaces in a ball with free boundary.

\medskip

\section{Preliminaries on Capillary Hypersurfaces}
\label{sec2}

Let $(\bar M^{n+1}, \bar g)$ be an oriented $(n+1)$-dimensional Riemannian manifold and $B$ be a smooth 
compact domain in $\bar M$ that is diffeomorphic to a Euclidean ball. Let $x: (M^n, g)\to B$ be an  
isometric immersion of an orientable $n$-dimensional compact manifold $M$ with boundary $\p M$ into $B$  that maps ${\rm int} M$ into ${\rm int} B$ and $\p M$ into $\p B$.

We denote by $\bar \n$, $\bar \Delta$ and $\bar \n^2$ the gradient, the Laplacian and the Hessian on $\bar M$ respectively, while by $\n$, $\Delta$ and $\n^2$ the gradient, the Laplacian and the Hessian on $M$ respectively. 
We will use the following terminology for four normal vector fields.
We choose one of the unit normal vector field along $x$ and denote it by $\nu$. 
We denote  by $\bar N$ the unit outward normal to $\p B$ in $B$ and $\mu$ be the unit outward normal to $\p M$ in $M$. 
Let $\bar \nu$ be the unit normal to $\p M$ in $\p B$ such that the bases $\{\nu, \mu\}$ and $\{\bar \nu, \bar N\}$ have the same orientation in the
normal bundle of $\p M\subset \bar M$.
See Figure 1. 
Denote by $h$ and $H$ the second fundamental form and the mean curvature of the immersion $x$ respectively. Precisely,
$h(X, Y)= \bar g(\bar \n_X \nu, Y)$ and $H=\tr_g(h).$
For constant mean curvature hypersurfaces which are our main concern, we always choose $\nu$ to be one of the unit normal vector fields so that $H\ge 0$.
\begin{figure} 
\vspace{8mm}
\includegraphics[width=0.6\linewidth]{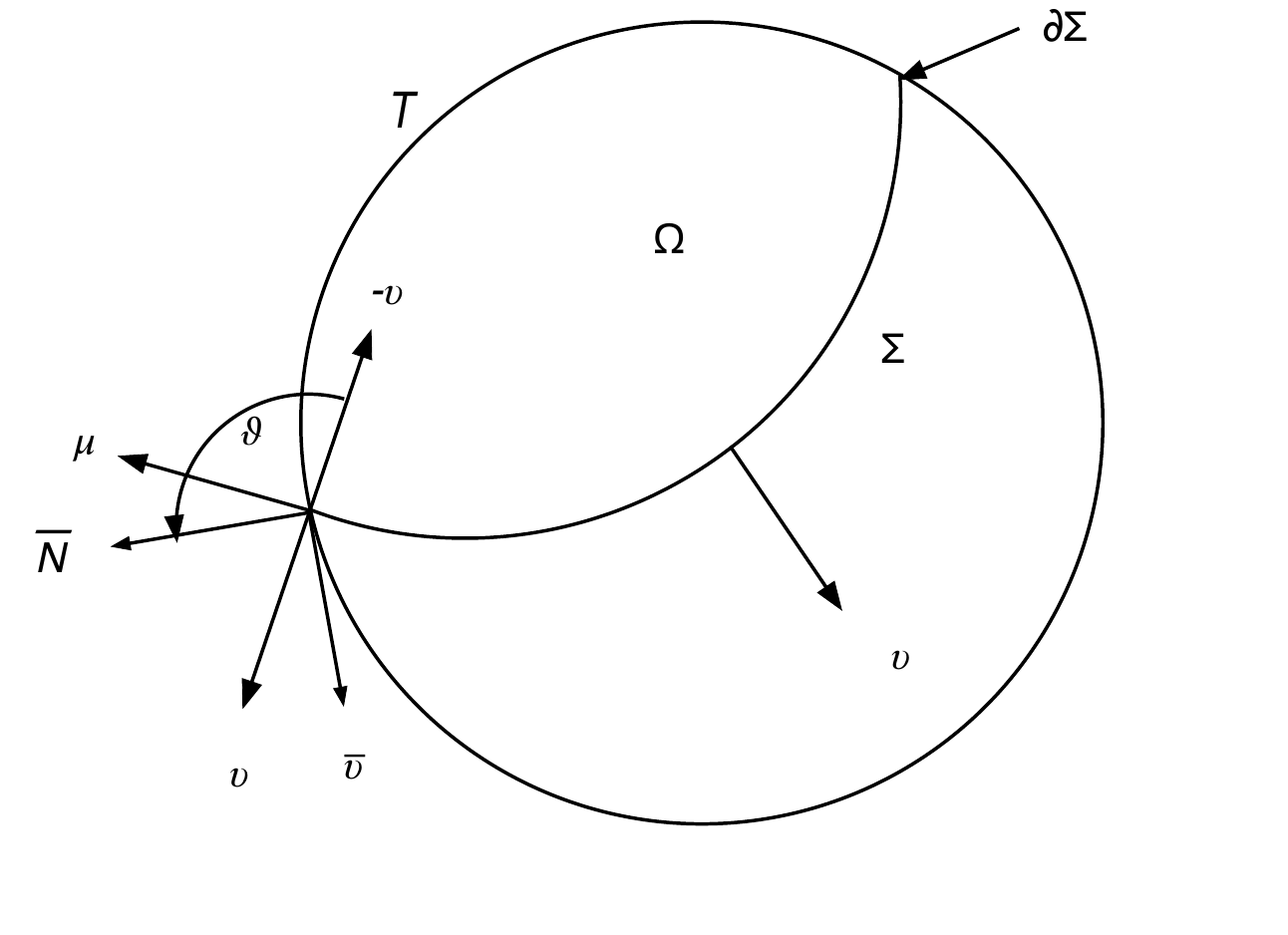}
 \label{fig1} 
 \caption{$\Sigma=x(M)$  and $\p \Sigma=x(\p M)$}
 \end{figure}


Since in this paper we consider immersions, we need to introduce generalized definitions of area, volume and a wetting area  for an isometric immersion. 
For embedded hypersurfaces, these generalized definitions are certainly equivalent to the usual definitions (See \cite{RV, RS}).

By an admissible variation of $x$ we mean a differentiable map $x: (-\epsilon, \epsilon)\times M\to B\subset \bar M$ such that $x(t, \cdot): M\to B$ is an immersion satisfying $x(t, {\rm int} M)\subset{\rm int} B$ and $x(t, \p M)\subset\p B$ for every $t\in (-\ep, \ep)$ and $x(0, \cdot)=x$.
For this variation, 
the area  functional             $A: (-\ep, \ep)\to\rr$ and the volume functional $V: (-\ep, \ep)\to\rr$ are defined by 
\begin{eqnarray*}
&&A(t)=\int_{ M} dA_t,\\
&&V(t)=\int_{[0,t]\times M} x^*dV_{\bar M},
\end{eqnarray*}
where $dA_t$ is the area element of $M$  with respect to the metric induced by $x(t, \cdot)$ and $dV_{\bar M}$ is the volume element of $\bar M$.
A variation is said to be volume-preserving if $V(t)=V(0)=0$ for each $t\in (-\ep, \ep)$. 
Another area functional, which is called wetting area functional, $W(t): (-\ep, \ep)\to\rr$ is defined by 
$$W(t)=\int_{[0,t]\times \p M} x^*dA_{\p B},$$
where $dA_{\p B}$ is the area element of $\p B$.

Fix a real number $\theta\in (0, \pi)$. The energy functional  $E(t): (-\ep, \ep)\to\rr$ is defined by
$$E(t)=A(t)-\cos \theta \, W(t).$$
The first variation formulae of $V(t)$ and $E(t)$ for an admissible variation with  a variation vector field $Y=\frac{\p}{\p t}x(t,\cdot)|_{t=0}$ are given by
\begin{eqnarray*}
&&V'(0)=\int_M \bar g(Y, \nu)dA,\\
&&E'(0)=\int_M H\bar g(Y, \nu)dA+\int_{\p M} \bar g(Y, \mu-\cos \theta \, \bar \nu)ds,
\end{eqnarray*}
where $dA$ and $ds$ are the area element of $M$ and $\p M$ respectively, see e.g. \cite{RS}. 

An immersion $x: M\to B$ is said to be {\it capillary} if it is a critical point of the energy function $E$ for any volume-preserving variation of $x$. 
It follows from the above  first variation formulae that $x$ is capillary if and only if $x$ has constant mean curvature and $\p M$ intersects $\p B$ at the constant angle $\theta$. We make a convention on the choice of $\nu$ to be
the opposite direction of mean curvature vector so that the  mean curvature of a spherical cap  is positive. Under this convention, along $\p M$, the angle between $-\nu$ and $\bar N$ or equivalently between $\mu$ and $\bar \nu$ is everywhere equal to $\th$ (see Figure 1). To be more precise, in the normal bundle of $\p M$, we have the following relations:
 \begin{eqnarray} 
&&\mu=\sin \th \, \bar N+\cos \th\, \bar \nu,  \label{mu0}\\&&\nu=-\cos \th \, \bar N+\sin \th\, \bar \nu.\label{nu0}
\end{eqnarray}

 For each smooth function $\vp$ on $M$ with $\int_M \vp dA_M=0$, there exists an admissible volume-preserving variation of $x$ with the variation vector field
 having $\varphi \nu$ as normal part (see  \cite{RS}, page 348).
When $x$ is a capillary hypersurface, for an admissible volume-preserving variation with respect to $\vp$, the second variational formula of $E$ is given by
\begin{eqnarray}\label{stab-ineq}
&&E''(0)=\int_M -\vp(\De \vp+(|h|^2+\overline{{\rm Ric}}(\nu,\nu))\vp) dA+\int_{\p M} \vp(\n_\mu \vp-q \vp)ds.
\end{eqnarray}
Here
$$q=\frac{1}{\sin \th}h^{\p B}(\bar \nu, \bar \nu)+\cot \th \, h(\mu,\mu),$$
$\overline{\rm Ric}$ is the Ricci curvature tensor of $\bar M$,  
and $h^{\p B}$ is the second fundamental form of $\p B$ in $\bar M$ given by $h^{\p B}(X, Y)= \bar g(\bar \n_X \bar N, Y)$, see e.g. \cite{RS}.

A capillary hypersurface is called stable if $E''(0)\ge 0$ for all volume-preserving variations, that is, \begin{eqnarray*}
E''(0)\ge 0,\qquad \forall \vp\in \mathcal{F}:=\left\{\vp\in C^\infty(M)| \int_M \vp dA=0\right\}.
\end{eqnarray*}

The following proposition is a well-known and fundamental  fact for capillary hypersurfaces when $\p B$ is umbilical in $\bar M$.  
 \begin{prop} \label{lemma1} Assume  $\p B$ is umbilical in $\bar M$. Let $x: M\to B$ be an immersion whose boundary $\p M$ intersects $\partial B$ at a constant angle $\th$. Then $\mu$ is a principal direction of $\p M$ in $M$. Namely, $h(e, \mu)=0$ for any $e\in T(\p M)$. In turn, \begin{eqnarray*}
\bar \n_\mu \nu= h(\mu, \mu)\mu.
\end{eqnarray*}
\end{prop}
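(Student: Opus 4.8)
The plan is to compute $h(e,\mu)=\bar g(\bar\n_e\nu,\mu)$ directly for $e\in T(\p M)$, exploiting the constancy of the contact angle together with the orientation relations \eqref{mu0}--\eqref{nu0}, and then to read off the formula for $\bar\n_\mu\nu$ from the resulting principal-direction property.

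First I would differentiate \eqref{nu0} in a direction $e\in T(\p M)$. Since $\th$ is constant along $\p M$, the terms carrying $e(\th)$ drop out and only $\bar\n_e\bar N$ and $\bar\n_e\bar\nu$ survive, giving $\bar\n_e\nu=-\cos\th\,\bar\n_e\bar N+\sin\th\,\bar\n_e\bar\nu$. Pairing this with \eqref{mu0} and using $\bar g(\bar\n_e\bar N,\bar N)=\bar g(\bar\n_e\bar\nu,\bar\nu)=0$ (from $|\bar N|=|\bar\nu|=1$) and $\bar g(\bar\n_e\bar\nu,\bar N)=-\bar g(\bar\n_e\bar N,\bar\nu)$ (from $\bar g(\bar N,\bar\nu)=0$), the mixed terms combine to
\[
h(e,\mu)=-(\cos^2\th+\sin^2\th)\,\bar g(\bar\n_e\bar N,\bar\nu)=-h^{\p B}(e,\bar\nu).
\]
The umbilicity of $\p B$ then closes this step: writing $h^{\p B}=\l\,\bar g$ on $T(\p B)$ and noting that $e\in T(\p M)\subset T(\p B)$ while $\bar\nu\perp T(\p M)$ inside $\p B$, one gets $h^{\p B}(e,\bar\nu)=\l\,\bar g(e,\bar\nu)=0$. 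Hence $h(e,\mu)=0$ for all $e\in T(\p M)$, i.e. $\mu$ is a principal direction of $\p M$ in $M$.

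For the last assertion I would argue that, since $|\nu|\equiv 1$, the vector $\bar\n_\mu\nu$ is tangent to $M$ along $\p M$; expanding it in a basis $\{e_1,\dots,e_{n-1},\mu\}$ of $T_pM$ adapted to $\p M$, the components $\bar g(\bar\n_\mu\nu,e_i)=h(\mu,e_i)$ vanish by the step just proved, while $\bar g(\bar\n_\mu\nu,\mu)=h(\mu,\mu)$, so $\bar\n_\mu\nu=h(\mu,\mu)\mu$.

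The whole proof is essentially a short structure-equation computation, so there is no substantive obstacle; the only place demanding care is the bookkeeping of the four normal fields $\{\nu,\mu,\bar N,\bar\nu\}$ and the orientation conventions in \eqref{mu0}--\eqref{nu0}, which is exactly what makes the two cross terms cancel with the correct sign and lets the factor $\sin^2\th+\cos^2\th$ appear.
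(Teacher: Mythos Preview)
Your proof is correct and follows essentially the same route as the paper: substitute the decompositions \eqref{mu0}--\eqref{nu0} into $h(e,\mu)=\bar g(\bar\n_e\nu,\mu)$, reduce to $-h^{\p B}(e,\bar\nu)$, and invoke umbilicity. You spell out the vanishing of the diagonal terms and the sign flip $\bar g(\bar\n_e\bar\nu,\bar N)=-\bar g(\bar\n_e\bar N,\bar\nu)$ more explicitly than the paper does, and you also supply the short argument for $\bar\n_\mu\nu=h(\mu,\mu)\mu$ which the paper leaves implicit, but there is no substantive difference in strategy.
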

\begin{proof}
For $e\in T(\p M)$, by using \eqref{mu0} and \eqref{nu0}, we have
\begin{eqnarray*}
h(e, \mu)&=& \bar g(\bar \n_e \nu, \mu)=\bar g(\bar \n_e (-\cos \th \, \bar N+\sin \th \, \bar \nu), \sin \th \, \bar N+\cos \th \, \bar \nu)
\\&=& -\bar g(\bar \n_e \bar N,  \bar \nu)= - h^{\p B}(e, \bar \nu)=0.\end{eqnarray*}
\end{proof}

\medskip

\section{Capillary Hypersurfaces in a Euclidean Ball}
\label{sec3}

In this section, we consider the case $(\bar M, \bar g)=(\rr^{n+1}, \delta)$ and $B=\bar \bb^{n+1}$ is the Euclidean unit ball (in our notation, $\bb^{n+1}$ is the Euclidean unit open ball).
In this case, $\overline{\rm Ric}\equiv 0$, $h^{\p\bb}=g^{\p\bb}$ and $\bar N(x)=x$.
Abuse of notation, we use $x$ to denote the position vector in $\rr^{n+1}$. We use $\<\cdot, \cdot\>$ to denote the Euclidean inner product.

\subsection{A new Minkowski type formula in $\rr^{n+1}$}\

In this subsection we establish a new Minkowski type formula, which is very powerful  for hypersurfaces in $\bb^{n+1}$ with free boundary  or 
intersecting $\p \bb^{n+1}$ with a constant angle.

We first consider a  conformal Killing vector field.
For each constant vector field $a\in \mathbb{R}^{n+1}$, define a corresponding smooth vector field $X_a$ in $\rr^{n+1}$ by
\begin{eqnarray}\label{ConfKillv}
X_a=\<x, a\>x-\frac12(|x|^2+1)a.
\end{eqnarray}

Define $f:\R^{n+1}\backslash {(0,-1)} \to \R^{n+1}$
by
\begin{equation*}
f(u,v)=\frac {2(u,0)+(|u|^2+v^2-1) e_{n+1}}{|u|^2 +(1+v)^2},\end{equation*}
where $(u,v)\in \R^{n+1}=\R^n\times \R$ and $e_{n+1}=(0,1)$.
One can check that $f$ maps  $\R^{n+1}_+ \to {\mathbb B}^{n+1}$ and  $\p \R^{n+1}_+ \to \ss^n$.
Moreover $f$ is conformal. In fact
\begin{equation*}
f^*(\d_{{\mathbb B}^{n+1}})=\frac 4{(|u|^2+(1+v)^2)^2} \d_{\R^{n+1}_+}.\end{equation*}
If one transfers the free boundary problem in $\bb^{n+1}$ to the free boundary problem in $\R^{n+1}_+$ with the pull back metric $f^*(\d_{{\mathbb B}^{n+1}})$, one obtains
an equivalent problem. The vector field $X_a$ with $a=e_{n+1}$ is the push-forward of
 the radial vector field (or the position vector field) $(u,v)$ with respect to the origin in $\R^{n+1}_+$, which is usually important in such problems. This is the way 
 we found that this vector field should be useful in the capillary problems. From this observation, it is clear that $X_a$ is conformal Killing and tangential to $\partial \bb^{n+1}$. Namely, we have the 
 following two simple but crucial  properties of $X_a$.
\begin{prop}\label{ConfK}\
$X_a$ is a conformal Killing vector field and its restriction on $\p \bb^{n+1}$ is a tangential vector field on $\p \bb^{n+1}$, i.e.,
\begin{itemize}
\item[(i)] $X_a$ is a conformal Killing vector field in $\rr^{n+1}$ with $\mathcal{L}_{X_a} \bar g= \<x, a\>\bar g$, namely,
\begin{eqnarray}\label{confKill}
\frac12\big[\bar \n_i (X_a)_j+ \bar \n_j (X_a)_i\big]=\<x, a\>\delta_{ij}.
\end{eqnarray}
\item[(ii)] $X_a|_{\p \bb}$ is a tangential vector field on $\p\mathbb{B}$. I.e.,
\begin{eqnarray}\label{x5}
&&\<X_a, x\>|_{\p\bb}=0.
\end{eqnarray}
\end{itemize}
\end{prop}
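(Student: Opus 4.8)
The plan is to verify both statements by a direct computation in the standard Cartesian coordinates of $\rr^{n+1}$, in which $\bar g=\delta$ and $\bar\nabla$ is just ordinary coordinate differentiation. One could alternatively deduce (i) conceptually from the conformality of the map $f$ recorded above — the push-forward of a dilation/radial vector field under a conformal diffeomorphism is automatically conformal Killing — but for a self-contained argument the coordinate computation is shorter, and it has the advantage of producing the precise conformal factor $\<x,a\>$, which is exactly the weight $V_a$ needed for the Minkowski formula.

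For part (i), I would write $(X_a)_j=\<x,a\>\,x_j-\tfrac12(|x|^2+1)a_j$ and use the elementary identities $\bar\nabla_i\<x,a\>=a_i$, $\bar\nabla_i x_j=\delta_{ij}$ and $\bar\nabla_i|x|^2=2x_i$ to get
\[
\bar\nabla_i(X_a)_j=a_ix_j+\<x,a\>\delta_{ij}-x_ia_j .
\]
Symmetrizing in $i$ and $j$, the antisymmetric pieces $a_ix_j-x_ia_j$ cancel against their transposes, leaving $\bar\nabla_i(X_a)_j+\bar\nabla_j(X_a)_i=2\<x,a\>\delta_{ij}$, which is precisely \eqref{confKill}; equivalently $\mathcal{L}_{X_a}\bar g=\<x,a\>\bar g$.

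For part (ii), I would simply restrict to $\p\bb^{n+1}$, where $|x|^2=1$ and the outward unit normal is $x$ itself, and compute
\[
\<X_a,x\>=\<x,a\>|x|^2-\tfrac12(|x|^2+1)\<a,x\>=\<x,a\>-\<a,x\>=0 ,
\]
so $X_a$ is everywhere tangent to the sphere.

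As for the main obstacle: honestly there is none at the level of this proposition — both claims reduce to one-line computations once $X_a$ is written out, and the only real "content" is the choice of $X_a$ (together with the companion weight $V_a=\<x,a\>$), which is motivated by the conformal model $f$ rather than discovered through the verification. The genuine difficulty is deferred to the next subsection, where these two properties must be combined with the structure equations of the immersion and the boundary relations \eqref{mu0}--\eqref{nu0} to integrate out the boundary term and arrive at the Minkowski identity \eqref{eq0.3}.
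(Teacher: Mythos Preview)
Your proof is correct and is exactly the ``direct computation'' the paper alludes to; the paper's own proof simply states that this is a well-known fact verifiable by a direct computation, without writing out the details you supply. There is nothing to add.
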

\begin{proof}  It is a well-known fact and one can check by a direct computation. 
\end{proof}

{{\begin{remark} The conformal Killing property of $X_a$ is well-known in conformal geometry. For each $a\in \rr^{n+1}$, $X_a$ generates a $1$-parameter family of conformal automorphism of $\bb^{n+1}$ onto itself, see \cite{LY}, page 274.  The restriction of $X_a$ to $\ss^n$ gives a conformal Killing vector field on $\ss^n$ generating an associated $1$-parameter family of conformal automorphism of $\ss^{n}$, which has been widely used in differential geometry and conformal geometry, see e.g. \cite{B,CY, MN, MR}. {This vector field was used by Fraser and Schoen in their study of free boundary to show the result mentioned in the Introduction about the first Steklov eigenvalue \cite{S_Talk}.}
 We also realized that this vector field has already been used in the capillary problems implicitly  by Ros-Vergasta \cite{RS} and explicitly by Marinov \cite{Marinov0} in $2$-dimension and Li-Xiong \cite{LiXiong} in any dimensions.\end{remark}}}

Utilizing the conformal Killing vector field $X_a$, we show the following Minkowski type formula.
\begin{prop}\label{MinkEucl}  Let $x: M\to \bar \bb^{n+1}$ be an  isometric immersion into the Euclidean unit ball, whose boundary $\p M$ intersects $\p \bb^{n+1}$ at a constant angle $\th \in (0, \pi)$. Let $a\in \rr^{n+1}$ be a constant vector field and $X_a$ be defined by \eqref{ConfKillv}. Then
\begin{eqnarray}\label{Mink1}
\int_M n \<x+\cos \th \, \nu, a\> dA=\int_M H\<X_a, \nu\> dA.
\end{eqnarray}
\end{prop}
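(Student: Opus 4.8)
\textbf{Proof strategy for Proposition \ref{MinkEucl}.}
The plan is to integrate the conformal Killing identity \eqref{confKill} over $M$ and handle the resulting boundary term using the contact angle condition. First I would decompose the ambient vector field $X_a$ along $M$ into its tangential and normal parts, writing $X_a = X_a^T + \langle X_a, \nu\rangle \nu$. Taking the divergence on $M$ of the tangential part $X_a^T$, one computes $\mathrm{div}_M X_a^T = \sum_i \langle \bar\nabla_{e_i} X_a, e_i\rangle - \langle X_a, \nu\rangle H$, where $\{e_i\}$ is a local orthonormal frame for $TM$. By the conformal Killing property \eqref{confKill}, the first term equals $n\langle x, a\rangle$ (since tracing $\mathcal{L}_{X_a}\bar g = \langle x,a\rangle \bar g$ over the $n$-dimensional tangent space of $M$ gives $n\langle x, a\rangle$). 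Hence $\mathrm{div}_M X_a^T = n\langle x, a\rangle - H\langle X_a, \nu\rangle$.

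Next I would apply the divergence theorem on $M$: $\int_M \mathrm{div}_M X_a^T \, dA = \int_{\p M} \langle X_a^T, \mu\rangle \, ds = \int_{\p M} \langle X_a, \mu\rangle \, ds$, where the last equality holds because $\mu \in TM$. Combining with the previous step yields
\begin{equation*}
\int_M n\langle x, a\rangle \, dA - \int_M H\langle X_a, \nu\rangle \, dA = \int_{\p M} \langle X_a, \mu\rangle \, ds.
\end{equation*}
So the task reduces to evaluating the boundary integral $\int_{\p M}\langle X_a, \mu\rangle\, ds$ and showing it equals $\int_M n\cos\theta\,\langle\nu, a\rangle\, dA - \int_M n\cos\theta\,\langle\nu,a\rangle\,dA$... more precisely, showing it produces exactly the missing $\int_M n\cos\theta\,\langle\nu,a\rangle\,dA$ term after one more application of the divergence theorem to a suitable boundary vector field.

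On $\p M$ we have $|x| = 1$, so $X_a = \langle x, a\rangle x - a$ there. Using the angle relations \eqref{mu0}--\eqref{nu0}, namely $\mu = \sin\theta\, \bar N + \cos\theta\, \bar\nu$ with $\bar N = x$ on $\p\bb^{n+1}$, and the tangency property \eqref{x5} which gives $\langle X_a, x\rangle = 0$ on $\p M$, one finds $\langle X_a, \mu\rangle = \cos\theta\,\langle X_a, \bar\nu\rangle$. Since $\bar\nu$ is normal to $\p M$ inside $\ss^n$, and $\langle X_a, x\rangle = 0$, the component $\langle X_a,\bar\nu\rangle$ is the full normal derivative data; one recognizes that $X_a|_{\ss^n}$, as the conformal Killing field on $\ss^n$, satisfies $\langle X_a, \bar\nu\rangle = -$ (the $M$-tangential part of $a$ paired with $\bar\nu$) adjusted by $\langle x,a\rangle$-terms that vanish against $\bar\nu$. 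Concretely $\langle X_a, \bar\nu\rangle = -\langle a, \bar\nu\rangle$ on $\p M$. Then, using \eqref{nu0} to write $\bar\nu = \sin\theta\,\nu + \cos\theta\,\bar N = \sin\theta\,\nu + \cos\theta\, x$, and using once more the divergence theorem on $M$ applied to the tangential part of the \emph{constant} field $a$ (whose divergence on $M$ is $-H\langle a,\nu\rangle$, with $\mathrm{div}_M(n x^T)$-type bookkeeping giving $\int_M n\,dA$ back for the position field), I expect the boundary term $\int_{\p M}\cos\theta\langle X_a,\bar\nu\rangle ds$ to collapse precisely to $\int_M n\cos\theta\langle\nu, a\rangle\, dA$, which when moved to the left gives \eqref{Mink1}.

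\textbf{Main obstacle.} The routine part is the interior divergence computation; the delicate part is the boundary analysis — correctly identifying $\langle X_a, \bar\nu\rangle$ on $\p M$ and converting the resulting $\p M$-integral back into an integral over $M$ of $\langle\nu, a\rangle$ with the right constant $n\cos\theta$. This requires a second, carefully chosen auxiliary vector field on $M$ (built from the constant field $a$ and the position field $x$, restricted so that on $\p M$ its $\mu$-component reproduces $\cos\theta\langle X_a,\bar\nu\rangle$) and a clean use of Proposition \ref{lemma1} (that $\mu$ is a principal direction), so that no extra curvature terms survive. Getting the signs and the $\sin\theta/\cos\theta$ bookkeeping consistent with the conventions \eqref{mu0}--\eqref{nu0} is where I expect to spend the most care.
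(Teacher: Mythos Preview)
Your proposal is correct and follows essentially the same route as the paper: compute $\mathrm{div}_M X_a^T = n\langle x,a\rangle - H\langle X_a,\nu\rangle$ via the conformal Killing property, apply the divergence theorem, reduce the boundary term to $-\cos\theta\int_{\partial M}\langle a,\bar\nu\rangle\,ds$, and then convert this $\partial M$-integral into $n\int_M\langle\nu,a\rangle\,dA$ by a second divergence-theorem step with an auxiliary field built from $a$ and $x$. The paper's specific choice for that auxiliary field is $Z_a = \langle\nu,a\rangle\, x - \langle x,\nu\rangle\, a$, which satisfies $\mathrm{div}_M(Z_a^T)=n\langle\nu,a\rangle$ and $\langle Z_a,\mu\rangle|_{\partial M}=\langle\bar\nu,a\rangle$; your tentative suggestion of using $a^T$ alone would not work (it produces $-\int_M H\langle a,\nu\rangle$, not $n\int_M\langle\nu,a\rangle$), and Proposition~\ref{lemma1} is in fact not needed for this argument.
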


\begin{proof}
Denote by $X_a^T$ the tangential projection of $X_a$ on $M$. 
Let $\{e_\a\}_{\a=1}^{n}$ be an orthonormal frame on $M$. 
We claim that \begin{eqnarray}\label{confKill2}
\frac12\left[\n_\a (X_a^T)_\b+\n_\b (X_a^T)_\a\right]= \<x, a\>g_{\a\b}-h_{\a\b}\<X_a,\nu\>.
\end{eqnarray}
Here $\n_\a (X_a^T)_\b :=\<\n_{e_\a} X_a^T, e_{\b}\>$.
In fact, 
\begin{eqnarray*}
\n_\a (X_a^T)_\b&=& \<\bar\n_{e_\a} X_a^T, e_{\b}\>\\
&=&\<\bar\n_{e_\a} X_a, e_{\b}\>- \<\bar\n_{e_\a} (\<X_a, \nu\>\nu), e_{\b}\>
\\&=&\bar\n_\a (X_a)_\b-\<X_a, \nu\>\<\bar\n_{e_\a} \nu, e_{\b}\>
\\&=& \bar\n_\a (X_a)_\b-h_{\a\b}\<X_a, \nu\>.
\end{eqnarray*}
By using \eqref{confKill}, we get the claim. 

Taking trace of $\eqref{confKill2}$ with respect to the induced metric $g$ and integrating over $M$, we have
 \begin{eqnarray}\label{min1}
&&\int_M n \<x, a\> - H \<X_a,\nu\> dA
=\int_M \div_M (X_a^T) dA
= \int_{\p M} \<X_a^T, \mu\> ds.
\end{eqnarray}
Note that on $\p M$, $\bar N=x$ and $X_a=\<x, a\>x-a$. By using \eqref{mu0}, \eqref{nu0} and \eqref{x5},  
we deduce \begin{eqnarray*}
\<X_a^T, \mu\>&=&\<X_a, \mu\>=\<X_a, \sin \th \, \bar N+\cos \th \, \bar \nu\>=\cos \th\, \<X_a, \bar \nu\>\\&=&\cos \th\, (\<x, a\>\<x, \bar \nu\>-\<a, \bar \nu\>)=-\cos \th\, \<a, \bar \nu\>.
\end{eqnarray*}
It follows from \eqref{min1}
\begin{eqnarray}\label{min2}
&&\int_M n\<x, a\>-H\<X_a, \nu\> dA=-\cos \th\int_{\p M} \<\bar \nu, a\>ds.
\end{eqnarray}
When $\theta =\frac{\pi}{2}$, i.e., if we are in the free boundary case, the Minkowski formula \eqref{Mink1} follows already from \eqref{min2}.
For the general case, we claim
\begin{eqnarray}\label{AS1}
n \int_M \<\nu, a\>dA= \int_{\p M}\<\bar \nu, a\>ds.
\end{eqnarray}
It is easy to see that the Minikowski formula \eqref{Mink1} follows from the claim and \eqref{min2}.

It remains to show this claim.  It has been shown in \cite{AS} that
\begin{eqnarray}\label{AS}
n \int_M \<\nu, a\>dA= \int_{\p M} \<x, \mu\>\<\nu, a\>-\<x, \nu\>\<\mu, a\>ds.
\end{eqnarray}
For the convenience of reader, we give a proof of \eqref{AS}.
Set $Z_a=\<\nu, a\>x- \<x, \nu\>a$. Then 
\begin{eqnarray*}
\div_M [(Z_a)^T]&=& [h(a^T, x^T)+\<\nu, a\> (n-\<x, \nu\>H)]-[h(x^T, a^T)-\<x, \nu\>\<\nu, a\>H]
\\&=&n\<\nu, a\>.
\end{eqnarray*}
Then \eqref{AS} follows by integration by parts.
From \eqref{mu0} and \eqref{nu0}, we deduce \begin{eqnarray*}
&&\<x, \mu\>\<\nu, a\>-\<x, \nu\>\<\mu, a\>\\
 &=&\sin \th\< -\cos \th \,\bar N+\sin \th\, \bar \nu, a\>+\cos\th\<\sin \th \, \bar N+\cos \th\, \bar \nu, a\>
 \\&=&\<\bar \nu, a\>.
\end{eqnarray*}
Therefore, we get the claim \eqref{AS1} and the proof is completed.
\end{proof}

\begin{remark} \label{rmk3.2} For the free boundary problem, i.e., $\theta =\pi/ 2$, we obtain the Minkowski formula discussed in the Introduction:
\begin{eqnarray}\label{Mink1_0}
n\int_M \<x, a\> dA=\int_M H\<X_a, \nu\> dA.
\end{eqnarray}
We  remark that \eqref{Mink1_0} holds also for any compact hypersurfaces without boundary in $\R^{n+1}$ with the same proof, just ignoring the boundary integral.
To our best knowledge it is also  new for any compact hypersurfaces without boundary and we believe that it has its own interest.
 
\end{remark}

Minkowski formula \eqref{Mink1} plays a crucial role in the proof of uniqueness  of stable capillary hypersurfaces in a Euclidean ball in the next subsection.
Its further interesting applications will be presented in Section \ref{sec5}.

 \subsection{Uniqueness of stable capillary hypersurfaces in a Euclidean ball}\

\begin{prop}\label{prop3.3} Let $x: M\to\bar \bb^{n+1}$ be an  isometric immersion into the Euclidean unit ball, whose boundary $\p M$ intersects $\p \bb^{n+1}$ 
at a constant angle $\th \in (0, \pi)$. Let $a\in \rr^{n+1}$ be a constant vector field. Then along $\p M$, 
\begin{eqnarray}
\bar \n_\mu \<x+\cos \th\,\nu, a\> &=& q \<x+\cos \th\,\nu, a\>, \label{bdry1}
\\\bar \n_\mu \<X_a, \nu\>&=& q \<X_a, \nu\>,\label{bdry2}
\end{eqnarray}
where \begin{eqnarray}\label{q1}
q=\frac{1}{\sin \th}+\cot \th \ h(\mu,\mu).
\end{eqnarray}
\end{prop}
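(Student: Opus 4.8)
The plan is to establish both boundary identities by a direct computation along $\p M$, using only the orthogonal splitting relations \eqref{mu0}--\eqref{nu0}, the umbilicity of $\p\bb^{n+1}$, and the conformal Killing property of $X_a$ recorded in Proposition \ref{ConfK}. There is no conceptual difficulty here; the content is trigonometric bookkeeping, together with the one genuinely geometric input $\bar\n_\mu\nu=h(\mu,\mu)\mu$ coming from Proposition \ref{lemma1}.

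First I would record the elementary facts valid \emph{along $\p M$}, where $\p B=\p\bb^{n+1}$ is the unit sphere: $\bar N=x$, $|x|=1$, hence $X_a=\<x,a\>x-a$ and $\<X_a,x\>=0$ by \eqref{x5}; and $h^{\p B}=g^{\p B}$, so that the coefficient appearing in the stability form is exactly $q=\tfrac1{\sin\th}+\cot\th\,h(\mu,\mu)$, which is \eqref{q1}. Combining \eqref{mu0}--\eqref{nu0} with $\bar N=x$ gives the two pointwise identities on $\p M$
\begin{equation*}
x+\cos\th\,\nu=\sin\th\,\mu,\qquad \<X_a,\nu\>=-\sin\th\,\<a,\bar\nu\>,
\end{equation*}
together with the scalar products $\<x,\nu\>=-\cos\th$, $\<x,\mu\>=\sin\th$, $\<\mu,\nu\>=0$, and $\<X_a,\mu\>=-\cos\th\,\<a,\bar\nu\>$ (the last already computed in the proof of Proposition \ref{MinkEucl}). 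Next, since $a$ is constant and $\bar\n_\mu x=\mu$, differentiating the explicit formula \eqref{ConfKillv} for $X_a$ (equivalently, using \eqref{confKill}) yields $\bar\n_\mu X_a=\<\mu,a\>x+\<x,a\>\mu-\<x,\mu\>a$, while Proposition \ref{lemma1} gives $\bar\n_\mu\nu=h(\mu,\mu)\mu$.

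With these in hand, \eqref{bdry1} follows from $\bar\n_\mu\<x+\cos\th\,\nu,a\>=\<\mu+\cos\th\,h(\mu,\mu)\mu,a\>=(1+\cos\th\,h(\mu,\mu))\<\mu,a\>$, compared with $q\<x+\cos\th\,\nu,a\>=q\sin\th\<\mu,a\>=(1+\cos\th\,h(\mu,\mu))\<\mu,a\>$. For \eqref{bdry2} I would expand $\bar\n_\mu\<X_a,\nu\>=\<\bar\n_\mu X_a,\nu\>+h(\mu,\mu)\<X_a,\mu\>$, substitute the scalar products above to get $\<\bar\n_\mu X_a,\nu\>=-\cos\th\,\<\mu,a\>-\sin\th\,\<\nu,a\>$, and then use \eqref{mu0}--\eqref{nu0} once more to collapse this to $-\<\bar\nu,a\>$; hence $\bar\n_\mu\<X_a,\nu\>=-\<\bar\nu,a\>-\cos\th\,h(\mu,\mu)\<a,\bar\nu\>=q(-\sin\th\,\<a,\bar\nu\>)=q\<X_a,\nu\>$.

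The main thing to be careful about is that $\mu,\bar\nu,\bar N$ are defined only along $\p M$, so every derivative taken must be in the direction $\mu$ (tangent to $M$, normal to $\p M$ in $M$), and one must keep straight which identities hold on all of $M$ versus only on $\p M$ where $|x|=1$. The umbilicity of $\p B$, entering through Proposition \ref{lemma1}, is precisely what makes $\bar\n_\mu\nu$ a multiple of $\mu$ and what forces the proportionality constant in both identities to be exactly the stability coefficient $q$; I expect no obstacle beyond carrying out these substitutions cleanly.
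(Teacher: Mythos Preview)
Your proposal is correct and follows essentially the same route as the paper: both proofs use Proposition~\ref{lemma1} to get $\bar\nabla_\mu\nu=h(\mu,\mu)\mu$, differentiate $X_a$ directly via \eqref{ConfKillv}, and reduce everything with the angle relations \eqref{mu0}--\eqref{nu0}. The only cosmetic difference is that in \eqref{bdry2} you organize the final expression as a multiple of $\langle a,\bar\nu\rangle$, whereas the paper writes both sides as $-q(\cos\theta\,\langle x,a\rangle+\langle\nu,a\rangle)$; these are the same quantity since $\bar\nu=\cos\theta\,\mu+\sin\theta\,\nu$.
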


\begin{proof} Using Proposition \ref{lemma1},
\begin{eqnarray*}
&&\bar \n_\mu \<x+\cos \th\,\nu, a\>
= \<\mu+\cos \th\, h(\mu, \mu)\mu, a\>=q \sin \th \<\mu, a\>.
\end{eqnarray*}
On the other hand, using \eqref{mu0} and \eqref{nu0},
\begin{eqnarray*}
\<x+\cos \th\,\nu, a\>&=& \<x+\cos \th (-\cos \th \bar N+\sin \th \bar \nu), a\>\\&=&\sin \th \<\sin \th \bar N+\cos \th \bar \nu, a\>= \sin \th \<\mu, a\>.
\end{eqnarray*}
Thus we get \eqref{bdry1}.

Using the definition \eqref{ConfKillv} of $X_a$ and again Proposition \ref{lemma1},
\begin{eqnarray*}
\bar \n_\mu \<X_a, \nu\>&=&\<\bar \n_\mu X_a, \nu\>+ \<X_a,  \bar \n_\mu \nu\>
\\&=&\<\bar \n_\mu (\<x, a\>x-\frac12(|x|^2+1)a), \nu\>+h(\mu, \mu)\<X_a, \mu\>
\\&=&\<\<\mu, a\>x+\<x, a\>\mu -\<x, \mu\>a, \nu\>+h(\mu, \mu)\<\<x, a\>x-a, \mu\>
\\&=&-\cos \th\<\mu, a\>-\sin\th \<\nu, a\>+h(\mu, \mu)(\sin \th \<x, a\>-\<\mu, a\>).
\end{eqnarray*}
Note that $x=\bar N= \sin \th \, \mu-\cos \th\,  \nu$ and in turn $\mu= \frac{1}{\sin \th}x+ \cot \th\,  \nu$, we deduce further
\begin{eqnarray*}
\bar \n_\mu \<X_a, \nu\>&=&-\cot \th (1+h(\mu, \mu)\cos \th)\<x, a\>- \frac{1}{\sin \th}(1+h(\mu, \mu)\cos \th) \<\nu, a\>
\\&=&-q (\cos \th \<x, a\>+\<\nu, a\>).
\end{eqnarray*}
On the other hand, 
\begin{eqnarray*}
\<X_a, \nu\>|_{\p M}&=&\<x, a\>\<x, \nu\>- \<a, \nu\>= -(\cos \th\<x, a\>+ \<\nu, a\>). 
\end{eqnarray*}
\eqref{bdry2} follows.
\end{proof}

\begin{prop} \label{lem3.1} Let $x: M\to \rr^{n+1}$ be an isometric immersion into the Euclidean space. Let $a\in \rr^{n+1}$ be a constant vector field. The following identities hold along $M$:
\begin{eqnarray}
&&\De x=-H\nu,\label{eq-x}
\\&&\De \frac12|x|^2=n-H\<x, \nu\>,\label{eq-x2}
\\&&\De \nu=\n H-|h|^2\nu,\label{eq-nu}
\\&&\De\<x, \nu\>=\<x, \n H\>+ H-|h|^2\<x, \nu\>, \label{eq-xnu}
\\&&\De \<X_a, \nu\>=\<X_a, \n H\>+\<x, a\>H-|h|^2\<X_a,\nu\>-n\<\nu, a\>.\label{eq-Xnu}
\end{eqnarray}
\end{prop}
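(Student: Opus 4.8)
The plan is to establish \eqref{eq-x} first by a short computation in a geodesic frame, and then to deduce the other four identities from it by repeatedly applying the Leibniz rule for the Laplacian of $\rr^{n+1}$-valued functions on $M$, namely $\De\<F,G\>=\<\De F,G\>+2\<\n F,\n G\>+\<F,\De G\>$ with $\<\n F,\n G\>:=\sum_{\a}\<\bar\n_{e_\a}F,\bar\n_{e_\a}G\>$, where $\{e_\a\}_{\a=1}^{n}$ is a local orthonormal frame on $M$. Here a vector field such as $x$ or $\nu$ is regarded as an $\rr^{n+1}$-valued function and $\De$ acts componentwise.

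First I would fix $p\in M$ and take $\{e_\a\}$ geodesic at $p$. Since $\bar\n_{e_\a}x=e_\a$, the Gauss formula $\bar\n_{e_\a}e_\b=\n_{e_\a}e_\b-h_{\a\b}\nu$ (with the sign convention $h(X,Y)=\bar g(\bar\n_X\nu,Y)$ of Section \ref{sec2}) gives $\De x=\sum_\a\bar\n_{e_\a}e_\a=-\sum_\a h_{\a\a}\,\nu=-H\nu$ at $p$, the tangential parts vanishing there; this is \eqref{eq-x}. Feeding it into the Leibniz rule for $\frac12\<x,x\>$ and using $\<\n x,\n x\>=\sum_\a|e_\a|^2=n$ gives \eqref{eq-x2}. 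For \eqref{eq-nu} I would use that $\bar\n_{e_\a}\nu$ is tangential with $\<\bar\n_{e_\a}\nu,e_\g\>=h_{\a\g}$, hence $\bar\n_{e_\a}\nu=\sum_\g h_{\a\g}e_\g$, and then differentiate once more at $p$: the Codazzi equation $\n_\a h_{\b\g}=\n_\b h_{\a\g}$, valid because $\rr^{n+1}$ is flat, gives $\De\nu=\sum_\g(\sum_\a\n_\a h_{\a\g})e_\g-\sum_{\a,\g}h_{\a\g}^2\,\nu=\n H-|h|^2\nu$. Then \eqref{eq-xnu} follows from the Leibniz rule for $\<x,\nu\>$ together with $\<\De x,\nu\>=-H$, $\<\n x,\n\nu\>=\sum_\a h_{\a\a}=H$ and $\<x,\De\nu\>=\<x,\n H\>-|h|^2\<x,\nu\>$.

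For the last and most computational identity \eqref{eq-Xnu}, I would expand $\<X_a,\nu\>=\<x,a\>\<x,\nu\>-\frac12(|x|^2+1)\<a,\nu\>$ from the definition \eqref{ConfKillv} of $X_a$ and apply the Leibniz rule to the two products. The ingredients needed, all now available, are $\n\<x,a\>=a^T$, $\De\<x,a\>=-H\<\nu,a\>$, $\De\<a,\nu\>=\<a,\n H\>-|h|^2\<a,\nu\>$, together with $\n\<x,\nu\>=\sum_\a h(x^T,e_\a)e_\a$ and $\n\<a,\nu\>=\sum_\a h(a^T,e_\a)e_\a$, which come from $\bar\n_{e_\a}\nu=\sum_\g h_{\a\g}e_\g$. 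In the resulting expression the two mixed cross-terms each equal $2h(x^T,a^T)$ and cancel, the $H\<x,\nu\>\<\nu,a\>$ terms cancel, the term $n$ in $\De\frac12(|x|^2+1)=n-H\<x,\nu\>$ produces $-n\<\nu,a\>$, and the remaining terms regroup — using $\<x,a\>x-\frac12(|x|^2+1)a=X_a$ — into $\<X_a,\n H\>+\<x,a\>H-|h|^2\<X_a,\nu\>$, giving \eqref{eq-Xnu}.

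Every step is elementary; the points that need real care are \eqref{eq-nu}, where the Codazzi equation and the flatness of $\rr^{n+1}$ enter, and \eqref{eq-Xnu}, where one must keep track of the cancellations. Alternatively, \eqref{eq-x2}, \eqref{eq-xnu} and \eqref{eq-Xnu} could be obtained by computing the divergences of the tangential fields $x^T$ and $X_a^T$ and comparing with the traced conformal Killing identity \eqref{confKill2}, but the direct computation above seems the most economical.
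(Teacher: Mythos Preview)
Your proof is correct and follows essentially the same approach as the paper: the first four identities are simply declared well-known there, and for \eqref{eq-Xnu} the paper likewise uses the Leibniz rule, though it splits $\De\<X_a,\nu\>$ as $\<\De X_a,\nu\>+2\<\n X_a,\n\nu\>+\<X_a,\De\nu\>$ (treating $X_a$ as a single $\rr^{n+1}$-valued function) rather than first expanding $\<X_a,\nu\>=\<x,a\>\<x,\nu\>-\tfrac12(|x|^2+1)\<a,\nu\>$ into scalar products as you do. The two organizations are equivalent and yield the same cancellations.
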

\begin{proof} Equations \eqref{eq-x}--\eqref{eq-xnu} are well-known. 
We now prove \eqref{eq-Xnu}.
First,
\begin{eqnarray*}
\De \<X_a, \nu\>&=&\<\De X_a, \nu\>+2\<\n X_a, \n \nu\>+\<X_a, \De \nu\>.
\end{eqnarray*}
Using the definition \eqref{ConfKillv} of $X_a$, \eqref{eq-x} and \eqref{eq-x2}, we  see
\begin{eqnarray*}
\<\De X_a, \nu\>&=& \<\De (\<x, a\>x-\frac12(|x|^2+1)a), \nu\>\\
&=&\big\<-H\<\nu, a\>x-\<x, a\>H\nu- (n-H\<x, \nu\>)a, \nu\big\>
\\&=&- H\<x, a\>-n\<\nu, a\>.
\end{eqnarray*}
Also, 
\begin{eqnarray*}
\<\n X_a, \n \nu\>&=& \<\n (\<x, a\>x-\frac12(|x|^2+1)a), \n \nu\>\\
&=&\<e_\a, a\> h(e_\a, x^T)+\<x, a\>H- \<x, e_\a\>h(e_\a, a^T)
\\&=& H\<x, a\>.
\end{eqnarray*}
Using \eqref{eq-nu}, 
\begin{eqnarray*}
&&\<X_a, \De \nu\>=  \<X_a, \n H\>-|h|^2\<X_a, \nu\>.
\end{eqnarray*}
Combining above, we get \eqref{eq-Xnu}.
\end{proof}

\begin{prop}\label{lem3.2}Let $x: M\to\bar \bb^{n+1}$ be an  isometric immersion into the Euclidean unit ball, whose boundary $\p M$ intersects $\p \bb^{n+1}$ 
at a constant angle $\th \in (0, \pi)$. 
For each constant vector field $a\in \rr^{n+1}$ define
 $$\varphi_a=n \<x+\cos\th\,\nu, a\>-H\<X_a,\nu\>$$ along $M$. Then $\varphi_a$ satisfies
 \begin{eqnarray}
\int_M \varphi_a dA&=&0, \label{eq-phi1}\\
\n_\mu \varphi_a-q\varphi_a&=&0.\label{eq-phi2}
 \end{eqnarray}
If, in addition, that $M$ has constant mean curvature, then $\varphi_a$ satisfies also
\begin{eqnarray}\label{eq-phi}
&&\Delta \varphi_a+|h|^2\varphi_a=(n|h|^2-H^2)\<x,a\>.
\end{eqnarray}
\end{prop}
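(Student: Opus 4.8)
The plan is to verify each of the three conclusions in turn, using the Minkowski formula \eqref{Mink1}, the boundary identities of Proposition \ref{prop3.3}, and the interior differential identities of Proposition \ref{lem3.1}.

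First, \eqref{eq-phi1} is immediate: integrating the definition of $\varphi_a$ over $M$ gives $\int_M \varphi_a\,dA = n\int_M \<x+\cos\th\,\nu,a\>\,dA - \int_M H\<X_a,\nu\>\,dA$, which is exactly $0$ by the Minkowski formula \eqref{Mink1}. This requires no constancy of $H$, only the constant-angle hypothesis.

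Second, \eqref{eq-phi2} follows by combining the two boundary identities \eqref{bdry1} and \eqref{bdry2} from Proposition \ref{prop3.3}. Along $\p M$ we have $\bar\n_\mu\<x+\cos\th\,\nu,a\> = q\<x+\cos\th\,\nu,a\>$ and $\bar\n_\mu\<X_a,\nu\> = q\<X_a,\nu\>$, so $\n_\mu\varphi_a = n\,q\<x+\cos\th\,\nu,a\> - H\,q\<X_a,\nu\> = q\varphi_a$ provided $H$ is constant along $\p M$ (so that it can be pulled through $\bar\n_\mu$); since we are assuming $M$ has constant mean curvature in the statements that matter, this is fine — and in fact \eqref{eq-phi2} as stated in the proposition needs only that $H$ be constant, which we may assume, or one checks that the $\bar\n_\mu H$ term cancels appropriately. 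So the only subtlety here is bookkeeping about where $H$ is differentiated; the identities of Proposition \ref{prop3.3} do the real work.

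Third, and this is the main computational step, I would prove \eqref{eq-phi}. Assume now $H$ is constant. Then $\n H = 0$, and Proposition \ref{lem3.1} simplifies: from \eqref{eq-Xnu}, $\De\<X_a,\nu\> = \<x,a\>H - |h|^2\<X_a,\nu\> - n\<\nu,a\>$; from \eqref{eq-xnu}, $\De\<x,\nu\> = H - |h|^2\<x,\nu\>$; and from the harmonic-coordinate-type identity one has $\De\<x,a\> = -H\<\nu,a\>$ (the $a$-component of \eqref{eq-x}). Therefore
\begin{eqnarray*}
\De\varphi_a &=& n\De\<x,a\> + n\cos\th\,\De\<\nu,a\> - H\De\<X_a,\nu\> \\
&=& -nH\<\nu,a\> + n\cos\th(-|h|^2\<\nu,a\>) - H\big(\<x,a\>H - |h|^2\<X_a,\nu\> - n\<\nu,a\>\big),
\end{eqnarray*}
using $\De\<\nu,a\> = -|h|^2\<\nu,a\>$ from \eqref{eq-nu} with $\n H=0$. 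The terms $-nH\<\nu,a\>$ and $+nH\<\nu,a\>$ cancel, leaving $\De\varphi_a = -n\cos\th\,|h|^2\<\nu,a\> - H^2\<x,a\> + H|h|^2\<X_a,\nu\>$. On the other hand $|h|^2\varphi_a = n|h|^2\<x,a\> + n\cos\th\,|h|^2\<\nu,a\> - H|h|^2\<X_a,\nu\>$. Adding, the $\cos\th\,|h|^2\<\nu,a\>$ terms cancel and the $H|h|^2\<X_a,\nu\>$ terms cancel, giving $\De\varphi_a + |h|^2\varphi_a = n|h|^2\<x,a\> - H^2\<x,a\> = (n|h|^2 - H^2)\<x,a\>$, which is \eqref{eq-phi}. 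The main obstacle is simply keeping the several cancellations straight; conceptually everything is a direct consequence of Propositions \ref{prop3.3} and \ref{lem3.1} together with the Minkowski formula, and no new idea is needed beyond specializing those identities to constant $H$.
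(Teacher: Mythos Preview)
Your proof is correct and follows essentially the same approach as the paper: \eqref{eq-phi1} from the Minkowski formula (Proposition~\ref{MinkEucl}), \eqref{eq-phi2} from the boundary identities of Proposition~\ref{prop3.3}, and \eqref{eq-phi} by combining the Laplacian identities of Proposition~\ref{lem3.1} under the assumption $\n H=0$. The paper organizes the last step by recording $(\Delta+|h|^2)$ applied to each of $\<x,a\>$, $\<\nu,a\>$, $\<X_a,\nu\>$ and then taking the linear combination, whereas you compute $\Delta\varphi_a$ and $|h|^2\varphi_a$ separately before adding; the computations are identical in content. Your observation that \eqref{eq-phi2} tacitly uses constancy of $H$ (so that $\n_\mu H\cdot\<X_a,\nu\>$ does not appear) is accurate and worth noting---the paper's proof simply cites Proposition~\ref{prop3.3} without comment, since in the application $M$ is capillary and hence CMC.
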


\begin{proof} \eqref{eq-phi1} and \eqref{eq-phi2} follow from   Propositions \ref{MinkEucl} and \ref{prop3.3} respectively. If $H$ is constant, Proposition \ref{lem3.1} implies
\begin{eqnarray*}
 (\Delta+|h|^2) \<x,a\> &=& |h|^2 \<x,a\> -H\<\nu,a\>,\\
  (\Delta +|h|^2) \<X_a,\nu\>  &=& H \<x,a\>- n \<\nu,a\>,\\
   (\Delta +|h|^2) \<\nu,a\> &=&0.
\end{eqnarray*}
Then \eqref{eq-phi} follows.
\end{proof}

Now we prove the uniqueness for stable capillary hypersurfaces in a Euclidean ball.
\begin{theorem}\label{thm1}
Assume $x: M\to \bar \bb^{n+1}$ is an immersed stable capillary hypersurface in the Euclidean unit ball $\bb^{n+1}$  with constant mean curvature $H \ge 0$ and constant contact angle $\th\in (0, \pi)$. Then $x$ 
 is either a totally geodesic ball or a spherical cap.
\end{theorem}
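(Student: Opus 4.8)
The plan is to test the stability inequality \eqref{stab-ineq} against the family of admissible functions $\varphi_a$ constructed in Proposition \ref{lem3.2}. By \eqref{eq-phi1} each $\varphi_a$ lies in $\mathcal{F}$, so stability gives $E''(0)\ge 0$; on the other hand the boundary term in \eqref{stab-ineq} vanishes identically because of \eqref{eq-phi2}, and the bulk term can be rewritten using the Jacobi-type equation \eqref{eq-phi}. Concretely, $E''(0)=-\int_M \varphi_a\big(\Delta\varphi_a+(|h|^2+\overline{\mathrm{Ric}}(\nu,\nu))\varphi_a\big)dA+\int_{\p M}\varphi_a(\n_\mu\varphi_a-q\varphi_a)ds$ reduces, after using $\overline{\mathrm{Ric}}\equiv 0$ in $\rr^{n+1}$ and substituting $\Delta\varphi_a+|h|^2\varphi_a=(n|h|^2-H^2)\langle x,a\rangle$, to $E''(0)=-\int_M \varphi_a\,(n|h|^2-H^2)\langle x,a\rangle\, dA\ge 0$ for every constant vector $a$.

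The next step is to sum this inequality over an orthonormal basis $a=e_1,\dots,e_{n+1}$ of $\rr^{n+1}$ to remove the dependence on $a$. Writing $\varphi_a$ explicitly and expanding $\sum_k \varphi_{e_k}\langle x,e_k\rangle$, one gets a pointwise expression of the form $\sum_k \varphi_{e_k}\langle x,e_k\rangle = n\langle x,x\rangle + n\cos\th\langle x,\nu\rangle\langle x,?\rangle - H\langle X_?,\nu\rangle\langle x,?\rangle$; more usefully one should track that $\sum_k\langle x,e_k\rangle e_k=x$, $\sum_k\langle \nu,e_k\rangle e_k=\nu$, etc., so the sum collapses to a scalar quantity built from $|x|^2$, $\langle x,\nu\rangle$, $H$ and $|h|^2$. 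The resulting integral inequality reads (schematically) $\int_M (n|h|^2-H^2)\,\Psi\, dA\le 0$, where $\Psi\ge 0$ is the nonnegative weight arising from the basis sum (one must check $\Psi\ge 0$, using $|x|\le 1$ on $M$, the Cauchy–Schwarz bound $\langle x,\nu\rangle^2\le|x|^2$, and the sign conventions on $\nu$ and $\th$). Since by Cauchy–Schwarz $n|h|^2\ge H^2$ pointwise, the integrand is nonnegative, forcing $(n|h|^2-H^2)\Psi\equiv 0$, hence $n|h|^2=H^2$ wherever $\Psi>0$, i.e. on a dense open set, and by continuity everywhere. This is exactly the umbilicity condition $h=\frac{H}{n}g$.

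Once $M$ is totally umbilical it is a piece of a sphere or a hyperplane; combining with the free-boundary/capillary condition that $\p M$ meets $\p\bb^{n+1}$ at the constant angle $\th$ and with $H\ge 0$ identifies $M$ as either a totally geodesic ball (when $H=0$, a flat disk through a chord, which for the capillary angle to be constant must pass through the origin) or a spherical cap. One should also invoke the appropriate classification of constant-angle umbilical hypersurfaces in $\bar\bb^{n+1}$ to pin down the position: an umbilical hypersurface with constant contact angle with $\p\bb^{n+1}$ is automatically one of the model solutions, which is a short direct argument with the relations \eqref{mu0}–\eqref{nu0}.

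The main obstacle I anticipate is the second step: verifying that the weight $\Psi$ obtained from the basis sum is genuinely nonnegative (and not identically zero), and identifying precisely where it can vanish, so that the conclusion $n|h|^2=H^2$ propagates to all of $M$. This requires a careful bookkeeping of the cross terms $\langle x,\nu\rangle$, $\cos\th$, and $|x|^2$ and exploiting $|x|\le 1$ together with $|x|=1$ exactly on $\p M$; degenerate configurations (e.g. $x$ parallel to $\nu$, or $M$ passing through the origin) need separate attention. The algebra that produces $\varphi_a$ satisfying \eqref{eq-phi} is routine given Proposition \ref{lem3.1}, and the vanishing of the boundary term is immediate from \eqref{eq-phi2}; the real content is organizing the test-function sum so that the nonnegative weight emerges cleanly.
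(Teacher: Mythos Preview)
Your overall strategy matches the paper's: test stability against the functions $\varphi_a$, use \eqref{eq-phi1}, \eqref{eq-phi2}, \eqref{eq-phi} to reduce to
\[
\int_M \varphi_a\,\langle x,a\rangle\,(n|h|^2-H^2)\,dA\le 0,
\]
and then sum over an orthonormal basis. Up to this point your outline is correct and coincides with the paper. The summed inequality is exactly \eqref{stab-eq3}, with weight
\[
\gamma=n|x|^2+n\cos\theta\,\langle x,\nu\rangle-\tfrac12(|x|^2-1)H\langle x,\nu\rangle.
\]

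The genuine gap is your second step. You propose to conclude by showing that the weight $\Psi$ ($=\gamma$) is nonnegative via the elementary bounds $|x|\le 1$ and $\langle x,\nu\rangle^2\le|x|^2$. This does not work: as the paper explicitly notes after \eqref{add_eq}, $\gamma$ need not have a sign (for general $\theta$ and $H>0$ the three terms compete and no pointwise algebraic inequality of the type you describe forces $\gamma\ge 0$). Your own caveat that this is ``the main obstacle'' is well placed, but ``careful bookkeeping of the cross terms'' will not close it.

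What the paper does instead is introduce the auxiliary function
\[
\Phi=\tfrac12(|x|^2-1)H-n(\langle x,\nu\rangle+\cos\theta),
\]
which satisfies $\Delta\Phi=(n|h|^2-H^2)\langle x,\nu\rangle$ by \eqref{eq-x2}, \eqref{eq-xnu}, and $\Phi|_{\partial M}=0$. Hence $\int_M\Delta\tfrac12\Phi^2=0$. Adding this null quantity to \eqref{stab-eq3} and expanding $\Delta\tfrac12\Phi^2=\Phi\Delta\Phi+|\nabla\Phi|^2$, the cross terms combine with $\gamma$ to produce the \emph{manifestly} nonnegative integrand
\[
n|x^T|^2(n|h|^2-H^2)+|\nabla\Phi|^2,
\]
where $x^T$ is the tangential part of $x$. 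From this one gets $|x^T|^2(n|h|^2-H^2)=0$ and $\nabla\Phi=0$; the latter makes $\Phi$ constant, so $\Delta\Phi=0$ gives $\langle x,\nu\rangle(n|h|^2-H^2)=0$ as well, and since $|x|^2=|x^T|^2+\langle x,\nu\rangle^2$ one concludes $n|h|^2\equiv H^2$. The introduction of $\Phi$ is the missing idea in your proposal; without it the argument stalls at \eqref{stab-eq3}.

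A minor correction: in the totally umbilical case with $H=0$, the flat disk need not pass through the origin; it lies at signed distance $-\cos\theta$ from the origin, so it passes through the origin only in the free boundary case $\theta=\pi/2$.
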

\begin{proof}
The stability condition states as
\begin{eqnarray}\label{stab-eq1}
&&-\int_M \varphi (\Delta \varphi+|h|^2\varphi)dA+\int_{\p M} \varphi(\n_\mu \varphi- q\varphi) ds\ge 0
\end{eqnarray}
for all function $\varphi\in \mathcal{F}$, where $q$ is given by \eqref{q1}.

For each constant vector field $a\in \rr^{n+1}$, we consider $\varphi_a$, which is defined in Proposition \ref{lem3.2}.
Proposition \ref{lem3.2} implies that 
 $\varphi_a \in \mathcal{F}$ and
is an admissible function for testing stability.
Inserting \eqref{eq-phi1} and \eqref{eq-phi} into the stability condition \eqref{stab-eq1}, we get
\begin{eqnarray}\label{stab-eq2}
&&\int_M \left(n\<x+\cos\th\,\nu, a\>-H\<X_a,\nu\>\right)\<x,a\>(n|h|^2-H^2) \, dA\le 0\hbox{ for any }a\in \mathbb{R}^{n+1}.
\end{eqnarray}
We take $a$ to be the $n+1$ coordinate vectors $\{E_i\}_{i=1}^{n+1}$ in $\R^{n+1}$, and add \eqref{stab-eq2} for all $a$ to get
\begin{eqnarray}\label{stab-eq3}
&&\int_M \left(n|x|^2+n\cos\th\<x, \nu\>-\frac12(|x|^2-1)H\<x,\nu\>\right)(n|h|^2-H^2)\, dA\le 0.
\end{eqnarray}
Here we have used $$\sum_{i=1}^{n+1}\<x,E_{i}\>X_{E_i}=\frac12(|x|^2-1)x.$$

Now, if $H=0$ and $\th=\frac{\pi}{2}$, \eqref{stab-eq3} gives  $\int_M |x|^2|h|^2 dA \le 0$, which implies that $h\equiv 0$, i.e., $x: M\to \bar \bb^{n+1}$ is totally geodesic. 
This gives a new proof of a result of Ros-Vergasta \cite{RV}.

When $H\not =0$ or $\th \neq \frac{\pi}{2}$, the proof does not follow from  \eqref{stab-eq3} directly. 
In fact the term 
\begin{equation}\label{add_eq}\gamma:=n|x|^2+n\cos\th\<x, \nu\>-\frac12(|x|^2-1)H\<x,\nu\>
 \end{equation}
may have no definite sign.
In order to handle this problem, we introduce the following  function 
$$\Phi=\frac12(|x|^2-1)H-n(\<x,\nu\>+\cos \th).$$
Using \eqref{eq-x2} and \eqref{eq-xnu}, one can check that $\Phi$ satisfies
\begin{eqnarray}\label{stab-eq4}
\De \Phi=(n|h|^2-H^2)\<x,\nu\>.
\end{eqnarray}
Since $|x|^2=1$ and $\<x, \nu\>=-\cos \th$ on $\p M$, we have $\Phi=0$ on $\p M$. Consequently,
\begin{eqnarray}\label{eq-Phi}
\int_M \De \frac12\Phi^2=\int_{\p M} \Phi \n_\mu\Phi \,dA=0.
\end{eqnarray}
Adding \eqref{eq-Phi} to \eqref{stab-eq3} and using \eqref{stab-eq4}, we obtain
\begin{eqnarray*}
0&\ge &
\int_M \left(n(|x|^2+\cos \th \<x, \nu\>)-\frac12(|x|^2-1)H\<x,\nu\>\right)(n|h|^2-H^2)+ \De \frac12\Phi^2  \, dA
\\&=&\int_M \left(n(|x|^2+ \cos \th \<x, \nu\>)-\frac12(|x|^2-1)H\<x,\nu\>\right)(n|h|^2-H^2)+ \Phi\De\Phi+|\n \Phi|^2  \, dA
\\&=&\int_M n|x^T|^2(n|h|^2-H^2)+|\n \Phi|^2  \, dA
\\&\ge &0,
\end{eqnarray*}
where $x^T$ is the tangential part of $x$.
The last inequality holds since $n|h|^2\ge H^2$ which follows from Cauchy's inequality.
It follows that 
 $|x^T|^2(n|h|^2-H^2)=0$ on $M$ and $\n \Phi=0$. The latter implies that $\Phi$ is a constant. This fact, together with \eqref{stab-eq4}, implies that
 $\< x, \nu\> (n|h|^2-H^2)=0$ on $M$. Together with $|x^T|^2(n|h|^2-H^2)=0$, it implies that $|x|^2(n|h|^2-H^2)=0$ on $M$.  Hence we have 
 $n|h|^2-H^2=0$ on $M$, which means that $M$ is umbilical and  is a spherical cap. The proof is completed.
 \end{proof}

\begin{remark} In the case of free boundary, i.e., $\cos \theta =0$, 
 Barbosa \cite{Barbosa} proved that $\<x, \nu\> $ has a fixed sign, namely, 
  $\<x, \nu\> \le 0 $ in $M$ in our notation. By our convention of the choice of $\nu$, we have $H>0$. It is not clear whether $\gamma$ has a sign from these information. However, one can show 
  the non-negativity of $\gamma$ with the help of $\Phi$ used in the proof as follows.
  In this case, by \eqref{stab-eq4} we have $\Delta \Phi\le 0$ in $M$ and $\Phi=0$ on $\p M$, which implies that $\Phi \ge 0$ by the maximum principle, and hence $-\<x, \nu\> \Phi \ge 0$ in $M$. 
  It follows that 
  $$\gamma \ge n\<x, \nu\>^2 -\frac 12 (|x|^2-1) H\<x, \nu\> = -\<x, \nu\> \Phi \ge 0.$$
 Therefore, in the case of free boundary, with the help of  the non-negativity of $\gamma$, we can get $n|h|^2-H^2=0$ from \eqref{stab-eq3}.
 \end{remark}
 
 \begin{remark} 
Since the new Minkowski formula holds also for closed hypersurfaces in $\R^{n+1}$, (Remark \ref{rmk3.2}),  the above proof for the stability of capillary
surfaces works without any changes for closed hypersurfaces. This means that we give a new proof of the result of Barbosa-do Carmo \cite{BDC} mentioned above.
This works also for closed hypersurfaces in space forms. See the next section.
\end{remark}
 

\medskip

\section{Capillary Hypersurfaces in a Ball in Space Forms}\label{sec4}

In this section we handle the case when $\bar M$ is a space form $\hh^{n+1}$ or $\ss^{n+1}$ and $B$ is a ball in $\bar M$. 
Since these two cases are quite similar, we will prove the hyperbolic case and indicate the minor modifications for the spherical case in Subsection 4.3 below.

\subsection{A new Minkowski type formula in $\hh^{n+1}$}\

Let $\hh^{n+1}$ be the simply connected hyperbolic space with curvature $-1$. We use here the  Poincar\'e ball model, which  is given by
\begin{eqnarray}\label{Poincare}
\hh^{n+1}=\left(\bb^{n+1}, \bar g=e^{2u} \delta\right), \quad e^{2u}=\frac{4}{(1-|x|^2)^2}.
\end{eqnarray}
One can also use other models. The advantage to use the  Poincar\'e ball model for us is that for this model it is relatively easy to find the corresponding conformal Killing vector field $X_a$.

In this section we use $\delta$ or $\<\cdot, \cdot\>$ to denote the Euclidean metric 
and the Cartesian coordinate in $\bb^{n+1}\subset \rr^{n+1}$.
Sometimes we also  represent the hyperbolic metric, in terms of the polar coordinate with respect to the origin, as $$\bar g=dr^2+\sinh^2 rg_{\ss^n}.$$ 
We use $r=r(x)$ to denote the hyperbolic distance from the origin and denote $V_0=\cosh r$.
It is easy to verify that  \begin{eqnarray}\label{hypfunc}
V_0=\cosh r=\frac{1+|x|^2}{1-|x|^2},  \quad \sinh r=\frac{2|x|}{1-|x|^2}.
\end{eqnarray}
 The position function $x$, in terms of polar coordinate, can be represented by \begin{eqnarray}\label{rad-conf0}
x=\sinh r \p_r. 
\end{eqnarray}
It is well-known that $x$ is a conformal Killing vector field with \begin{eqnarray}\label{rad-conf}
\bar\n x=V_0 \bar g.
\end{eqnarray}

Let $B^\hh_{R}$ be a ball in $\hh^{n+1}$ with hyperbolic radius ${R}\in (0, \infty)$. By an isometry of $\hh^{n+1}$, we may assume $B^{\hh}_{R}$ is centered at the origin. 
$B^\hh_{R}$, when viewed as a set in $\bb^{n+1}\subset \rr^{n+1}$,  is the Euclidean ball with radius $R_{\rr}:=\sqrt{\frac{1-\arccosh R}{1+\arccosh R}} \in (0, 1)$. 
The principal curvatures of $\p B^\hh_{R}$ are $\coth R$. The unit normal $\bar N$ to $\p B^{\hh}_R$ with respect to $\bar g$ is given by \begin{eqnarray*}
\bar N=\frac{1}{\sinh R}x.
\end{eqnarray*}

As in the Euclidean case, for each constant vector field $a\in \mathbb{R}^{n+1}$, define a corresponding smooth vector field $X_a$ in $\hh^{n+1}$ by
\begin{eqnarray}\label{ConfKillv-h}
X_a= \frac{2}{1-R_{\rr}^2}\left[\<x, a\>x-\frac12(|x|^2+R_\rr^2)a\right].
\end{eqnarray}
Moreover, we define another smooth vector field $Y_a$ in $\hh^{n+1}$ by
\begin{eqnarray}\label{KillY}
Y_a=\frac12(|x|^2+1)a-  \<x, a\>x.
\end{eqnarray}

\begin{prop}\label{lem1h}\
\begin{enumerate}
\item[(i)] $X_a$ is a conformal Killing vector field in $\hh^{n+1}$ with
\begin{eqnarray}\label{confKillhyper1}
\frac12(\bar \n_i (X_a)_j+ \bar \n_j (X_a)_i)=V_{a}\bar g_{ij}, \hbox{ where }V_{a}= \frac{2\<x, a\>}{1-|x|^2}.
\end{eqnarray}
\item[(ii)] $X_a|_{\p B_R^{\hh}}$ is a tangential vector field on $\p B^{\hh}_{R}$. In particular, $$\bar g(X_a,\bar N)=0.$$
\item[(iii)] $Y_a$ is a Killing vector field in $\hh^{n+1}$, i.e.,
\begin{eqnarray}\label{KillY1}
\frac12(\bar \n_i (Y_a)_j+ \bar \n_j (Y_a)_i)=0.
\end{eqnarray}
\end{enumerate}
\begin{remark} $Y_a= \lim _{R_\R\to 1} (R_{\R}-1)X_a$. Though $X_a$ and $Y_a$ look very similar, they are quite different.
$Y_a$ is the Killing vector field induced by the isometry of ``translation'' in $\hh^{n+1}$, while
$X_a$ is a special conformal vector field added by a translation as in the Euclidean case.
For our purpose, $Y_a$ in $\hh^{n+1}$ plays a 
similar role as a constant vector field $a$ in $\rr^{n+1}$.
\end{remark}
\end{prop}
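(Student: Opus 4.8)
The plan is to reduce everything to the conformal relation $\bar g=e^{2u}\delta$ of the Poincar\'e model \eqref{Poincare}, combined with the elementary facts about the Euclidean fields already recorded in Section \ref{sec3}. The conceptual key is the conformal invariance of (conformal) Killing fields: for any vector field $Z$ on $\bb^{n+1}$,
\[
\mathcal{L}_Z\bar g=e^{2u}\bigl(\mathcal{L}_Z\delta+2(Zu)\delta\bigr),
\]
so that if $\tfrac12(\bar\nabla_iZ_j+\bar\nabla_jZ_i)=\lambda\,\delta_{ij}$ with respect to $\delta$, then $\tfrac12(\bar\nabla_iZ_j+\bar\nabla_jZ_i)=(\lambda+Zu)\,\bar g_{ij}$ with respect to $\bar g$. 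Since $e^{u}=\tfrac{2}{1-|x|^2}$, the Euclidean gradient is $\nabla u=\tfrac{2x}{1-|x|^2}$, so the correction term $Zu$ is completely explicit, and the whole proof amounts to checking that the corrected conformal factors are the asserted ones.

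For (i): I would first observe that $\tilde X_a:=\langle x,a\rangle x-\tfrac12(|x|^2+R_{\rr}^2)a$ differs from the Euclidean field of \eqref{ConfKillv} only by the constant, hence Killing, term $\tfrac12(1-R_{\rr}^2)a$, so by \eqref{confKill} it is conformal Killing for $\delta$ with factor $\langle x,a\rangle$; since $X_a=\tfrac{2}{1-R_{\rr}^2}\tilde X_a$, its $\delta$-conformal factor is $\lambda=\tfrac{2\langle x,a\rangle}{1-R_{\rr}^2}$. A one-line computation gives $\langle\tilde X_a,x\rangle=\tfrac12\langle x,a\rangle(|x|^2-R_{\rr}^2)$, hence $X_au=\tfrac{2\langle x,a\rangle(|x|^2-R_{\rr}^2)}{(1-R_{\rr}^2)(1-|x|^2)}$, and $\lambda+X_au$ telescopes to $\tfrac{2\langle x,a\rangle}{1-|x|^2}=V_a$, which is exactly \eqref{confKillhyper1}.

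For (ii): on $\p B_R^{\hh}$ one has $|x|^2=R_{\rr}^2$, so the same identity $\langle\tilde X_a,x\rangle=\tfrac12\langle x,a\rangle(|x|^2-R_{\rr}^2)$ already shows $\langle X_a,x\rangle=0$ there; combined with $\bar N=\tfrac{1}{\sinh R}x$ and $\bar g=e^{2u}\langle\cdot\,,\cdot\rangle$ this gives $\bar g(X_a,\bar N)=0$. For (iii): $Y_a=-\bigl(\langle x,a\rangle x-\tfrac12(|x|^2+1)a\bigr)$ is the negative of the Euclidean field \eqref{ConfKillv}, hence conformal Killing for $\delta$ with factor $-\langle x,a\rangle$; since $\langle Y_a,x\rangle=\tfrac12\langle x,a\rangle(1-|x|^2)$ one gets $Y_au=\langle x,a\rangle$, so the $\bar g$-conformal factor of $Y_a$ is $-\langle x,a\rangle+\langle x,a\rangle=0$, i.e.\ $Y_a$ is Killing for $\bar g$, which is \eqref{KillY1}. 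I do not expect a genuine obstacle: the argument is short precisely because the conformal-invariance principle lets the Euclidean computations of Section \ref{sec3} be recycled almost verbatim, and the only real care needed is bookkeeping — keeping the hyperbolic radius $R$ apart from the Euclidean radius $R_{\rr}$, and tracking the convention $e^{2u}=\tfrac{4}{(1-|x|^2)^2}$ when moving scalars across the conformal factor.
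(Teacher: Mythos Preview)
Your proof is correct and follows essentially the same approach as the paper: both use the conformal invariance of (conformal) Killing fields under the conformal change $\bar g=e^{2u}\delta$, then compute the correction term $Zu=\langle Z,\nabla u\rangle_\delta$ explicitly to identify the new conformal factor. The only cosmetic difference is that the paper packages the correction as the divergence identity $\div_{\bar g}Z=\div_\delta Z+(n+1)\,du(Z)$ and reads off the factor as $\tfrac{1}{n+1}\div_{\bar g}Z$, whereas you write it via the Lie derivative formula $\mathcal{L}_Z\bar g=e^{2u}\bigl(\mathcal{L}_Z\delta+2(Zu)\delta\bigr)$ and recycle Proposition~\ref{ConfK} from Section~\ref{sec3}; the computations line up exactly.
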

\begin{proof} These are known facts. For the convenience of reader we give a proof.

(i) Recall that $X_a$ is a conformal Killing vector field in  the Euclidean unit ball $\bb^{n+1}$ with respect to the Euclidean metric (Proposition \ref{ConfK}). A well known fact is that a conformal Killing vector field is still a conformal one with respect to a conformal metric, see e.g. \cite{BE}. To be precise,
\begin{eqnarray*}
\frac12(\bar \n_i (X_a)_j+ \bar \n_j (X_a)_i)=\frac{1}{n+1}\div_{\bar g}(X_a)\bar g_{ij},
\end{eqnarray*}
where
\begin{eqnarray*}
\div_{\bar g}(X_a)&=&\div_\delta(X_a)+(n+1) du(X_a)
\\&=&\frac{2}{1-R_\rr^2}(n+1)\<x, a\>+(n+1)\left\<\frac{2x}{1-|x|^2}, \frac{2}{1-R_{\rr}^2}\left[\<x, a\>x-\frac12(|x|^2+R_\rr^2)a\right]\right\>
\\&=&(n+1)\frac{2\<x, a\>}{1-|x|^2}.
\end{eqnarray*}

(ii) This is because $\<X_a, x\>|_{\p B^{n+1}_{R_\rr}}=0$ in  the Euclidean metric and the fact that a conformal transformation preserves the angle.

(iii) As in (i), we know that $Y_a$ is a conformal Killing vector field in $\bb^{n+1}$ with respect to the Euclidean metric. Thus $Y_a$ is again  a conformal Killing one with respect to the conformal metric $\bar g$ with
\begin{eqnarray*}
\frac12(\bar \n_i (Y_a)_j+ \bar \n_j (Y_a)_i)=\frac{1}{n+1}\div_{\bar g}(Y_a)\bar g_{ij},
\end{eqnarray*}
where
\begin{eqnarray*}
\div_{\bar g}(Y_a)&=&\div_\delta(Y_a)+(n+1) du(Y_a)
\\&=& -(n+1)\<x, a\>-(n+1)\left\<\frac{2x}{1-|x|^2}, \<x, a\>x-\frac12(|x|^2+1)a\right\>
\\&=&0.
\end{eqnarray*}
\

\end{proof}


\begin{prop}\label{lem2h} The functions $V_0$ and $V_a$ satisfy
\begin{eqnarray}
&&\bar \n^2 V_0=V_0 \bar g,\label{V0}
\\&&\bar \n^2 V_{a}= V_{a} \bar g.\label{Va}
\end{eqnarray}
\end{prop}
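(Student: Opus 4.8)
The plan is to compute the Hessian of $V_0=\cosh r$ directly, then deduce the formula for $V_a$ by exploiting the fact that $V_a$ is, up to the isometry group of $\hh^{n+1}$, the same type of function as $V_0$. First I would establish \eqref{V0}. Using the polar-coordinate form $\bar g=dr^2+\sinh^2 r\,g_{\ss^n}$, the nonzero Christoffel symbols are the standard ones for a warped product: $\bar\Gamma^r_{ij}=-\sinh r\cosh r\,(g_{\ss^n})_{ij}$ on the sphere factor and $\bar\Gamma^k_{ri}=\coth r\,\delta^k_i$. Since $V_0$ depends only on $r$, with $V_0'=\sinh r$ and $V_0''=\cosh r$, one gets $\bar\n^2_{rr}V_0=V_0''=\cosh r=V_0$, the mixed components $\bar\n^2_{ri}V_0=0$, and on the sphere directions $\bar\n^2_{ij}V_0=-\bar\Gamma^r_{ij}V_0'=\sinh^2 r\cosh r\,(g_{\ss^n})_{ij}=V_0\,\bar g_{ij}$. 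Hence $\bar\n^2 V_0=V_0\,\bar g$, which is \eqref{V0}.

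Next I would obtain \eqref{Va} by a symmetry argument rather than a second coordinate computation. Observe that $V_a=\frac{2\<x,a\>}{1-|x|^2}$ is exactly the function $V_0$ (distance-cosh from the origin) recentered: more precisely, one checks that for a suitable hyperbolic isometry $\Psi_a$ — the ``translation'' whose Killing field is $Y_a$ from \eqref{KillY}, or rather a combination selecting the direction $a$ — we have $V_a=\cosh\big(\operatorname{dist}_{\hh}(\,\cdot\,,p_a)\big)$ up to scaling, for an appropriate point $p_a$; equivalently $V_a$ lies in the finite-dimensional eigenspace spanned by $\{\cosh r, \sinh r\,\omega^1,\dots,\sinh r\,\omega^{n+1}\}$ (coordinates of the hyperboloid model), every element of which satisfies $\bar\n^2 u=u\,\bar g$. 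Since $\bar\n^2 u=u\,\bar g$ is an isometry-invariant equation and $\bar\n^2 V_0=V_0\,\bar g$ holds, pulling back by $\Psi_a$ gives $\bar\n^2 V_a=V_a\,\bar g$. Alternatively, and perhaps cleaner for the write-up, I would verify \eqref{Va} by a direct conformal computation: using the conformal factor $e^{2u}=\frac{4}{(1-|x|^2)^2}$ and the standard transformation law for the Hessian under a conformal change $\bar g=e^{2u}\delta$, namely $\bar\n^2 f=\n^2_\delta f-2\,du\odot df+\<\n_\delta u,\n_\delta f\>_\delta\,\delta$, applied to $f=V_a$. One knows $V_a=e^u\<x,a\>\cdot(\text{const})$ in a convenient normalization; since $\n^2_\delta\<x,a\>=0$, the Euclidean Hessian of $V_a$ reduces to derivatives of $e^u$, and the conformal correction terms are designed precisely to collapse everything to $V_a\,e^{2u}\delta=V_a\,\bar g$.

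The main obstacle I anticipate is purely bookkeeping: keeping the conformal Hessian formula's sign conventions straight and carrying the $(1-|x|^2)$ denominators through the Euclidean derivatives of $e^u=\frac{2}{1-|x|^2}$ without error. Nothing here is conceptually deep — both $V_0$ and $V_a$ are ``linear functions'' on the hyperboloid model, and such functions are exactly the solutions of $\bar\n^2 u=u\,\bar g$ — but the conformal-ball-model presentation forces a somewhat unpleasant explicit computation. If I wanted to minimize computation in the final text, I would prove \eqref{V0} by the polar-coordinate computation above, then deduce \eqref{Va} in one line by noting $V_a\circ\Psi_a^{-1}=V_0$ for the isometry $\Psi_a$ and invoking isometry-invariance of the equation; the reader can check $\Psi_a$ exists since $Y_a$ (Proposition \ref{lem1h}(iii)) generates the relevant one-parameter group of translations.
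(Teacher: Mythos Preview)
Your computation for \eqref{V0} is fine. For \eqref{Va}, however, the isometry shortcut you propose at the end has a real gap: there is no isometry $\Psi_a$ of $\hh^{n+1}$ with $V_a\circ\Psi_a^{-1}=V_0$. In the hyperboloid model, $V_0$ is the restriction of a \emph{timelike} linear functional ($y\mapsto y_0$) while $V_a=\sinh r\,\<\omega,a\>$ is the restriction of a \emph{spacelike} one ($y\mapsto\sum_i a_iy_i$); the isometry group preserves causal type, so these lie in distinct orbits even after rescaling. Concretely, the flow of the Killing field $Y_a$ carries $V_0$ to $\cosh t\cdot V_0+\sinh t\cdot V_{a/|a|}$, never to a multiple of $V_a$ alone; and your claim that $V_a$ equals $\cosh\operatorname{dist}_{\hh}(\cdot,p_a)$ up to scaling fails because the latter is everywhere positive whereas $V_a$ vanishes on a totally geodesic hyperplane. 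The easy repair is already implicit in your sketch: since $V_a=\sinh r\,\<\omega,a\>$ in polar coordinates, simply repeat your warped-product Hessian computation for $u=\sinh r\cdot\omega^i$ (same Christoffel symbols, two more lines) and conclude \eqref{Va} by linearity.

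By contrast, the paper's proof of \eqref{Va} takes a genuinely different route. It avoids polar coordinates and the hyperboloid model entirely: it first obtains $\bar\De V_a=(n+1)V_a$ from the conformal Laplacian, then writes $(n+1)V_a=\bar\n_k(X_a)^k$ via the conformal-Killing identity \eqref{confKillhyper1} and differentiates twice, commuting covariant derivatives through the curvature tensor of $\hh^{n+1}$ to isolate $\bar\n_i\bar\n_jV_a$. Your approach (once repaired) is more elementary and geometric; the paper's approach stays entirely within the conformal-Killing framework it has already built around $X_a$.
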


\begin{proof}Identity \eqref{V0} is clear because $V_0=\cosh r$. We verify next \eqref{Va}.
Using the conformal transformation law of the Laplacian,
one can compute directly that 
\begin{eqnarray}\label{deltaV}
&&\bar \Delta V_{a}=e^{-2u}(\Delta_\delta V_{a}+(n-1)du(V_{a}))=(n+1)V_{a}.
\end{eqnarray}
Using \eqref{confKillhyper1} and the commutation formula
$$\bar R_{ijkl}=\bar g(\bar R(\p_i,\p_j)\p_k, \p_l)=\bar g(\bar\n_i\bar \n_j \p_k-\bar\n_j\bar \n_i \p_k, \p_l)$$
and 
$$\bar R_{ijkl}=-(g_{il}g_{jk}-g_{ik} g_{jl}),$$
we compute
\begin{eqnarray*}
(n+1)\bar \n_{i}\bar \n_j V_a&=& \bar \n_i\bar \n_j \bar \n_k (X_a)^k
\\&=& \bar \n_i (\bar \n_k \bar \n_j (X_a)^k+(X_a)^l\bar R_{jkl}^{\ \ \ k})
\\&=&\bar \n_i \bar \n_k (2V_a\delta_j^k- \bar \n^k (X_a)_j)+n \bar \n_i (X_a)_j
\\&=&2 \bar \n_{i}\bar \n_j V_a-\bar \n_i\bar \n_k \bar \n^k (X_a)_j+n \bar \n_i (X_a)_j.
\end{eqnarray*}
Further,
\begin{eqnarray*}
\bar \n_i \bar \n_k \bar \n^k (X_a)_j&=&\bar \n_k \bar \n_i \bar \n^k (X_a)_j-\bar \n^k (X_a)_l\bar R_{ik\ j}^{\ \ l}+\bar \n^l (X_a)_j\bar R_{ikl}^{\ \ \ k}
\\&=&\bar \n_k (\bar \n^k \bar \n_i (X_a)_j+ (X_a)_l \bar R_{i\ \ j}^{\ kl})+\bar \n^k (X_a)_l \bar R_{ik\ j}^{\ \ l}+\bar \n^l (X_a)_j \bar R_{ikl}^{\ \ \ k}
\\&=&\bar \De \bar \n_i (X_a)_j-2\bar \n^k(X_a)_l (\bar g_{ij}\d^l_k- \d_i^l \bar g_{kj})+\bar \n^l (X_a)_j n\bar g_{li}
\\&=&\bar \De \bar \n_i (X_a)_j-2 \bar{{\rm div}}(X_a) \bar g_{ij}+ 2\bar \n_j(X_a)_i+n\bar \n_i (X_a)_j
\end{eqnarray*}
and
\begin{eqnarray}\label{vij}
&&(n-1)\bar \n_{i}\bar \n_j V_a=- \bar \De \bar \n_i (X_a)_j+  2\bar{{\rm div}}(X_a) \bar g_{ij}-2\bar \n_j(X_a)_i.
\end{eqnarray}
Commutating the indices $i$ and $j$ in \eqref{vij}, summing up,  and using \eqref{deltaV} we obtain
\begin{eqnarray*}
2(n-1)\bar \n_{i}\bar \n_j V_a&=&- \bar \De (\bar \n_i (X_a)_j+\bar \n_j (X_a)_i)+ 4\bar{{\rm div}}(X_a) \bar g_{ij}-2(\bar \n_i (X_a)_j+\bar \n_j (X_a)_i)
\\&=&-2\bar \De V_a \bar g_{ij}+  4(n+1)V_a\bar g_{ij}-4V_a \bar g_{ij}
\\&=& 2(n-1)V_a \bar g_{ij}.
\end{eqnarray*}
Identity \eqref{Va} follows.
\end{proof}
\begin{remark}\label{rem-classi}
We remark that in $\hh^{n+1}$, the vector space $\{V\in C^2(\hh^{n+1}): \bar \n^2 V=V\bar g\}$ is spanned by $V_0$ and $V_a, a\in \rr^{n+1}$. Thus it has dimension $n+2$. 
\end{remark}

Note that the vector field $a$ is not a constant (or parallel) with respect to the hyperbolic metric.  In the following we derive formulae of covariant derivatives of several functions and vector fields associated with $a$. We will frequently use \eqref{Poincare} and \eqref{hypfunc}.
\begin{prop}For any tangential vector field $Z$ on $\hh^{n+1}$, 
\begin{eqnarray}
&&\bar \n_{Z}a=e^{-u}\left[\bar g(x, Z)a+\bar g(x, a)Z- \bar g(Z, a)x\right], \label{eq1h}
\\&&\bar \n_{Z}(e^{-u}a)=e^{-u}\left[\bar g(x, e^{-u}a)Z- \bar g(Z, e^{-u}a)x\right].\label{eq2h}
\\&&\bar \n_{Z} V_0= \bar g(x, Z),\label{eq3h'}
\\&&\bar \n_{Z}V_a=\bar g(Z, e^{-u}a)+ e^{-u}\bar g(x, e^{-u}a)\bar g(Z, x),\label{eq3h}
\\&&\bar \n_{Z} Y_a=e^{-u} \bar g(x, Z) a- e^{-u} \bar g(Z, a)x.\label{eq4h}
\\&&\bar \n_{Z} X_a=-\cosh R[e^{-u} \bar g(x, Z) a- e^{-u} \bar g(Z, a)x] +e^{-u}\bar g(x, a)Z.\label{eq5h}
\end{eqnarray}
\end{prop}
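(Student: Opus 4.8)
The plan is to deduce all six identities from a single master formula, the transformation law for the Levi‑Civita connection under the conformal change $\bar g=e^{2u}\delta$ of \eqref{Poincare}, and then to feed in the two facts already recorded in this section: $\bar\n_Z x=V_0 Z$ (from \eqref{rad-conf}) and $\bar\n\cosh r=x$ (from \eqref{rad-conf0}). Writing $D$ for the flat connection on $\rr^{n+1}$ and $\n^\delta$ for the Euclidean gradient, the standard conformal formula is $\bar\n_Z W=D_Z W+du(Z)W+du(W)Z-\delta(Z,W)\,\n^\delta u$. Since $u=\log\frac{2}{1-|x|^2}$, a one‑line computation gives $\n^\delta u=\frac{2x}{1-|x|^2}$, hence $du(Z)=\frac{2\langle x,Z\rangle}{1-|x|^2}=e^{-u}\bar g(x,Z)$ and $\delta(Z,W)\,\n^\delta u=e^{-u}\bar g(Z,W)\,x$, where I use $e^{-u}=\tfrac12(1-|x|^2)$ and $\bar g=e^{2u}\delta$. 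This collapses the formula to
\begin{equation}\label{master}
\bar\n_Z W=D_Z W+e^{-u}\left[\bar g(x,Z)W+\bar g(x,W)Z-\bar g(Z,W)x\right].
\end{equation}

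With \eqref{master} in hand, \eqref{eq1h} is just the case $W=a$, since a constant vector field is $D$‑parallel; \eqref{eq2h} then follows from \eqref{eq1h} by the Leibniz rule together with $Z(e^{-u})=-e^{-u}du(Z)=-e^{-2u}\bar g(x,Z)$. Identity \eqref{eq3h'} is immediate: $V_0=\cosh r$ and $\bar\n\cosh r=\sinh r\,\p_r=x$ give $\bar\n_Z V_0=\bar g(x,Z)$. For \eqref{eq3h} I would first record the direct identity $V_a=\bar g(x,e^{-u}a)$ (a one‑line check from \eqref{confKillhyper1} and \eqref{hypfunc}) and then apply the product rule using $\bar\n_Z x=V_0 Z$, \eqref{eq2h}, and $\bar g(x,x)=\sinh^2 r=V_0^2-1$; this yields $\bar\n_Z V_a=[V_0-e^{-u}(V_0^2-1)]\bar g(Z,e^{-u}a)+e^{-u}\bar g(x,e^{-u}a)\bar g(x,Z)$, and the bracket equals $1$ after substituting \eqref{hypfunc}. (Alternatively one may simply differentiate $V_a=\frac{2\langle x,a\rangle}{1-|x|^2}$ in Cartesian coordinates.)

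For \eqref{eq4h} I would apply \eqref{master} to the polynomial field $Y_a$ of \eqref{KillY}, for which $D_Z Y_a=\langle x,Z\rangle a-\langle Z,a\rangle x-\langle x,a\rangle Z$; after substituting $\bar g(x,Y_a)=e^{u}\langle x,a\rangle$ and the analogous expression for $\bar g(Z,Y_a)$ and then grouping the resulting terms according to whether their vector part is $a$, $x$, or $Z$, the $Z$‑terms cancel, the mixed $\langle x,Z\rangle\langle x,a\rangle$ terms attached to $x$ cancel, and the coefficients of $a$ and of $x$ collapse to $\pm e^{-u}$ times the quantities in \eqref{eq4h}; consistency with $Y_a$ being Killing, \eqref{KillY1}, is visible since the right‑hand side of \eqref{eq4h} is manifestly of Killing type. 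Finally \eqref{eq5h} requires no further use of \eqref{master}: comparing \eqref{ConfKillv-h} with \eqref{KillY} and using $1+\cosh R=\frac{2}{1-R_\rr^2}$ (which is \eqref{hypfunc} evaluated on $\p B^\hh_R$), one obtains the purely algebraic relation $X_a=a-(1+\cosh R)Y_a$; substituting \eqref{eq1h} and \eqref{eq4h} and collecting terms gives \eqref{eq5h}.

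I expect the only real obstacle to be the bookkeeping in \eqref{eq4h}: keeping the conformal factors $e^{\pm u}$ straight and passing between $\delta(\cdot,\cdot)$ and $\bar g(\cdot,\cdot)$ at each step. There is no conceptual difficulty — everything rests on the single formula \eqref{master}, the two ready‑made facts $\bar\n\cosh r=x$ and $\bar\n_Z x=V_0 Z$, and the algebraic identity $X_a=a-(1+\cosh R)Y_a$ — so the proof is a matter of organizing these elementary computations cleanly.
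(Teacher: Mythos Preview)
Your proposal is correct and follows essentially the same route as the paper: both derive and use the conformal transformation law for the Levi--Civita connection (your ``master formula'' is precisely what the paper obtains in the computation of \eqref{eq1h}), then combine it with $\bar\n_Z x=V_0 Z$ and the Leibniz rule to obtain the remaining identities. Your derivation of \eqref{eq5h} via the algebraic relation $X_a=a-(1+\cosh R)Y_a$ is a tidy shortcut compared to the paper's ``similar to \eqref{eq4h}'', but the underlying method is the same.
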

\begin{proof}
Let $\{E_i\}_{i=1}^{n+1}$ be the coordinate unit vector in $\rr^{n+1}$. Let $Z=Z^iE_i$ and $a=a^i E_i$.
Under the conformal transformation,
\begin{eqnarray*}
\bar \n_{E_i} E_j&=&E_i(u) E_j+E_j(u) E_i- \<E_k(u), E_k\>\delta_{ij}
\\&=&\frac{2}{1-|x|^2}(x_i E_j+ x_j E_i -x\delta_{ij}).
\end{eqnarray*}
It follows that 
\begin{eqnarray*}
\bar \n_{Z}a&=&Z^i a^j\bar \n_{E_i} E_j
\\&=&Z^i a^j\frac{2}{1-|x|^2}(x_i E_j+ x_j E_i -x\delta_{ij})
\\&=&e^{-u}\left[\bar g(x, Z)a+\bar g(x, a)Z- \bar g(Z ,a)x\right],
\end{eqnarray*}
where we have used $e^{-u}=\frac{1-|x|^2}{2}$ and $\bar g=e^{2u}\<\cdot, \cdot\>$.  It is easy to check 
\begin{eqnarray}\label{eq-pf}
\bar \n_Z (e^{-u})=-e^{-u}Z(u) = -e^{-2u}\bar g(x, Z).
\end{eqnarray}
Equation \eqref{eq2h} follows then from \eqref{eq1h} and  \eqref{eq-pf}. Equation \eqref{eq3h'} follow easily from $V_0=\cosh r$ and $x=\sinh r\p_r$.

We rewrite $V_a$ as \begin{eqnarray}\label{eq-pf2}
V_a=\frac{2\<x, a\>}{1-|x|^2}=\bar g(x, e^{-u}a). 
\end{eqnarray}
We compute $V_a$ using  \eqref{eq-pf2}. 
Using \eqref{rad-conf} and \eqref{eq2h}, we get
\begin{eqnarray*}
\bar \n_{Z}  V_a&=&\bar g(\bar \n_{Z}  x,  e^{-u}a)+\bar g(x, \bar \n_{Z}  (e^{-u}a))
\\&=&V_0 e^{-u}\bar g(Z, a)+e^{-2u}\left[\bar g(x, a)\bar g(x, Z)- \bar g(Z, a)\bar g(x, x)\right]
\\&=&e^{-u}\bar g(Z, a)+ e^{-2u}\bar g(x, a)\bar g(Z, x).
\end{eqnarray*}
This is \eqref{eq3h}. In the last equality, we have used $V_0- e^{-u}\bar g(x, x)=\cosh r-\frac{1}{1+\cosh r}\sinh^2 r=1.$ 

Recall $Y_a=\frac12(|x|^2+1)a-\<x, a\>x$.
Using \eqref{eq1h} and \eqref{rad-conf}, we have
\begin{eqnarray*}
\bar \n_{Z} Y_a&=& \<x, Z\>a +\frac12(|x|^2+1)\bar \n_Z a- \<Z, a\>x-\<x, a\>\bar \n_Z x 
\\&=& e^{-2u}\bar g(x, Z) a+\frac12(|x|^2+1)e^{-u}\left[\bar g(x, Z)a+\bar g(x, a)Z- \bar g(Z, a)x\right]
\\&&- e^{-2u} \bar g(Z, a)x- e^{-2u}\bar g(x, a)V_0 Z
\\&=&e^{-u} \bar g(x, Z) a- e^{-u} \bar g(Z, a)x.
\end{eqnarray*}

The proof of equation \eqref{eq5h} is similar to that of \eqref{eq4h}.
\end{proof}

Let $x: M\to B^\hh_R$ be an isometrically immersed hypersurface which intersects $\p B^\hh_R$ at a constant angle $\th$.  As in the Euclidean space, by using  properties of $X_a$ and $Y_a$ in Proposition \ref{lem1h} and the fact that $\p B^\hh_R$ is umbilical in $\hh^{n+1}$, we have the following Minkowski type formula.
\begin{prop}[Minkowski formula]\label{Minksf}  Let $x: M\to B^\hh_R$ be an  isometric immersion into the hyperbolic ball $B^\hh_R$,  whose boundary $\p M$ intersects $\p B^\hh_R$ at a constant angle $\th \in (0, \pi)$. Let $a\in \rr^{n+1}$ be a constant vector field and $X_a$, $Y_a$ are defined by \eqref{ConfKillv-h} and \eqref{KillY}. Then
\begin{eqnarray}\label{Mink1h}
\int_M n (V_a +\sinh R\cos \th \, \bar g(Y_a, \nu)) dA=\int_M H\bar g(X_a, \nu) dA.
\end{eqnarray}
\end{prop}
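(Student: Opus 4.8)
The plan is to carry over the argument of Proposition \ref{MinkEucl} to the Poincar\'e ball model, with $X_a$ and its conformal Killing property \eqref{confKillhyper1} playing the role of the flat conformal field. First I would project $X_a$ onto $M$, writing $X_a^T = X_a - \bar g(X_a,\nu)\nu$, and compute the symmetrized tangential covariant derivative: using $\bar g(\bar \n_{e_\a}\nu,e_\b)=h_{\a\b}$ and \eqref{confKillhyper1} one gets
\begin{equation*}
\tfrac12\big(\n_\a(X_a^T)_\b+\n_\b(X_a^T)_\a\big)=V_a\, g_{\a\b}-h_{\a\b}\,\bar g(X_a,\nu)
\end{equation*}
for a local orthonormal frame $\{e_\a\}$ on $M$ (equivalently one contracts \eqref{eq5h}: the $\bar \n Y_a$-part is trace-free on $TM$, and $e^{-u}\bar g(x,a)=V_a$). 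Tracing gives $\div_M X_a^T = nV_a-H\bar g(X_a,\nu)$, and the divergence theorem yields $\int_M(nV_a-H\bar g(X_a,\nu))\,dA=\int_{\p M}\bar g(X_a,\mu)\,ds$.

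Next I would evaluate the boundary term. Along $\p M\subset\p B^\hh_R$, Proposition \ref{lem1h}(ii) gives $\bar g(X_a,\bar N)=0$, so with the relation $\mu=\sin\th\,\bar N+\cos\th\,\bar\nu$ from \eqref{mu0} we obtain $\bar g(X_a,\mu)=\cos\th\,\bar g(X_a,\bar\nu)$, hence
\begin{equation*}
\int_M\big(nV_a-H\bar g(X_a,\nu)\big)\,dA=\cos\th\int_{\p M}\bar g(X_a,\bar\nu)\,ds.
\end{equation*}
If $\th=\tfrac\pi2$ the right-hand side vanishes and, the $Y_a$-term in \eqref{Mink1h} being absent, we are done.

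For a general contact angle the remaining task is to convert the boundary integral into a bulk integral, i.e.\ to prove $\int_{\p M}\bar g(X_a,\bar\nu)\,ds=-n\sinh R\int_M\bar g(Y_a,\nu)\,dA$. On $\p B^\hh_R$ one has $|x|^2=R_\rr^2$, and then \eqref{ConfKillv-h}, \eqref{KillY} and \eqref{hypfunc} give the boundary relation $X_a=a-(\cosh R+1)Y_a$, together with $\bar N=\tfrac1{\sinh R}x$ and $\bar g(Y_a,x)=V_a$. To turn the boundary integral of $\bar g(Y_a,\bar\nu)$ (and of $\bar g(a,\bar\nu)$) into the desired bulk quantity I would prove the space-form analogue of the Euclidean identities \eqref{AS}--\eqref{AS1}, by applying the divergence theorem on $M$ to a vector field assembled from the Killing field $Y_a$ and the conformal field $x$ (the hyperbolic counterpart of $Z_a$), computing its tangential divergence with \eqref{rad-conf} and \eqref{eq4h}, and simplifying by means of $\bar g(Y_a,x)=V_a$, $\bar g(x,x)=\sinh^2 r$ and the boundary data above. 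Substituting the result into the previous display produces \eqref{Mink1h}.

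The first two steps are routine and parallel the Euclidean proof essentially verbatim; the real work is in the last step, which I expect to be the main obstacle. The difficulty is that $a$ is not parallel in $\hh^{n+1}$, so the divergence computation for the auxiliary vector field generates extra conformal and curvature terms not present in the flat case; these must be shown to cancel, which happens only after one feeds in the precise boundary identities on $\p B^\hh_R$ (notably $e^u=\cosh R+1$ and $x=\sinh R\,\bar N$) and the relations \eqref{V0}, \eqref{Va} for $V_0$ and $V_a$. Carrying the conformal factor $e^u$ correctly through the covariant-derivative formulae \eqref{eq1h}--\eqref{eq5h} is the delicate point.
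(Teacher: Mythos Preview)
Your proposal is correct and follows essentially the same route as the paper: the first two steps (tangential divergence of $X_a^T$ via the conformal Killing identity, then evaluation of the boundary term using $X_a\perp\bar N$ and \eqref{mu0}) match the paper verbatim, and your third step---converting the boundary integral into $\int_M \bar g(Y_a,\nu)\,dA$ via an auxiliary field built from $x$ and $a$---is exactly what the paper does. Two small simplifications are worth noting: first, since $\langle x,\bar\nu\rangle=0$ on $\p M$, both $\bar g(X_a,\bar\nu)$ and $\bar g(Y_a,\bar\nu)$ are already constant multiples of $\bar g(a,\bar\nu)$, so your decomposition $X_a=a-(\cosh R+1)Y_a$ is unnecessary and only one conversion identity is needed; second, the paper's choice $Z_a=\bar g(\nu,e^{-u}a)\,x-\bar g(x,\nu)\,e^{-u}a$ (with the factor $e^{-u}$ attached to $a$, so that \eqref{eq2h} applies) makes the divergence computation $\div_M Z_a=n\,\bar g(Y_a,\nu)$ clean and eliminates the extra conformal terms you were worried about.
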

\begin{proof} Similarly as in the proof of Proposition \ref{MinkEucl}, by using the two properties of $X_a$ in Proposition \ref{lem1h}, we get
 \begin{eqnarray}\label{min1'}
&&\int_M n V_a - H \bar g(X_a,\nu) dA
=\int_M \div_M (X_a^T) dA
\\&=& \int_{\p M} \bar g(X_a^T, \mu) ds=-\frac{2R_\rr^2}{1-R_\rr^2}\cos \th\int_{\p M}\bar g(a, \bar \nu)ds.\nonumber
\end{eqnarray}
Set $$Z_a=\bar g(\nu, e^{-u}a)x- \bar g(x, \nu)(e^{-u}a).$$
We claim that 
\begin{eqnarray}\label{Za}
\div_M Z_a= n\bar g(Y_a, \nu).
\end{eqnarray}
Indeed, by a direct computation we have 
\begin{eqnarray*}
&&\div_M[\bar g(\nu, e^{-u}a)x]= h(a^T, x^T)-e^{-u} \bar g(x^T, e^{-u}a)\bar g(x,\nu)+\bar g(\nu, e^{-u}a)(n V_0 - H\bar g(x, \nu)),
\end{eqnarray*}
and
\begin{eqnarray*}
&&\div_M[\bar g(x, \nu)(e^{-u}a)]
=h(x^T, a^T)+\bar g(x, \nu)[e^{-u}(n\bar g(x, e^{-u}a)-\bar g(x^T, e^{-u}a)) - H\bar g(\nu, e^{-u}a)].
\end{eqnarray*}
It follows that
\begin{eqnarray*}
\div_M Z_a&=& nV_0\bar g(\nu, e^{-u}a)- ne^{-u} \bar g(x, e^{-u}a)\bar g(x, \nu)
\\&=&n\bar g(\nu, \frac12(|x|^2+1)a- \<x, a\>x)
\\&=& n\bar g(Y_a, \nu),
\end{eqnarray*}
where we have used $V_0=\frac{1+|x|^2}{1-|x|^2}$, $e^{-u}=\frac{1-|x|^2}{2}$ and $\bar g=e^{2u}\<\cdot, \cdot\>$. Thus we proved the claim.

Integrating \eqref{Za} over $M$ and using integration by parts, we have
\begin{eqnarray}\label{eqmin1}
\int_M n\bar g(Y_a, \nu) dA= \int_{\p M} \bar g(Z_a, \mu) ds.
\end{eqnarray}
Using \eqref{mu0} and \eqref{nu0}, It is easy to check that 
\begin{eqnarray}\label{eqmin2}
\bar g(Z_a, \mu)|_{\p M}= R_\rr \bar g(\bar \nu, a).
\end{eqnarray}
The Minwokski formual \eqref{Mink1h} follows from \eqref{min1'}, \eqref{eqmin1} and \eqref{eqmin2}.
\end{proof}

\begin{prop}\label{bdrylemma} Along $\p M$, we have
\begin{eqnarray} 
\bar \n_{\mu}(V_a +\sinh R\cos \th \, \bar g(Y_a, \nu)) &=& q (V_a +\sinh R\cos \th \, \bar g(Y_a, \nu)), \label{bdry1-h}
\\\bar \n_{\mu} \bar g(X_a, \nu)&=&q \, \bar g(X_a, \nu), \label{bdry2-h}
\end{eqnarray}
where
\begin{eqnarray}\label{qq2}
&&q=\frac{1}{\sin \th}\coth R+ \cot \th \, h(\mu, \mu).
\end{eqnarray}

\end{prop}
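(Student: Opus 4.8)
The plan is to prove Proposition \ref{bdrylemma} by imitating the Euclidean computation in Proposition \ref{prop3.3}, substituting the hyperbolic building blocks established above: the conformal Killing fields $X_a$, $Y_a$, the function $V_a=\bar g(x,e^{-u}a)$, and the covariant derivative formulae \eqref{eq1h}--\eqref{eq5h}. First I would record the boundary geometry: on $\p M$ we have $\bar N=\frac{1}{\sinh R}x$, so $x=\sinh R\,\bar N=\sinh R(\sin\th\,\mu-\cos\th\,\nu)$ by \eqref{mu0}, \eqref{nu0}, which inverts to $\mu=\frac{1}{\sinh R\sin\th}x+\cot\th\,\nu$. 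Also Proposition \ref{lemma1} applies since $\p B^\hh_R$ is umbilical, giving $\bar\n_\mu\nu=h(\mu,\mu)\mu$ and $h(e,\mu)=0$ for $e\in T(\p M)$. From \eqref{eq3h} with $Z=\mu$ one gets $\bar\n_\mu V_a=\bar g(\mu,e^{-u}a)+e^{-u}\bar g(x,e^{-u}a)\bar g(\mu,x)$; I would simplify $\bar g(\mu,x)=\sinh R\sin\th$ on $\p M$ and collect terms. In parallel, using \eqref{eq4h}, $\bar\n_\mu\bar g(Y_a,\nu)=\bar g(\bar\n_\mu Y_a,\nu)+\bar g(Y_a,\bar\n_\mu\nu)=e^{-u}\bar g(x,\mu)\bar g(a,\nu)-e^{-u}\bar g(\mu,a)\bar g(x,\nu)+h(\mu,\mu)\bar g(Y_a,\mu)$.

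Next I would form the combination $V_a+\sinh R\cos\th\,\bar g(Y_a,\nu)$ and show it equals a clean multiple of $\bar g(\mu,a)$ (up to the conformal factor), just as in the Euclidean case where $\<x+\cos\th\,\nu,a\>=\sin\th\,\<\mu,a\>$; the hyperbolic analogue should read something like $V_a+\sinh R\cos\th\,\bar g(Y_a,\nu)=\sin\th\,\bar g(\mu,e^{-u}a)\cdot(\text{something involving }\cosh R)$ on $\p M$. Then differentiating this identity along $\mu$ and using $\bar\n_\mu\mu=$ (its normal and tangential components with respect to $M$ and $\p B_R^\hh$, handled via the umbilicity of $\p B_R^\hh$ with principal curvature $\coth R$) should produce the factor $q=\frac{1}{\sin\th}\coth R+\cot\th\,h(\mu,\mu)$ in \eqref{qq2}. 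The appearance of $\coth R$ rather than the Euclidean $1$ is exactly the contribution of the second fundamental form of $\p B^\hh_R$; the $\cot\th\,h(\mu,\mu)$ term comes, as before, from $\bar\n_\mu\nu=h(\mu,\mu)\mu$ via Proposition \ref{lemma1}. I would check \eqref{bdry1-h} by matching $\bar\n_\mu$ of the combination against $q$ times the combination.

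For \eqref{bdry2-h} the route is the same: compute $\bar\n_\mu\bar g(X_a,\nu)=\bar g(\bar\n_\mu X_a,\nu)+h(\mu,\mu)\bar g(X_a,\mu)$ using \eqref{eq5h}, evaluate everything on $\p M$ where $\bar g(X_a,\bar N)=0$ (Proposition \ref{lem1h}(ii)) so that $\bar g(X_a,\mu)$ and $\bar g(X_a,\nu)$ are tied together through \eqref{mu0}, \eqref{nu0}, and simplify. One should independently compute $\bar g(X_a,\nu)|_{\p M}$ in closed form and verify that $\bar\n_\mu\bar g(X_a,\nu)$ equals $q$ times it. The main obstacle I anticipate is purely bookkeeping: keeping the three metrics straight (Euclidean $\delta$, hyperbolic $\bar g=e^{2u}\delta$, and the conformal factor $e^{-u}=\tfrac{1-|x|^2}{2}$), correctly restricting the many scalar quantities $|x|^2$, $\bar g(x,x)$, $\bar g(x,\mu)$, $V_0=\cosh r$ to their values on $\p B^\hh_R$ (where $r=R$, $|x|^2=R_\rr^2$), and tracking the constants $\sinh R$, $\cosh R$, $\coth R$, $\frac{2}{1-R_\rr^2}$ that arise. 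A convenient sanity check along the way is to let $R_\rr\to 1$ (equivalently $R\to\infty$) and recover the flat formulae of Proposition \ref{prop3.3} with $\coth R\to 1$, using $Y_a=\lim_{R_\rr\to1}(R_\rr-1)X_a$; this should catch any sign or normalization slip before the final assembly.
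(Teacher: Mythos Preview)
Your plan is correct and matches the paper's approach: compute each derivative via \eqref{eq3h}--\eqref{eq5h} and Proposition \ref{lemma1}, evaluate the combinations on $\p M$ using \eqref{mu0}--\eqref{nu0}, and verify the factor $q$. The one simplification the paper exploits that you left vague is the identity $\bar g(Y_a,x)=V_a$, which yields the clean boundary form $V_a+\sinh R\cos\th\,\bar g(Y_a,\nu)=\sinh R\sin\th\,\bar g(Y_a,\mu)$ (rather than a multiple of $\bar g(\mu,e^{-u}a)$) and makes the comparison with $(\cosh R+\sinh R\cos\th\,h(\mu,\mu))\bar g(Y_a,\mu)$ immediate.
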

\begin{proof}
In this proof we always take value along $\p M$ and use \eqref{mu0} and \eqref{nu0}. 
First, note that
\begin{eqnarray*}
&&\bar g(Y_a, x)= e^{2u}\<Y_a, x\>=e^{2u}\frac12(|x|^2-1)\<x, a\>=e^{-u}\bar g(x, a)=V_a.
\end{eqnarray*}
Thus we have
\begin{eqnarray}\label{bdry-1hh}
V_a+ \sinh R\cos \th \, \bar g(Y_a, \nu)&= &\bar g(Y_a, x+ \sinh R\cos \th\,\nu)
\\&=&\bar g(Y_a, \sinh R \, \bar N+ \sinh R\cos \th\,(-\cos\th \bar N+\sin \th \bar \nu))\nonumber
\\&=&\sinh R\sin \th \bar g(Y_a, \mu).\nonumber
\end{eqnarray}
 By \eqref{eq3h} and \eqref{eq4h}, we compute
\begin{eqnarray*}
&&\bar \n_\mu(V_a+ \sinh R\cos \th \, \bar g(Y_a, \nu))
\\&= &e^{-u}\bar g(\mu, a)+ e^{-2u}\bar g(x, a)\bar g(\mu, x)\nonumber
\\&&+\sinh R\cos \th \, \bar g(Y_a, h(\mu, \mu)\mu)+\sinh R\cos\th \, e^{-u} [\bar g(x,\mu)\bar g(\nu, a)-\bar g(\mu, a)\bar g(x, \nu)].\nonumber\end{eqnarray*}
Using $\nu=-\frac{1}{\cos \th}\bar N+\tan \th\, \mu$ and $x=\sinh R \, \bar N$, we obtain
\begin{eqnarray*}
&&\sinh R\cos\th \, e^{-u} [\bar g(x,\mu)\bar g(\nu, a)-\bar g(\mu, a)\bar g(x, \nu)]
\\&=&\sinh^2 R \, e^{-u}\bar g(\mu, a)-e^{-u} \bar g(x, \mu) \, \bar g(x, a).\nonumber
\end{eqnarray*}
Therefore, we have
\begin{eqnarray}\label{bdry-2hh}
&&\bar \n_\mu(V_a+ \sinh R\cos \th \, \bar g(Y_a, \nu))
\\&= &e^{-u}\cosh^2 R\,\bar g(\mu, a)+  (e^{-2u}-e^{-u})\bar g(x, a)\bar g(x, \mu)+\sinh R\cos \th \, \bar g(Y_a, h(\mu, \mu)\mu)\nonumber
\\&=&\cosh R\, \bar g\left(\frac12(|x|^2+1)a-\<x, a\>x, \mu\right)+\sinh R\cos \th \, \bar g(Y_a, h(\mu, \mu)\mu)
\nonumber
\\&=&(\cosh R+ \sinh R\cos \th \,h(\mu, \mu))  \bar g(Y_a, \mu).\nonumber\end{eqnarray}
The first formula \eqref{bdry1-h} follows from \eqref{bdry-1hh} and \eqref{bdry-2hh}.

Next, using $\bar N=\sin \th \,\mu-\cos\th\,\nu$, we get
\begin{eqnarray}\label{bdry-3hh}
\bar g(X_a, \nu)&=&\frac{2}{1-R_\rr^2}\left[e^{-2u}\bar g(x, a)\bar g(x,\nu)-\frac12(|x|^2+R_\rr^2)\bar g(a, \nu)\right]
\\&=&\frac{2}{1-R_\rr^2}\left[-\cos \th \, R_\rr^2 \bar g(\sin \th \,\mu-\cos\th\,\nu, a)+ R_\rr^2\bar g(a, \nu)\right]\nonumber
\\&=&- \frac{2R_\rr^2}{1-R_\rr^2}\sin \th\,\bar g(\cos \th\, \mu+\sin \th\,\nu, a).\nonumber
\end{eqnarray}
 Since $\nu=-\frac{1}{\cos \th}\bar N+\tan \th\, \mu$ and $X_a\perp \bar N$, we have
\begin{eqnarray}\label{bdry-4hh}
\bar g(X_a, \nu) =\tan \th\, \bar g(X_a, \mu).
\end{eqnarray}
In view of  \eqref{eq5h} and Proposition \ref{lemma1}, we have
\begin{eqnarray}\label{bdry-5hh}
\bar \n_\mu \bar g(X_a, \nu)&=&-\cosh R[e^{-u} \bar g(x, \mu) \bar g(a, \nu)- e^{-u} \bar g(\mu, a)\bar g(x,\nu)]
\\&& +e^{-u}\bar g(x, a)\bar g(\mu, \nu)+\bar g(X_a, h(\mu,\mu)\mu)\nonumber
\\&=&-\cosh R \, R_\rr[\sin \th\, \bar g(a, \nu)+ \cos\th \, \bar g(\mu, a)]+h(\mu,\mu)\bar g(X_a, \mu)\nonumber
\\&=&-\coth R \,  \frac{2R_\rr^2}{1-R_\rr^2}\bar g(\cos \th\, \mu+\sin \th\,\nu, a) +h(\mu,\mu)\bar g(X_a, \mu)\nonumber
\\&=& \nonumber \frac 1 {\sin \theta} \coth R \, \bar g (X_a, \nu) +h(\mu,\mu)\bar g(X_a, \mu),
\end{eqnarray}
where in the last equality, we have used \eqref{bdry-3hh}.
The second assertion \eqref{bdry2-h}  follows. 
The proof is completed.
\end{proof}

\begin{prop}\label{prop4.6} Let $x: M\to (\bb^{n+1}, \bar g)$ be an isometric immersion into the hyperbolic  Poincar\'e ball. Let $a$ be a constant vector field in $\rr^{n+1}$. The following identities hold along $M$:
\begin{eqnarray}
&& \De V_0= n V_0-H\bar g(x, \nu),\label{eq-x0-h}
\\&&\De V_a=n V_a-H \bar \n_\nu V_a,\label{eq-x-h}
\\&&\De\bar g(x, \nu)=HV_0+ \bar g(x, \n H)-|h|^2\bar g(x, \nu), \label{eq-xnu-h}
\\&&\De \bar g(X_a, \nu)=HV_a+ \bar g(X_a, \n H) -|h|^2\bar g(X_a, \nu) -n \bar \n_{\nu} V_a +n\bar g(\nu, X_a), \label{eq-Xnu-h}
\\&&\De \bar g(Y_a, \nu)=  -|h|^2\bar g(Y_a, \nu) +\bar g(Y_a, \n H)  +n\bar g(Y_a, \nu).\label{eq-Ynu-h}
\end{eqnarray}
\end{prop}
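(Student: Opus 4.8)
The plan is to split the five identities into two groups, treating separately the functions $V_0,V_a$ and the support functions $\bar g(x,\nu),\bar g(X_a,\nu),\bar g(Y_a,\nu)$. For \eqref{eq-x0-h} and \eqref{eq-x-h} I would use the elementary restriction identity: for any $f\in C^2(\hh^{n+1})$, along the immersed hypersurface $M$ (with our conventions $h(X,Y)=\bar g(\bar\n_X\nu,Y)$, $H=\tr_g h$),
\[
\Delta f=\bar\Delta f-\bar\n^2 f(\nu,\nu)-H\,\bar g(\bar\n f,\nu).
\]
Since $V_0$ and $V_a$ satisfy $\bar\n^2 V=V\bar g$ by Proposition \ref{lem2h}, this gives $\bar\Delta V=(n+1)V$ and $\bar\n^2 V(\nu,\nu)=V$, hence $\Delta V=nV-H\,\bar g(\bar\n V,\nu)$; combined with $\bar\n V_0=x$ (from \eqref{rad-conf0}) and the formula \eqref{eq3h} for $\bar\n V_a$, this is exactly \eqref{eq-x0-h} and \eqref{eq-x-h}.

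For the support functions I would differentiate $\bar g(W,\nu)$ twice along $M$, with $W\in\{x,X_a,Y_a\}$. A first differentiation via Gauss--Weingarten gives $\nabla_\alpha\bar g(W,\nu)=\bar g(\bar\n_{e_\alpha}W,\nu)+h(e_\alpha,W^{T})$, in which the first term is the antisymmetric (rotational) part of $\bar\n W$ paired with $\nu$; it vanishes when $W=x$, since $\bar\n x=V_0\bar g$, which is why \eqref{eq-xnu-h} is the simplest case, and it supplies the ``extra'' terms when $W=X_a$ or $W=Y_a$. Tracing a second covariant derivative, I would use: the Codazzi equation in the space form $\hh^{n+1}$, which gives $\nabla^\alpha h_{\alpha\beta}=\nabla_\beta H$ and hence the term $\bar g(W,\nabla H)$; the conformal (trace) part of $\bar\n W$, which produces $H\rho$ with $\rho$ the conformal factor ($\rho=V_0,V_a,0$ for $x,X_a,Y_a$) and, contracted with $h$, the term $-|h|^2\bar g(W,\nu)$; the explicit first-order formulas \eqref{eq3h'}, \eqref{eq3h}, \eqref{eq4h}, \eqref{eq5h} of the preceding proposition to handle $\bar\n_{e_\alpha}W^{T}$ and the rotational term; and $\overline{\rm Ric}=-n\,\bar g$ on $\hh^{n+1}$ (equivalently the standard second-order identity for conformal Killing fields, expressing $\bar\n$ of the rotational part in terms of $W$, $\bar\n\rho$ and the ambient curvature), which is what produces the curvature contribution $+n\,\bar g(W,\nu)$ together with $-n\,\bar\n_\nu V_a$ in \eqref{eq-Xnu-h}. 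For $W=x$ everything collapses to \eqref{eq-xnu-h}; for $W=Y_a$ (Killing, $\rho\equiv 0$) only the rotational and curvature terms remain, giving \eqref{eq-Ynu-h}; and for $W=X_a$ one gets \eqref{eq-Xnu-h}, which can be cross-checked via the decomposition $X_a=a-\frac{2}{1-R_\rr^2}Y_a$ with $a$ conformal Killing of factor $V_a$.

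It is perhaps cleaner to package the second group into one lemma: for a conformal Killing field $W$ on $\hh^{n+1}$ with $\mathcal{L}_W\bar g=2\rho\,\bar g$, along $M$
\[
\Delta\,\bar g(W,\nu)=H\rho+\bar g(W^{T},\nabla H)-|h|^2\bar g(W,\nu)+n\,\bar g(W,\nu)-n\,\bar\n_\nu\rho,
\]
and then specialize to $W=x$ ($\rho=V_0$, $\bar\n_\nu\rho=\bar g(x,\nu)$), $W=X_a$ ($\rho=V_a$), $W=Y_a$ ($\rho\equiv 0$). In either route the substance is purely computational: correctly tracking the ambient curvature terms and the covariant derivatives of the non-parallel fields $a$, $X_a$, $Y_a$ in the conformally flat Poincar\'e model --- the identity \eqref{eq-Xnu-h} being the most delicate, as both the conformal and the rotational contributions appear there. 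Leaning on the explicit first-order formulas of the previous proposition, rather than recomputing in ambient Euclidean coordinates, is what keeps the bookkeeping manageable.
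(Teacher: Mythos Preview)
Your proposal is correct and follows essentially the same route as the paper. For \eqref{eq-x0-h}--\eqref{eq-x-h} the paper likewise invokes $\bar\n^2 V=V\bar g$ and the Weingarten formula; for the three support-function identities the paper carries out the frame computation for $\bar g(X_a,\nu)$ directly---using the conformal Killing antisymmetry, the Codazzi identity $(\n_{e_\alpha}\mathcal W)(e_\alpha)=\n H$, and the ambient curvature $-1$ via a curvature-tensor/Koszul step---and then remarks that \eqref{eq-xnu-h} and \eqref{eq-Ynu-h} follow by the same argument with $x$ and $Y_a$. Your packaging of these three cases into the single lemma $\Delta\,\bar g(W,\nu)=H\rho+\bar g(W^{T},\nabla H)-|h|^2\bar g(W,\nu)+n\,\bar g(W,\nu)-n\,\bar\n_\nu\rho$ for any conformal Killing $W$ with $\mathcal L_W\bar g=2\rho\,\bar g$ is exactly what that computation yields, and is a tidy way to record it; the cross-check via $X_a=a-\tfrac{2}{1-R_\rr^2}Y_a$ is a nice addition.
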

\begin{proof} \eqref{eq-x0-h} and \eqref{eq-x-h} follow from \eqref{V0} and \eqref{Va} respectively and the Weingarten formula.

We prove next \eqref{eq-Xnu-h}. Choose an local normal frame $\{e_\a\}_{\a=1}^{n}$ at a given point $p$, i.e., $\n_{e_\a}e_{\b}|_p=0$. Denote by $\mathcal{W}: TM\to TM$ the Weingarten map.
We will frequently use the conformal property \eqref{confKillhyper1} of $X_a$. We compute at $p$,
\begin{eqnarray*}
e_\a \bar g(X_a, \nu)&=&\bar g(X_a, \w(e_\a))+\bar g(\bar \n_{e_\a} X_a, \nu)\\&=& \bar g(X_a, \w(e_\a))-\bar g(\bar \n_{\nu} X_a, e_\a),
\end{eqnarray*}
and
\begin{eqnarray*}
\De \bar g(X_a, \nu)&=&e_\a e_\a  \bar g(X_a, \nu)
\\&=&\bar g(\bar \n_{e_\a} X_a, \w(e_\a))+ \bar g(X_a, \bar \n_{e_\a} (\w(e_\a)))\\&&-\bar g(\bar \n_{e_\a} (\bar \n_{\nu} X_a), e_\a)-\bar g(\bar \n_{\nu} X_a, -H\nu)
\\&=&h_{\a\b}\bar g(\bar \n_{e_\a} X_a, e_\b)+ \bar g(X_a, (\n_{e_\a} \w)(e_\a) -h(e_\a, \w(e_\a))\nu)
\\&&-\bar g(\bar \n_{e_\a} \bar \n_{\nu} X_a, e_\a)+ HV_a
\\&=&HV_a+ \bar g(X_a, \n H) -|h|^2\bar g(X_a, \nu)-\bar g(\bar \n_{e_\a} \bar \n_{\nu} X_a, e_\a).
\end{eqnarray*}
Using the definition of Riemannian curvature tensor and the fact that the ambient space has curvature $-1$, we get
\begin{eqnarray*}
&&-\bar g(\bar \n_{e_\a} \bar \n_{\nu} X_a, e_\a)\\&=&-\bar g(\bar \n_{\nu} \bar \n_{e_\a} X_a, e_\a)+\bar g(\bar \n_{[\nu, e_\a]} X_a, e_\a)+ \bar g(\bar R(\nu, e_\a)X_a, e_\a)
\\&=&-\bar \n_{\nu}\bar g(\bar \n_{e_\a} X_a, e_\a)+\bar g(\bar \n_{e_\a} X_a,  \bar \n_{\nu} e_\a)+\bar g(\bar \n_{[\nu, e_\a]} X_a, e_\a)+ n\bar g(\nu, X_a)
\\&=&-n \bar \n_{\nu} V_a+\bar g(\bar \n_{e_\a} X_a,  \bar \n_{e_\a}\nu+[\nu, e_\a])+\bar g(\bar \n_{[\nu, e_\a]} X_a, e_\a)+ n\bar g(\nu, X_a)
\\&=&-n \bar \n_{\nu} V_a+HV_a  +\bar g([\nu, e_\a], e_\a)V_a+ n\bar g(\nu, X_a).
\end{eqnarray*}
Furthermore the Koszul formula gives
\begin{eqnarray*}
2\bar g(\bar \n_{e_\a} \nu, e_\a)=-\bar g([\nu, e_\a], e_\a)- \bar g([e_\a, e_\a], \nu)+\bar g([e_\a, \nu], e_\a),
\end{eqnarray*}
which implies
\begin{eqnarray*}
\bar g([\nu, e_\a], e_\a)=-H.
\end{eqnarray*}
Combining the above, we get \eqref{eq-Xnu-h}.

By taking account of the fact that $x$ has the conformal Killing property   \eqref{rad-conf} and $Y_a$ has the Killing property \eqref{KillY1}, \eqref{eq-xnu-h} and \eqref{eq-Ynu-h} follow similarly as \eqref{eq-Xnu-h}.
\end{proof}

\subsection{Uniqueness of stable capillary hypersurfaces in a hyperbolic ball}\

\begin{theorem}\label{thm-h} Assume $x: M\to B_R^{\hh}\subset(\bb^{n+1}, \bar g)$ is an immersed stable capillary in the ball $B_R^{\hh}$ with constant mean curvature $H \ge 0$ and constant contact angle $\th\in (0, \pi)$. Then $x$ is totally umbilical.
\end{theorem}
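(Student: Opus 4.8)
The plan is to mimic exactly the argument used in the Euclidean case (Theorem \ref{thm1}), substituting the hyperbolic Minkowski formula \eqref{Mink1h}, the boundary identities of Proposition \ref{bdrylemma}, and the Laplacian identities of Proposition \ref{prop4.6}. First I would, for each constant vector $a\in\rr^{n+1}$, introduce the test function
\[
\varphi_a = n\big(V_a + \sinh R\cos\th\,\bar g(Y_a,\nu)\big) - H\,\bar g(X_a,\nu),
\]
and check that it is admissible for the stability inequality \eqref{stab-ineq}: the Minkowski formula \eqref{Mink1h} gives $\int_M\varphi_a\,dA=0$, while \eqref{bdry1-h} and \eqref{bdry2-h} give $\bar\n_\mu\varphi_a = q\,\varphi_a$ on $\p M$, so the boundary term in the stability quadratic form vanishes. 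Combining the Laplacian formulae \eqref{eq-x-h}, \eqref{eq-Xnu-h}, \eqref{eq-Ynu-h} (the last multiplied by $\sinh R\cos\th$) with the constancy of $H$, one obtains a Schr\"odinger-type identity
\[
\De\varphi_a + |h|^2\varphi_a = (n|h|^2 - H^2)\,V_a,
\]
in complete analogy with \eqref{eq-phi}. Feeding $\varphi_a$ and this identity into the stability inequality $-\int_M\varphi_a(\De\varphi_a+|h|^2\varphi_a)\,dA\ge0$ yields
\[
\int_M \varphi_a\,V_a\,(n|h|^2-H^2)\,dA \le 0 \qquad\text{for all } a\in\rr^{n+1}.
\]

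Next I would sum over $a = E_1,\dots,E_{n+1}$. Using $V_{E_i}=\bar g(x,e^{-u}E_i)$ and the algebraic identities $\sum_i \bar g(x,e^{-u}E_i) X_{E_i} = (\text{multiple of } x)$ and $\sum_i \bar g(x,e^{-u}E_i)Y_{E_i} = (\text{multiple of } x)$ — the hyperbolic analogues of $\sum_i\langle x,E_i\rangle X_{E_i}=\tfrac12(|x|^2-1)x$ — one gets a single scalar inequality $\int_M \Gamma\,(n|h|^2-H^2)\,dA\le0$ for some explicit function $\Gamma$ built from $V_0$, $\bar g(x,\nu)$, $H$ and $R$. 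As in the Euclidean proof, $\Gamma$ need not have a definite sign, so I would introduce an auxiliary function $\Phi$ — the hyperbolic analogue of $\Phi=\tfrac12(|x|^2-1)H - n(\langle x,\nu\rangle+\cos\th)$, presumably something like $\Phi = \big(\text{function of }r\big)H - n(\bar g(x,\nu)+\sinh R\cos\th)$ or phrased via $V_0$ — chosen so that (i) $\De\Phi = (n|h|^2-H^2)\bar g(x,\nu)$ by \eqref{eq-x0-h} and \eqref{eq-xnu-h}, and (ii) $\Phi\equiv0$ on $\p M$ because on $\p M$ one has $r=R$ (so $V_0=\cosh R$) and $\bar g(x,\nu)=-\sinh R\cos\th$. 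Then $\int_M\De(\tfrac12\Phi^2)=\int_{\p M}\Phi\,\bar\n_\mu\Phi=0$, and adding this to the summed stability inequality converts the indefinite bulk term into $\int_M \big(c\,|x^T|_{\bar g}^2\,(n|h|^2-H^2) + |\n\Phi|^2\big)\,dA \le 0$ with $c>0$, forcing $|\n\Phi|=0$ and $|x^T|^2(n|h|^2-H^2)=0$. Constancy of $\Phi$ with its boundary value $0$ gives $\Phi\equiv0$, hence $\bar g(x,\nu)(n|h|^2-H^2)=0$ via $\De\Phi=0$; together with the tangential statement and $|x^T|^2 + \bar g(x,\nu)^2 = |x|_{\bar g}^2 = \sinh^2 r$, which vanishes only at the origin, we conclude $n|h|^2 = H^2$ everywhere, i.e. $x$ is totally umbilical.

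The main obstacle I anticipate is purely computational rather than conceptual: verifying the Schr\"odinger identity for $\varphi_a$ and, especially, pinning down the correct auxiliary function $\Phi$ and the correct sign of the coefficient $c$ in the final rearrangement. In the Euclidean case the cancellations are clean because $a$ is parallel; in $\hh^{n+1}$ the vector $a$ is not parallel, so the extra curvature terms ($+n\bar g(\nu,X_a)$ in \eqref{eq-Xnu-h}, $+n\bar g(Y_a,\nu)$ in \eqref{eq-Ynu-h}) and the conformal factors $e^{-u}=\tfrac12(1-|x|^2)$ must combine just right; the precise constant multiples in $X_a$ and $Y_a$ (the $\tfrac{2}{1-R_\rr^2}$ and the $R_\rr^2$) were presumably chosen for exactly this reason, and a careful bookkeeping of these factors, together with the relation $\cosh R = \tfrac{1+R_\rr^2}{1-R_\rr^2}$, $\sinh R = \tfrac{2R_\rr}{1-R_\rr^2}$, is where the real work lies. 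Everything else is a faithful transcription of the Euclidean argument.
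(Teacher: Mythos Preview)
Your outline matches the paper's proof essentially step for step, including the choice of test function $\varphi_a$, the summation over $a=E_1,\dots,E_{n+1}$, and the auxiliary function $\Phi$ (the paper takes $\Phi=(V_0-\cosh R)H-n(\bar g(x,\nu)+\sinh R\cos\th)$, with summation identities $\sum_i V_{E_i}^2=\bar g(x,x)$, $\sum_i V_{E_i}X_{E_i}=(V_0-\cosh R)x$, $\sum_i V_{E_i}Y_{E_i}=x$, giving $c=n$).

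There is, however, one concrete slip that would make a step fail as written. In $\hh^{n+1}$ one has $\overline{\rm Ric}(\nu,\nu)=-n$, so the Jacobi operator in the stability inequality \eqref{stab-ineq} is $\Delta+|h|^2-n$, not $\Delta+|h|^2$. Correspondingly, the identity you would actually obtain from \eqref{eq-x-h}, \eqref{eq-Xnu-h}, \eqref{eq-Ynu-h} is
\[
\Delta\varphi_a+|h|^2\varphi_a-n\varphi_a=(n|h|^2-H^2)V_a,
\]
not $\Delta\varphi_a+|h|^2\varphi_a=(n|h|^2-H^2)V_a$; if you try to verify the latter you will find a residual $n\varphi_a$ that cannot be removed. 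The two extra $-n\varphi_a$ terms (one in the operator, one in the identity) then cancel against each other when you plug into stability, so your displayed inequality $\int_M \varphi_a V_a(n|h|^2-H^2)\,dA\le 0$ is correct and the rest of the argument proceeds exactly as you describe.
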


\begin{proof}
The stability inequality \eqref{stab-ineq}  reduces to
\begin{eqnarray}\label{stab-eq1-h}
-\int_M \varphi (\Delta \varphi+|h|^2\varphi-n\varphi)-\int_{\p M} (\n_\mu \varphi-q\, \varphi)\varphi\ge 0
\end{eqnarray}
for all function $\varphi\in \mathcal{F}$, where $q$ is given by \eqref{qq2} since $\p B_R^{\hh}$ has constant principal curvature $\coth R$.

For each constant vector field $a\in \rr^{n+1}$, we consider a test function
 $$\varphi_a=n (V_a+\sinh R\cos \th \bar g(Y_a, \nu))-H\bar g(X_a,\nu)$$ along $M$.
The Minkowski type formula \eqref {Mink1h} tells us that $\int_M \varphi_a dA=0$. Therefore, $\varphi_a \in \mathcal{F}$ and
is an admissible function for testing stability.
Using \eqref{eq-x-h}, \eqref{eq-Xnu-h} and \eqref{eq-Ynu-h}, noting that $H$ is a constant,  we easily see that 
\begin{eqnarray}\label{eq-phi-h}
&&\Delta \varphi_a+|h|^2\varphi_a-n\varphi_a=(n|h|^2-H^2)V_a.
\end{eqnarray}
From \eqref{bdry1-h} and \eqref{bdry2-h}, we know  \begin{eqnarray}\label{eq-phi1-h}
\n_\mu \varphi_a-q\varphi_a=0.
\end{eqnarray}

Inserting \eqref{eq-phi-h} and \eqref{eq-phi1-h} into the stability condition \eqref{stab-eq1-h}, we get for any $a\in \mathbb{R}^{n+1}$, 
\begin{eqnarray}\label{stab-eq2'}
&&\int_M \left[n(V_a+\sinh R\cos \th \bar g(Y_a, \nu))-H\bar g(X_a,\nu)\right]V_a(n|h|^2-H^2) \, dA\le 0. 
\end{eqnarray}
We take $a$ to be the $n+1$ coordinate vectors $\{E_i\}_{i=1}^{n+1}$ in $\R^{n+1}$.
Noticing  that $V_a=\frac{2\<x, a\>}{1-|x|^2}$, $X_a=\frac{2}{1-R_\rr^2}\left(\<x, a\>x-\frac12(|x|^2+R_\rr^2)a\right)$ and $Y_a=\frac12(|x|^2+1)a- \<x, a\>x$,
we have \begin{eqnarray*}
&&\sum_{a=1}^{n+1}V_a^2=\frac{4|x|^2}{(1-|x|^2)^2}=\bar g(x, x),
\\&&\sum_{a=1}^{n+1} V_aX_a= \frac{2}{1-R_\rr^2}\frac{|x|^2-R_\rr^2}{1-|x|^2}x=(V_0-\cosh R)x,
\\&&\sum_{a=1}^{n+1} V_a Y_a=x.
\end{eqnarray*}
Therefore, by summing \eqref{stab-eq2'} for all $a$, we get
\begin{eqnarray}\label{stab3''}
&&\int_\S \left[ n(\bar g(x, x)+\sinh R\cos \th \bar g(x,\nu))-(V_0-\cosh R)H\bar g(x,\nu)\right](n|h|^2-H^2)\le 0.
\end{eqnarray}

As in the Euclidean case, we introduce an auxiliary function
\begin{eqnarray*}
\Phi= \left(V_0-\cosh R\right)H-n(\bar g(x,\nu)+\cos \th \sinh R).
\end{eqnarray*}
From \eqref{eq-x0-h} and \eqref{eq-xnu-h}, we get
\begin{eqnarray}\label{stab4'}
\De \Phi=(n|h|^2-H^2)\bar g(x,\nu).
\end{eqnarray}
Note that  $\Phi|_{\p M}=0$. Thus we have
\begin{eqnarray*}
\int_M \De \frac12\Phi^2 dA=\int_{\p M} \Phi \n_\mu\Phi ds=0.
\end{eqnarray*}
Adding this to \eqref{stab3''}, using \eqref{stab4'}, we have
\begin{eqnarray*}
0&\ge&
\int_M \left(n \bar g(x, x)-\left(\cosh r-\cosh R\right)H\bar g(x,\nu)\right)(n|h|^2-H^2)+ \De \frac12\Phi^2
\\&=&\int_M n\bar g(x^T, x^T)(n|h|^2-H^2)+|\n \Phi|^2
\\&\ge &0.
\end{eqnarray*}
The same argument as before yields the umbilicy of the immersion $x$. This implies $x: M\to  B_R^{\hh}$ is either part of a totally geodesic hypersurface  or part of a geodesic ball.
The proof is completed.
\end{proof}

\subsection{The case $\ss^{n+1}$}\

In this subsection, we sketch  the necessary  modifications in the case that the ambient space is the spherical space form $\ss^{n+1}$. 
We use the model $$(\rr^{n+1}, \bar g_{\ss}=e^{2u} \delta) \quad \hbox{ with  }u(x)=\frac{4}{(1+|x|^2)^2},$$to represent $\ss^{n+1}\setminus\{\mathcal{S}\}$, the unit sphere without the south pole.
 Let $B_R^{\ss}$ be a ball in $\ss^{n+1}$ with radius $R\in (0, \pi)$ centered at the north pole. The corresponding $R_\rr=\sqrt{\frac{1-\cos R}{1+\cos R}}\in (0, \infty)$.
The crucial conformal Killing vector field $X_a$ and the Killing vector field $Y_a$ in this case are
\begin{eqnarray}\label{confKillsph}
X_a=\frac{2}{1+R_{\rr}^2}\left[\<x, a\>x-\frac12(|x|^2+R_\rr^2)a\right],
\end{eqnarray}
\begin{eqnarray}\label{Killsph}
Y_a=\frac12(1-|x|^2)a+\<x, a\>x.
\end{eqnarray}
The crucial functions $V_0$ and $V_a$ in this case are
$$V_0= \cos r=\frac{1-|x|^2}{1+|x|^2}, \quad V_a=\frac{2\<x, a\>}{1+|x|^2}.$$ 
Similarly as the hyperbolic case, these $(n+2)$ functions span the vector space  $$\{V\in C^2(\ss^{n+1}\setminus\{\mathcal{S}\}): \bar \n^2 V= - V\bar g\}.$$
Using $X_a$, $Y_a$, $V_0$ and $V_a$, the proof goes through parallel to the hyperbolic case.
The method works for balls with any radius $R\in (0, \pi)$. Compare to the hyperbolic case,  in this case $V_0=\cos r$ can be negative when $R\in (\frac{\pi}{2}, \pi)$. Nevertheless, by going through the proof, we see this does not affect the issue on stability. We leave the details to the interested reader.
\qed


\subsection{Exterior problem}\

To end this section, we give a sketch of proof for the exterior problem, Theorem \ref{thm0.4}. We take  the hyperbolic case as an example.
\begin{theorem}\label{thm-h-ext} Assume $x: M\to \hh^{n+1}\setminus B_R^{\hh}$ is a compact immersed stable capillary hypersurface outside the hyperbolic ball $B_R^{\hh}$ with constant mean curvature $H \ge 0$ and constant contact angle $\th\in (0, \pi)$. Then $x$ is totally umbilical.
\end{theorem}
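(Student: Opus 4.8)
The plan is to mirror the interior argument of Theorem \ref{thm-h} almost verbatim, with the only structural change being the sign of the exterior normal $\bar N$ to the domain boundary $\p B_R^{\hh}$ (now $\bar N = -\frac{1}{\sinh R}x$ when viewed from the exterior region $\hh^{n+1}\setminus B_R^{\hh}$), which propagates into the relations \eqref{mu0}--\eqref{nu0} and hence into the boundary terms $q$. First I would record the modified frame relations: along $\p M$ one still has $\mu = \sin\th\,\bar N + \cos\th\,\bar\nu$ and $\nu = -\cos\th\,\bar N + \sin\th\,\bar\nu$, but now $\bar N$ points into $B_R^{\hh}$, i.e. $x = -\sinh R\,\bar N$ on $\p M$. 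Then Proposition \ref{lemma1} still applies since $\p B_R^{\hh}$ remains umbilical (with principal curvatures now $-\coth R$ relative to the exterior normal), so $\mu$ is still a principal direction and $\bar\n_\mu\nu = h(\mu,\mu)\mu$.

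Next I would re-derive the Minkowski formula. The vector fields $X_a$, $Y_a$, $V_0$, $V_a$ of Section \ref{sec4} are intrinsic to $\hh^{n+1}$ and unchanged; in particular $X_a$ is still conformal Killing and still tangent to $\p B_R^{\hh}$ (tangency does not care which side we are on), so $\bar g(X_a,\bar N)=0$ persists. Running the computation of Proposition \ref{Minksf} with the sign flip in $\bar N$ produces a Minkowski identity of the form
\begin{eqnarray*}
\int_M n\left(V_a - \sinh R\cos\th\,\bar g(Y_a,\nu)\right)dA = \int_M H\,\bar g(X_a,\nu)\,dA,
\end{eqnarray*}
the only change from \eqref{Mink1h} being the sign in front of the $\cos\th$ term. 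Likewise the boundary identities of Proposition \ref{bdrylemma} go through with $q = -\frac{1}{\sin\th}\coth R + \cot\th\,h(\mu,\mu)$, since the curvature of $\p B_R^{\hh}$ seen from outside is $-\coth R$, and the elliptic identities of Proposition \ref{prop4.6} are interior statements that need no modification at all.

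With these in hand the stability argument is identical. I would set
\begin{eqnarray*}
\varphi_a = n\left(V_a - \sinh R\cos\th\,\bar g(Y_a,\nu)\right) - H\,\bar g(X_a,\nu),
\end{eqnarray*}
check $\int_M\varphi_a\,dA=0$ from the modified Minkowski formula so $\varphi_a\in\mathcal{F}$, verify $\Delta\varphi_a + |h|^2\varphi_a - n\varphi_a = (n|h|^2-H^2)V_a$ and $\n_\mu\varphi_a - q\varphi_a = 0$ exactly as before, plug into the stability inequality, sum over $a = E_1,\dots,E_{n+1}$ using $\sum_a V_a^2 = \bar g(x,x)$, $\sum_a V_aX_a = (V_0-\cosh R)x$ and $\sum_a V_aY_a = -x$ (note the sign change in the last identity, coming from $Y_a = \tfrac12(|x|^2+1)a - \<x,a\>x$ combined with the flipped $\cos\th$ term — these two sign changes should cancel so that the final inequality has the same shape as \eqref{stab3''}), introduce the auxiliary function $\Phi = (V_0-\cosh R)H - n(\bar g(x,\nu) - \cos\th\sinh R)$ which vanishes on $\p M$ and satisfies $\Delta\Phi = (n|h|^2-H^2)\bar g(x,\nu)$, add $\int_M\Delta\tfrac12\Phi^2 = 0$, and conclude $n\bar g(x^T,x^T)(n|h|^2-H^2) + |\n\Phi|^2 \le 0$, forcing $n|h|^2 = H^2$, i.e. umbilicity. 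The same sketch with $V_0 = \cos r$ and the spherical $X_a, Y_a$ handles the exterior problem in $\ss^{n+1}$; the Euclidean exterior case is analogous with $X_a = \<x,a\>x - \tfrac12(|x|^2+1)a$ replaced by the version adapted to the exterior of $\bb^{n+1}$.

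The main obstacle I anticipate is bookkeeping the signs consistently: the exterior normal convention changes $\bar N$, which changes $q$, the Minkowski boundary term, and the $\sum_a V_aY_a$ reduction, and one must confirm these changes conspire so that the crucial combination $n\bar g(x,x) - (\cosh r - \cosh R)H\bar g(x,\nu)$ plus $\Delta\tfrac12\Phi^2$ still collapses to the manifestly nonnegative $n\bar g(x^T,x^T)(n|h|^2-H^2) + |\n\Phi|^2$. A secondary subtlety, already flagged in the Remark after Theorem \ref{thm0.4}, is that compactness of $M$ rather than containment in a fixed side is what is actually used; one only needs $x(\p M)\subset\p B_R^{\hh}$, and the proof never invokes that $x(M)$ lies entirely outside $B_R^{\hh}$, so no extra hypothesis is required beyond compactness and the boundary condition.
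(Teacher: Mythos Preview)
Your proposal is correct and follows the paper's own proof essentially verbatim: the paper likewise records $x=-\sinh R\,\bar N$, the modified $q=-\frac{1}{\sin\th}\coth R+\cot\th\,h(\mu,\mu)$, the Minkowski formula with the sign flip on the $\cos\th$ term, takes the same test function $\varphi_a$, and then says the rest is identical to Theorem~\ref{thm-h}. One small slip to fix: the identity $\sum_a V_aY_a=x$ is unchanged from the interior case (there is no sign flip there); the cancellation that makes the final integrand collapse to $n\bar g(x^T,x^T)(n|h|^2-H^2)+|\n\Phi|^2$ comes instead from the $-\sinh R\cos\th$ in $\varphi_a$ matching the $-\cos\th\sinh R$ in your modified $\Phi$.
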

\begin{proof} 
In this case, the differences occur that $x=-\sinh R\, \bar N$ and
the term $q$ in the stability inequality \eqref{stab-ineq} is given by
$$q=-\frac{1}{\sin\th}\coth R+\cot \th \, h(\mu,\mu).$$
By checking the proof of Proposition \ref{Minksf}, we see the Minkowski formula is
$$\int_M n (V_a -\sinh R\cos \th \, \bar g(Y_a, \nu)) dA=\int_M H\bar g(X_a, \nu) dA.$$
We take the test function to be
\begin{eqnarray}\label{testf}\varphi_a=  n (V_a -\sinh R\cos \th \, \bar g(Y_a, \nu))- H\bar g(X_a, \nu).\end{eqnarray} Then $\int_M \varphi_a dA=0$. Also, by checking the proof of Proposition \ref{bdrylemma}, we see that
$\bar \n_{\mu}\varphi_a= q\varphi_a$ along $\p M$. From Proposition \ref{prop4.6}, $\varphi_a$ in \eqref{testf} still satisfies  \eqref{eq-phi-h}. Then the proof is exactly the same as the interior problem, Theorem \ref{thm-h}.
\end{proof}

\medskip

\section{Heintze-Karcher-Ros type Inequality and Alexandrov Theorem}\label{sec5}

Let $K=0 \hbox{ or } \pm 1$.
Denote by $\bar \mm^{n+1}(K)$  the space form with sectional curvature $K$. As in previous section, we use the Poincar\'e ball model $(\bb^{n+1}, \bar g_{\hh})$ for $\bar \mm^{n+1}(-1)$ and the model $(\rr^{n+1}, \bar g_\ss)$ for $\bar \mm^{n+1}(1)$.

In this section we consider an {\it isometric embedding} $x: M\to \bar \mm^{n+1}(K)$ into a ball $B$ in a space form with free boundary, \i.e., $\theta=\pi/2$.
To unify the notation, we use $B$ to mean the unit ball $\bb^{n+1}$ in the Euclidean case, the ball $B_R^{\hh}$ with radius $R$ $(R\in (0, \infty))$ in the hyperbolic case and the ball $B_R^\ss$ with radius $R$ $(R\in (0, \pi))$ in the spherical case. We denote $\S=x(M)$.
Let $B$ be decomposed by $\S$ into two connected components. We choose one and denote it by $\O$.
Denote by $T$ the part of $\p \O$ lying on $\p B$. Thus, $\p \O= \S\cup T$.

We also unify the following notations:
\begin{equation*}
V_0=\left\{
\begin{array}{lll}1, &K=0,
\\ \cosh r, &K=-1,
\\ \cos r, &K=1,
\end{array}
\right.
\end{equation*}
and
\begin{equation*}
V_a=\left\{
\begin{array}{lll}\<x, a\>, &K=0,
\\ \frac{2\<x, a\>}{1-|x|^2}, &K=-1,
\\ \frac{2\<x, a\>}{1+|x|^2}, &K=1.
\end{array}
\right.
\end{equation*}
and $X_a$ is conformal vector field in \eqref{ConfKillv}, \eqref{ConfKillv-h} and \eqref{confKillsph} in each case respectively.

We first prove other Minkowski type formulae.

\begin{prop}
Let $x: M\to \bar \mm^{n+1}(K)$ be an embedded smooth hypersurface into $B$ which  meets $B$ orthogonally. Let $\s_{k}, k=1,\cdots, n$ be the $k$-th mean curvatures, i.e., the elementary symmetric functions acting on the principal curvatures. Then
\begin{eqnarray}
\int_\O V_a d\O&=&\frac{1}{n+1}\int_\S \bar g(X_a, \nu)dA. \label{Mink1'}\\
\int_\S V_a \s_{k-1} dA&=&\frac k{n+1-k}  \int_\S \s_k \bar g(X_a, \nu)dA, \quad \forall k=1,\cdots, n.\label{Mink2'}
\end{eqnarray}
 \end{prop}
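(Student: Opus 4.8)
The plan is to prove both identities by the divergence theorem, exploiting the conformal Killing property of $X_a$ together with the Hessian identity $\bar\n^2 V_0 = -K V_0 \bar g$ and the analogous identity for $V_a$. For \eqref{Mink1'}, I would work directly in the bulk domain $\O$: since $X_a$ is conformal Killing with $\mathcal{L}_{X_a}\bar g = (n+1)^{-1}(\div_{\bar g} X_a)\bar g$ and in each space form $\div_{\bar g} X_a = (n+1)V_a$ (this is \eqref{confKill}, \eqref{confKillhyper1}, and its spherical analogue), the divergence theorem on $\O$ gives
\begin{eqnarray*}
(n+1)\int_\O V_a\, d\O = \int_\O \div_{\bar g}(X_a)\, d\O = \int_{\p\O} \bar g(X_a, N_{\p\O})\, dA = \int_\S \bar g(X_a,\nu)\, dA + \int_T \bar g(X_a, \bar N)\, dA.
\end{eqnarray*}
The second boundary term vanishes because $X_a$ is tangential to $\p B$ (property (ii) of Propositions \ref{ConfK} and \ref{lem1h}, and its spherical counterpart), which yields \eqref{Mink1'} after dividing by $n+1$.

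For \eqref{Mink2'} I would argue on the hypersurface $M$ itself, following the classical Hsiung--Minkowski scheme adapted to the conformal vector field $X_a$. The starting point is to compute, for an orthonormal frame $\{e_\a\}$ on $M$ with Weingarten operator $\w$, the divergence of the tangential field $T_{k-1}(X_a^T)$, where $T_{k-1}$ is the $(k-1)$-th Newton transformation. Using the conformal Killing identity \eqref{confKill2} (and its space form versions), which reads $\tfrac12(\n_\a (X_a^T)_\b + \n_\b(X_a^T)_\a) = V_a g_{\a\b} - h_{\a\b}\bar g(X_a,\nu)$, together with the fact that $T_{k-1}$ is symmetric and divergence-free in space forms (by the contracted Codazzi equation, which holds precisely because the ambient curvature is constant), one obtains
\begin{eqnarray*}
\div_M\big(T_{k-1}(X_a^T)\big) = V_a \,\tr(T_{k-1}) - \bar g(X_a,\nu)\,\tr(T_{k-1}\w) = c_{k-1}\big( V_a\, \s_{k-1} - \bar g(X_a,\nu)\,\s_k\big)
\end{eqnarray*}
up to the combinatorial constants relating $\tr(T_{k-1}) = (n-k+1)\s_{k-1}$ and $\tr(T_{k-1}\w) = k\,\s_k$. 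Integrating over $M$ and using the divergence theorem, the boundary term $\int_{\p M}\bar g(T_{k-1}(X_a^T),\mu)\,ds$ appears.

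The main obstacle will be showing this boundary integral vanishes. For the $k=1$ case it reduces to $\int_{\p M}\bar g(X_a^T,\mu)\,ds = 0$, which follows as in the proof of Proposition \ref{MinkEucl} from $X_a$ being tangential to $\p B$ and $\mu = \bar\nu$ in the free boundary case ($\th = \pi/2$), so $\bar g(X_a,\mu) = \bar g(X_a,\bar\nu)$ vanishes since $X_a$ is tangent to $\p B$ but $\bar\nu$ is tangent to $\p B$ and normal to $\p M$... more carefully, $\bar g(X_a,\bar\nu)$ need not vanish, but one uses the explicit formula as in Proposition \ref{MinkEucl} where the $\th=\pi/2$ case made $\<X_a^T,\mu\> = -\cos\th\,\langle\cdots\rangle = 0$. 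For general $k$, the key point is that $\mu$ is a principal direction of $M$ along $\p M$ (Proposition \ref{lemma1}), hence $\mu$ is an eigenvector of $\w$ and therefore of each $T_{k-1}$; combined with $\bar g(X_a,\mu) = \tan\th\,\bar g(X_a,\bar N) = 0$ in the free boundary case (via \eqref{bdry-4hh} type relations, noting $X_a\perp\bar N$), we get $\bar g(T_{k-1}(X_a^T),\mu) = \lambda\,\bar g(X_a^T,\mu) = 0$ pointwise on $\p M$. This is the step requiring the most care, since one must track that the Newton transformation restricted to $T(\p M)$ versus the $\mu$-direction decomposes correctly; the umbilicity of $\p B$ and the constant contact angle are exactly what make it work. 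Assembling these gives \eqref{Mink2'} after bookkeeping the constants $c_{k-1}$.
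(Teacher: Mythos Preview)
Your proposal is correct and follows essentially the same route as the paper: the divergence theorem on $\O$ for \eqref{Mink1'}, and contracting the tangential conformal Killing identity \eqref{confKill2} against the Newton tensor $T_{k-1}$ (using that $T_{k-1}$ is divergence-free in space forms) for \eqref{Mink2'}, with the boundary term vanishing because $\mu$ is a principal direction and $X_a^T\perp\mu$. One simplification over your boundary discussion: in the free boundary case $\th=\pi/2$ one has directly $\mu=\bar N$ from \eqref{mu0}, so $\bar g(X_a^T,\mu)=\bar g(X_a,\bar N)=0$ without invoking any $\tan\th$ relation (which is in any case undefined at $\th=\pi/2$).
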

  \begin{remark}\label{remMink} Formula \eqref{Mink2'} is still true if $x$ is only an immersion.
 \end{remark}
 
\begin{proof} 
Due to the perpendicularity condition, $\mu=\bar N$.
Since $X_a\perp \bar N$  along $\p B$, we see $X_a\perp \mu$  along $\p \S$. From the conformal property, we have $$\div_{\bar g} X_a=(n+1)V_a.$$ Integrating it over   $\O$ and using Stokes' theorem, we have
\begin{eqnarray*}
(n+1)\int_\O V_a d\O=\int_\O \div_{\bar g} X_a d\O=\int_\S \bar g(X_a, \nu)dA +\int_{T} \bar g(X_a, \bar N) dA=\int_\S \bar g(X_a, \nu)dA.
\end{eqnarray*}
This is \eqref{Mink1'}.
Denote by $X_a^T$ the tangential projection of $X_a$ on ${\S}$. From above we know that $X_a^T\perp \mu$  along $\p \S$.
Let $\{e_\a\}_{\a=1}^{n}$ be an orthonormal frame on ${\S}$. From the conformal property, 
we have that \begin{eqnarray}\label{confKill2'}
\frac12\left[\n_\a (X_a^T)_\b+\n_\b (X_a^T)_\a\right]=V_a g_{\a\b}-h_{\a\b}\bar g(X_a,\nu).
\end{eqnarray}
(cf. \eqref{confKill2}.)
Denote $T_{k-1}(h)=\frac{\p \s_k}{\p h}$ the Newton transformation.
Multiplying $T_{k-1}^{\a\b}(h)$ to \eqref{confKill2'} and integrating by parts on $\S$, we get
\begin{eqnarray*}
&&\int_\S (n+1-k) V_a\s_{k-1}(h)-k\s_k(h)\bar g(X_a,\nu) dA
\\&=&\int_\S T_{k-1}^{\a\b}(h)\n_\a (X_a^T)_\b dA=\int_{\p\S} T_{k-1}^{\a\b}(h) \bar g(X_a^T, e_\b)\bar g(\mu, e_\a) ds
\\&= &\int_{\p\S} T_{k-1}(X_a^T, \mu) ds=0. 
\end{eqnarray*}
In the last equality, we have used Proposition \ref{lemma1} and the fact that $X_a^T\perp \mu$ along $\p\S$.
In fact, since $\mu$ is a principal direction of $h$, it is also a  principal direction of the Newton tensor $T_{k-1}$ of $h$, which implies that 
 $T_{k-1}(X_a^T, \mu)=0$.
 The proof is completed.
\end{proof}

Next we prove a Heintze-Karcher-Ros type inequality.
In order to prove the  Heintze-Karcher-Ros type inequality, we need a generalized Reilly formula, which has been proved  by  Qiu-Xia, Li-Xia \cite{QX, LX, LX2}.
\begin{theorem}[\cite{QX, LX}]\label{QLX}
Let $\O$ be a bounded domain in a Riemannian manifold $(\bar M, \bar g)$  
with piecewise smooth boundary $\p\O$. Assume that $\p \Omega$ is decomposed into two smooth pieces $\p_1 \Omega$ and $\p_2 \Omega$ with a common boundary $\Gamma$. 
Let $V$ be a non-negative smooth function  on $\bar \O$ such that $\frac{\bar \n^2 V}{V}$ is continuous up to $\p\O$.
Then for any function $f\in C^{\infty}(\bar \O\setminus \G)$, 
we have 
\begin{eqnarray}\label{qx}
&&\int_\O V\left(\left(\ode f-\frac{\bar \De V}{V}f\right)^2-\left|\on^2 f-\frac{\bar \n^2 V}{V}f\right|^2\right)d\O\\
&=&\int_\O \left(\ode V\bar g- \on^2 V+V{\rm Ric}\right)\left(\bar \n f-\frac{\bar \n V}{V}f, \bar \n f-\frac{\bar \n V}{V}f\right)d\O\nonumber
 \\&&+\int_{\p\O} V\left(f_\nu-\frac{V_\nu}{V}f\right)\left(\De f-\frac{\De V}{V}f\right)dA - \int_{\p\O} Vg\left(\n \left(f_\nu-\frac{V_\nu}{V}f\right), \n f-\frac{\n V}{V}f\right)dA 
 \nonumber\\&&+\int_{\p\O} VH\left(f_\nu-\frac{V_\nu}{V}f\right)^2 +\left(h-\frac{V_{\nu}}{V}g\right)\left(\n f-\frac{\n V}{V}f, \n f-\frac{\n V}{V}f\right)dA.\nonumber
\end{eqnarray}
\end{theorem}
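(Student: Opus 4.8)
The plan is to derive \eqref{qx} from the classical Bochner--Reilly identity by applying it not to $f$ itself but to the ``$V$-normalized'' quantities, i.e.\ by systematically replacing $f$ by $f$ and comparing with the auxiliary function $V$ throughout. First I would recall the pointwise Bochner formula $\frac12\bar\De|\bar\n f|^2 = |\bar\n^2 f|^2 + \bar g(\bar\n f,\bar\n\bar\De f) + \overline{\rm Ric}(\bar\n f,\bar\n f)$, multiply by $V$, and integrate over $\O$. Integrating by parts (using Stokes' theorem on the piecewise-smooth boundary $\p\O = \p_1\O\cup\p_2\O$, noting that the common edge $\G$ has measure zero so contributes nothing once $f$ is smooth up to $\p\O\setminus\G$ and $\bar\n^2 V/V$ is continuous) produces a bulk term $\int_\O V(\bar\De f)^2$, a Ricci term, a Hessian term, and boundary integrals. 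The algebraic heart of the argument is to complete the square: one writes
\begin{eqnarray*}
V(\bar\De f)^2 - V|\bar\n^2 f|^2 &=& V\Big(\bar\De f - \tfrac{\bar\De V}{V}f\Big)^2 - V\Big|\bar\n^2 f - \tfrac{\bar\n^2 V}{V}f\Big|^2 \\
&& {}+ \text{(cross terms involving } \bar\n V, \bar\n^2 V, \bar\De V\text{)},
\end{eqnarray*}
and then reorganizes the cross terms, integrating by parts again to move derivatives off $V$, so that they recombine into the stated expression $(\bar\De V\,\bar g - \bar\n^2 V + V\overline{\rm Ric})(\bar\n f - \tfrac{\bar\n V}{V}f,\, \bar\n f - \tfrac{\bar\n V}{V}f)$ in the bulk plus the four boundary integrals on $\p\O$.

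The second main step is the boundary analysis. On each smooth piece of $\p\O$ I would split the full Hessian $\bar\n^2 f$ and the ambient gradient $\bar\n f$ into tangential and normal parts relative to the outward unit normal $\nu$, using the standard decomposition $\bar\De f = \De f + H f_\nu + \bar\n^2 f(\nu,\nu)$ and $\bar\n^2 f(\nu, e_\a) = \n_\a f_\nu - h_{\a\b}\n_\b f$ (with analogous identities for $V$). Substituting these and collecting terms, the boundary contributions organize exactly into the four integrals displayed in \eqref{qx}: a term $V(f_\nu - \tfrac{V_\nu}{V}f)(\De f - \tfrac{\De V}{V}f)$, a term $-Vg(\n(f_\nu - \tfrac{V_\nu}{V}f),\, \n f - \tfrac{\n V}{V}f)$, a term $VH(f_\nu - \tfrac{V_\nu}{V}f)^2$, and the term $(h - \tfrac{V_\nu}{V}g)(\n f - \tfrac{\n V}{V}f, \n f - \tfrac{\n V}{V}f)$. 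This bookkeeping is where all the structure of the formula gets fixed, and since it must be done on both $\p_1\O$ and $\p_2\O$ with their (possibly different) second fundamental forms and since the edge $\G$ is merely Lipschitz, care is needed to justify that no distributional edge term appears --- this is the main technical obstacle. The cleanest way around it is to note that the integrand of every boundary integral is bounded near $\G$ (using the hypothesis that $\bar\n^2 V/V$ extends continuously), so an exhaustion/cutoff argument removing a tubular neighborhood of $\G$ and passing to the limit is legitimate.

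Since this theorem is attributed to \cite{QX, LX}, I would not reproduce the full computation but rather indicate that it follows from the above scheme, and refer the reader to those papers for the detailed verification. The key conceptual point worth emphasizing is that the substitution $f \mapsto f$, $\bar\n f \mapsto \bar\n f - \tfrac{\bar\n V}{V}f$ is precisely the change of variables that turns the ordinary Reilly formula (the case $V\equiv 1$) into the weighted one, and that all the ``$V$-twisted'' operators appearing --- $\bar\De f - \tfrac{\bar\De V}{V}f$ on functions, $h - \tfrac{V_\nu}{V}g$ on the boundary --- are the natural objects associated to the drift-type modification induced by $V$.
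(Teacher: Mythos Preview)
The paper does not give its own proof of this theorem: it is quoted from \cite{QX, LX}, and the only commentary is the remark that the form stated here differs slightly from \cite{LX} in that one integration by parts on $\p\O$ is omitted. Your decision to sketch the mechanism (weighted Bochner, completion of the square, tangential/normal splitting on the boundary) and then defer to the cited references is therefore exactly in line with how the paper handles it; your outline is also an accurate summary of the argument in \cite{QX, LX}.
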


\begin{remark} The formula \eqref{qx} here is a bit different with that in \cite{LX}. We do not do integration by parts on $\p\O$ in the last step of the proof as \cite{QX, LX}.
\end{remark}


\begin{theorem}\label{HK} Let $x: M\to \bar \mm^{n+1}(K)$ be an embedded smooth hypersurface into $B$ with $\p\Sigma \subset \p B$. 
Assume $\S$ lies in a half ball $$B_{a+}=\{V_a\ge 0\}=\{\<x, a\>\ge 0\}.$$ If $\S$ has positive mean curvature, then
\begin{eqnarray}\label{HKR}
\int_{\S} \frac{V_a}{H} dA\ge \frac{n+1}{n}  \int_\O V_a d\O.
\end{eqnarray}
Moreover, equality in \eqref{HKR} holds if and only if $\S$ is a spherical cap which  meets $\p B$ orthogonally.
\end{theorem}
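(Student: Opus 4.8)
The plan is to apply the generalized Reilly formula of Theorem~\ref{QLX} with the weight function $V=V_0$ and a carefully chosen test function $f$, exactly in the spirit of Ros' original proof of \eqref{HKR0}. Concretely, I would solve the mixed boundary value problem
\begin{equation*}
\begin{cases}
\bar\Delta f - \dfrac{\bar\Delta V_0}{V_0}\, f = V_0 & \text{in } \O,\\[1mm]
f = 0 & \text{on } \S,\\[1mm]
f_\nu = 0 & \text{on } T,
\end{cases}
\end{equation*}
where $\nu$ denotes the outward normal. (In the Euclidean case $V_0\equiv 1$ and this is just $\bar\Delta f = 1$ with the mixed Dirichlet/Neumann conditions.) Existence and regularity up to the corner $\Gamma=\p\S$ is where I expect a technical hurdle: the domain has a corner along $\Gamma$, but since $\S$ meets $\p B$ orthogonally the interior angle is $\pi/2$, which is precisely the favorable case guaranteeing $f\in H^2$ and enough regularity to run the integration by parts; I would cite \cite{QX,LX,LX2} (and the references therein on mixed problems in Lipschitz domains with right-angle corners) for this. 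The normalization $V_a\ge 0$ on $\O$ is used to make the volume integral on the right-hand side of \eqref{HKR} meaningful and of a definite sign.

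The key steps, in order, are as follows. First, plug the above $f$ into \eqref{qx} with $V=V_0$. Since $\bar\mm^{n+1}(K)$ is a space form, $\bar\n^2 V_0 = -K V_0\,\bar g$ (this is \eqref{V0} when $K=-1$, and the analogous identities for $K=0,1$), so the ambient term $\bar\Delta V_0\,\bar g - \bar\n^2 V_0 + V_0\,\overline{\mathrm{Ric}}$ vanishes identically — this is the whole point of using $V_0$ as the weight and is what makes the formula rigid. Second, the left-hand side of \eqref{qx} is bounded above by the trace term alone: by Cauchy--Schwarz on symmetric tensors,
\begin{equation*}
\left|\bar\n^2 f - \frac{\bar\n^2 V_0}{V_0} f\right|^2 \ge \frac{1}{n+1}\left(\bar\Delta f - \frac{\bar\Delta V_0}{V_0} f\right)^2 = \frac{V_0^2}{n+1},
\end{equation*}
so the bulk integral is $\le -\frac{n}{n+1}\int_\O V_0^2\,d\O$, with equality iff $\bar\n^2 f - \frac{\bar\n^2 V_0}{V_0} f = \frac{V_0}{n+1}\,\bar g$ on $\O$.

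Third, I would analyze the boundary terms on $\p\O = \S\cup T$. On $T$ we have $f_\nu = 0$; moreover $V_0$ restricted to $T$ has $\nabla^T V_0$ proportional to... actually the cleaner route is to observe that on $\p B$ the function $V_0$ is constant (the geodesic distance $r$ from the center is constant on $\p B$), hence on $T$ all the tangential-gradient terms involving $V_0$ simplify, and $h - \frac{(V_0)_\nu}{V_0} g$ restricted to $T$ equals the (umbilic, by the space-form geometry) shape operator of $\p B$ shifted by a scalar, which is $\ge 0$; combined with $f = $ (generally nonzero) on $T$ but $f_\nu=0$, the $T$-contribution is nonnegative and vanishes exactly when $f|_T$ is constant and $\S$ is umbilic. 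On $\S$ we have $f = 0$, so the only surviving term is $\int_\S V_0 H f_\nu^2\,dA \ge 0$. Putting the three steps together gives $-\frac{n}{n+1}\int_\O V_0^2\,d\O \ge -\frac{n}{n+1}\int_\O V_0^2\,d\O + (\text{nonnegative boundary terms})$, but that is not yet \eqref{HKR}; the final step is to feed in the divergence identity $\int_\O V_0^2\,d\O$...

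Let me restructure the last step, which is the real heart. After discarding the (nonnegative) terms I want to keep, the formula reads
\begin{equation*}
-\frac{n}{n+1}\int_\O V_0^2\, d\O \;\ge\; \int_\S V_0\, H\, f_\nu^2\, dA \;-\; \int_\S V_0\, (\bar\Delta f)\cdot 0 \cdots
\end{equation*}
and I pair this with the constraint obtained from integrating the PDE: $\int_\O V_0^2\,d\O = \int_\O V_0(\bar\Delta f - \frac{\bar\Delta V_0}{V_0}f)\,d\O = \int_{\p\O} V_0 f_\nu - (V_0)_\nu f \,dA$ after two integrations by parts (using $\bar\Delta V_0 = -(n+1)K V_0$ and Green's identity), which on using $f|_\S = 0$ and $f_\nu|_T = 0$ reduces to $\int_\S V_0 f_\nu\,dA - \int_T (V_0)_\nu f\,dA$. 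Controlling the $T$-term and then applying Cauchy--Schwarz $\left(\int_\S V_0 f_\nu\right)^2 \le \int_\S \frac{V_0}{H}\cdot \int_\S V_0 H f_\nu^2$ closes the estimate and produces exactly $\int_\S \frac{V_0}{H}\,dA \ge \frac{n+1}{n}\int_\O V_0^2\,d\O$ — wait, the stated inequality has $\int_\O V_a$, not $\int_\O V_0^2$, on the right. The resolution is that one should run the argument not with the test function solving $\bar\Delta f - \frac{\bar\Delta V_0}{V_0}f = V_0$ but rather the one with right-hand side normalized so the bulk term reproduces $\int_\O V_a$; i.e. the correct weight/source pairing is dictated by matching \eqref{Mink1'}. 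I would therefore take $V = V_0$ as weight but $f$ solving the problem with source chosen so that, after the Cauchy--Schwarz step and an application of the Minkowski-type identity \eqref{Mink1'} to rewrite $\int_\S \bar g(X_a,\nu)$ in terms of $\int_\O V_a$, everything lines up; the bookkeeping here is the part I'd be most careful about.

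\medskip

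\noindent\textit{Summary of the proof strategy.} Apply the weighted Reilly formula \eqref{qx} with $V=V_0$ (so the ambient curvature term drops out in the space form), and $f$ the solution of an appropriate mixed boundary value problem with $f=0$ on $\S$ and $f_\nu=0$ on $T$. The Hessian Cauchy--Schwarz inequality produces the constant $\frac{n+1}{n}$; the orthogonality $\S\perp\p B$ kills the corner and mixed terms and makes the $T$-boundary contribution nonnegative (using that $\p B$ is umbilic and $V_0$ is constant on $\p B$); the $\S$-boundary contribution is $\int_\S V_0 H f_\nu^2 \ge 0$. A final Cauchy--Schwarz between $\int_\S V_0 f_\nu$ and $\int_\S V_0 H f_\nu^2$, combined with the Minkowski formula \eqref{Mink1'} to identify $\int_\O V_a$, yields \eqref{HKR}. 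Equality forces $\bar\n^2 f - \frac{\bar\n^2 V_0}{V_0}f = \frac{f\text{-source}}{n+1}\bar g$ in $\O$, $H$ constant on $\S$ via the equality case of the last Cauchy--Schwarz, and then an ODE analysis of $f$ along geodesics shows $\O$ is a geodesic ball cap, i.e. $\S$ is a totally umbilic spherical cap meeting $\p B$ orthogonally. \textbf{The main obstacle} is the regularity of $f$ at the right-angle corner $\Gamma = \p\S$ (needed to justify the integrations by parts in \eqref{qx}) together with the correct bookkeeping of which source term makes the bulk integral match $\int_\O V_a\,d\O$ rather than $\int_\O V_0^2\,d\O$; the latter is handled by invoking the Minkowski identity \eqref{Mink1'}, and the former by the $\pi/2$-corner theory underlying Theorem~\ref{QLX}.
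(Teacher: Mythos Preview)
Your architecture (weighted Reilly formula, mixed boundary value problem, Cauchy--Schwarz on the Hessian, then Cauchy--Schwarz on $\int_\S V f_\nu$) matches the paper's, but your choice of weight $V=V_0$ is the wrong one and this is a genuine gap, not a bookkeeping issue. With $V_0$ and a Neumann condition on $T$, the Green step yields $\int_\O V_0 = \int_\S V_0 f_\nu - \int_T (V_0)_{\bar N} f$; in the Euclidean case the $T$ term vanishes and you obtain $\int_\S \frac{1}{H} \ge \frac{n+1}{n}|\O|$, the \emph{unweighted} Heintze--Karcher--Ros inequality, not \eqref{HKR}. Your proposed rescue via the Minkowski identity \eqref{Mink1'} cannot work: \eqref{Mink1'} rewrites $\int_\O V_a$ as $\frac{1}{n+1}\int_\S \bar g(X_a,\nu)$, a quantity that appears nowhere in your Reilly computation, so there is no algebraic bridge from your inequality to the one with $V_a$. (In the non-Euclidean cases the Neumann approach with $V_0$ has a further problem: the $T$ term in Green's identity does not vanish and the sign goes the wrong way.)

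The paper takes $V=V_a$ throughout. This works because $V_a$ satisfies the same Hessian equation $\bar\n^2 V_a=-K V_a\bar g$ as $V_0$ (Proposition~\ref{lem2h}), so the ambient curvature term in \eqref{qx} still drops out. The boundary condition on $T$ is then the Robin condition $V_a f_{\bar N}-f(V_a)_{\bar N}=0$, not Neumann; this makes the $T$ term in Green's identity vanish exactly, giving $\int_\O V_a=\int_\S V_a f_\nu$ with no remainder. The decisive pointwise identity is \eqref{bdrycurv}: on $\p B$ one has $h^{\p B}-\frac{(V_a)_{\bar N}}{V_a}g^{\p B}=0$, which kills the entire $T$ contribution in \eqref{qx}. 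Note this identity holds for $V_a$ but \emph{fails} for $V_0$ (in the Euclidean ball $\frac{(V_0)_{\bar N}}{V_0}=0$ while the principal curvature is $1$; in the hyperbolic ball one gets $\tanh R$ versus $\coth R$). After these two vanishings, Cauchy--Schwarz on $\int_\S V_a f_\nu$ closes the argument directly, with the weight $V_a$ present at every step and no detour through \eqref{Mink1'}.

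A secondary point: the theorem does not assume $\S$ meets $\p B$ orthogonally; orthogonality is part of the \emph{conclusion} in the equality case. Your regularity argument for the mixed problem invokes the $\pi/2$ corner, which is not given. The paper establishes existence of the Robin/Dirichlet problem via the Fredholm alternative (Appendix~\ref{App}), using only $H>0$ and $V_a>0$, and identity \eqref{bdrycurv} is what makes the $T$-boundary terms in \eqref{qx} vanish without any angle hypothesis.
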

\begin{proof}Recall $\p \O=\S\cup T$, where $T$ is the boundary part lying in $\p B$. See Figure 1.
Let $f$ be a solution of the mixed boundary value problem
\begin{equation}\label{mixed2}
\left\{
\begin{array}{rccl}
\ode f+K(n+1)f&=&1&
\hbox{ in } \Omega,\\
f&=& 0&\hbox{ on } \S,\\
V_a f_{\bar N}-f(V_a)_{\bar N}&=& 0&\hbox{ on } T. 
\end{array}
\right.
\end{equation}
Since the existence of \eqref{mixed2} has its own interest, we give a proof in Appendix \ref{App}. From the Appendix we have $f\in W^{1,2}(\O)$ satisfying \eqref{mixed2} in the weak sense, i.e., $f=0$ on $\S$ and
\begin{eqnarray}\label{mixed-weak}
\int_\O [\bar g(\bar \n f, \bar \n\varphi) - K(n+1)f\varphi +\varphi] \, d\O=\int_T f \varphi dA, 
\end{eqnarray}
for all $\varphi\in W^{1,2}(\O)$ with $\varphi=0$ on $\S$.
 Moreover the regularity of $f$, $f\in 
C^\infty(\bar \Omega \backslash \Gamma)$ follows from standard linear elliptic PDE theory.

From the fact $\bar \n^2 V_a= -KV_a \bar g$, we see \begin{eqnarray}\label{fact1}
&&\ode V_a+K(n+1)V_a=0 ,\quad \bar \Delta V_a\bar g-\bar \n^2 V_a+V_a\overline{{\rm Ric}}=0.
\end{eqnarray}
By using Green's formula, \eqref{mixed2} and \eqref{fact1}, we have \begin{eqnarray}\label{first}ˆ\int_\O V_a d\O&=& \int_\O V_a(\ode f+K(n+1)f)- (\ode V_a+K(n+1)V_a) f d\O\nonumber
\\&=&\int_{\p\O} V_a f_\nu- (V_a)_\nu f dA\nonumber
\\&=&\int_{\S}V_a f_{\nu} dA+\int_{T} V_a f_{\bar N}-f(V_a)_{\bar N} dA\nonumber
\\&=&\int_{\S} V_a f_{\nu} dA.
\end{eqnarray}
Using H\"older's inequality for the RHS of \eqref{first}, we have
\begin{eqnarray}\label{first1}
\left(\int_\O V_ad\O\right)^2 \leq \int_{\S} V_aHf_{\nu}^2 dA\int_\S \frac{V_a}{H} dA.
\end{eqnarray}

Next, we use formula \eqref{qx} in our situation with $V=V_a$. Because of \eqref{fact1} and \eqref{mixed2}, formula \eqref{qx} gives
\begin{eqnarray}\label{qx2}
\frac{n}{n+1}\int_\O V_a d\O&=&  \int_\O V_a (\ode f+K(n+1)f)^2 d\O-\frac{1}{n+1} \int_\O V_a (\ode f+K(n+1)f)^2d\O
\nonumber
\\
&\geq &\int_\O V_a \left((\ode f+K(n+1)f)^2-|\bar \n^2 f+K(n+1)f\bar g|^2\right)d\O\nonumber
 \\&=&\int_{\S} V_a Hf_{\nu}^2 dA +\int_T\left(h^{\p B}-\frac{(V_a)_{\bar N}}{V_a}g^{\p B}\right)\left(\n f-\frac{\n V_a}{V_a}f, \n f-\frac{\n V_a}{V_a}f\right)dA\nonumber
 \\&=&{\rm I} +{\rm II}.
\end{eqnarray}

We claim that  \begin{eqnarray}\label{bdrycurv}
h^{\p B}-\frac{(V_a)_{\bar N}}{V_a}g^{\p B}=0, \hbox{ on }T\subset \p B,
\end{eqnarray}
which implies  that term II vanishes.
We take the hyperbolic case for instance. First, $\p B$ is umbilical in $\hh^{n+1}$ with all principal curvatures $\coth R=\frac{1+R_{\rr}^2}{2R_{\rr}}$. Second, 
since $\bar N=\frac{1-R_\rr^2}{2}\frac{x}{R_{\rr}}$ and $V_{a}=\frac{2\<x, a\>}{1-|x|^2}$, a  direct computation gives
\begin{eqnarray*}
\frac{(V_a)_{\bar N}}{V_a}\bigg|_{|x|=R_{\rr}}= \left\<\bar \n^{\rr}\log \left(\frac{2\<x, a\>}{1-|x|^2}\right),  \frac{1-R_\rr^2}{2}\frac{x}{R_{\rr}}\right\>\bigg|_{|x|=R_{\rr}}=\frac{1+R_{\rr}^2}{2R_\rr}.\end{eqnarray*}
Thus \eqref{bdrycurv} follows for the hyperbolic case. For other two cases \eqref{bdrycurv} follows similarly.

Taking account of the above information in \eqref{qx2}, we obtain
\begin{eqnarray}\label{reillyApp}
&&\frac{n}{n+1} \int_\O V_a d\O\ge   \int_{\S} V_aHf_\nu^2dA .
\end{eqnarray}
Combining \eqref{first1} and \eqref{reillyApp}, we conclude \eqref{HKR}.

We are remained to consider the equality case.
If $\Sigma$ a spherical cap which meets $\p B$ orthogonally,   the  Minkowski formula \eqref{Mink1'} implies that equality in \eqref{HKR} holds, for $\S$ has constant mean curvature.
Conversely, if equality in \eqref{HKR} holds, then equality in \eqref{qx2} holds, which implies that 
 $\bar \n^2 f+Kf\bar g=0$ holds in $\O$.
Restricting this equation on $\S$, in view of $f=0$ on $\S$ we know that $\S$ must be umbilical. Thus it is  a spherical cap and $\Omega$ is the intersection of two geodesic balls. It is easy to show the contact angle must be $\frac{\pi}{2}$. Indeed, we have an explicit form for $f$:
\begin{equation*}
f(x)=\left\{
\begin{array}{lll}\frac{1}{2(n+1)}d_p(x)^2+A, &K=0,
\\ A\cosh d_p(x)-\frac{1}{n+1}, &K=-1, 
\\ A\cos d_p(x)+\frac{1}{n+1}, &K=1,
\end{array}
\right.
\end{equation*}
where $p\in\bar{M}^{n+1}(K)$, $A\in\rr$ and $d_p$ is the distance function from $p$. From the boundary condition, we see $\bar g(\bar \n f, \bar N)=0$ on $\Gamma= \S\cap T$, and then  $\bar g(\bar \n d_p, \bar N)=0$. This implies that these two intersecting geodesic balls are perpendicular. The proof is completed.
\end{proof}

As an application we  give an integral geometric proof of the Alexandrov Theorem, which was obtained by  Ros-Souam in \cite{RS} by using the method of moving plane. Our proof has the same flavor of Reilly \cite{Reilly} and Ros \cite{Ros}, see also \cite{QX, LX}.
\begin{theorem}
Let $x: M\to \bar \mm^{n+1}(K)$ be an embedded smooth CMC hypersurface into $B$ which  meets $B$ orthogonally. Assume $\S$ lies in a half ball. Then $\S$ is either a spherical cap or part of  totally geodesic hyperplane.
\end{theorem}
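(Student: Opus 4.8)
The plan is to deduce the rigidity by combining the Heintze--Karcher--Ros inequality of Theorem~\ref{HK} with the two Minkowski identities already established. The relevant ones are the free boundary ($\theta=\pi/2$) Minkowski formula
\[
n\int_\Sigma V_a\,dA=\int_\Sigma H\,\bar g(X_a,\nu)\,dA,
\]
which is \eqref{Mink1_0} in the Euclidean case and the $\theta=\pi/2$ specialization of Proposition~\ref{Minksf} (and its spherical analogue) in the other two, together with the ``volume'' Minkowski formula~\eqref{Mink1'}, $\int_\Omega V_a\,d\Omega=\tfrac1{n+1}\int_\Sigma\bar g(X_a,\nu)\,dA$. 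First I fix the unit vector $a$ with $\Sigma\subset B_{a+}=\{V_a\ge 0\}$. Since $B_{a-}=\{V_a<0\}$ is a nonempty connected set disjoint from $\Sigma$, it lies in a single component of $B\setminus\Sigma$; the other component is then disjoint from $B_{a-}$, hence contained in $\{V_a\ge 0\}$. I take $\Omega$ to be that component, so that $V_a\ge 0$ on $\overline\Omega$ and the hypotheses of Theorem~\ref{HK} are in force whenever $H>0$.

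Now I split according to the constant $H\ge 0$. If $H>0$: since $H$ is constant, the left side of~\eqref{HKR} equals $\tfrac1H\int_\Sigma V_a\,dA$, and the free boundary Minkowski formula rewrites $\int_\Sigma V_a\,dA=\tfrac Hn\int_\Sigma\bar g(X_a,\nu)\,dA$, so the left side is $\tfrac1n\int_\Sigma\bar g(X_a,\nu)\,dA$. By~\eqref{Mink1'} the right side of~\eqref{HKR} equals $\tfrac{n+1}n\cdot\tfrac1{n+1}\int_\Sigma\bar g(X_a,\nu)\,dA=\tfrac1n\int_\Sigma\bar g(X_a,\nu)\,dA$ as well. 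Hence equality holds in~\eqref{HKR}, and the equality case of Theorem~\ref{HK} forces $\Sigma$ to be a spherical cap meeting $\partial B$ orthogonally.

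If $H=0$: the Heintze--Karcher--Ros inequality is not needed at all, as the free boundary Minkowski formula degenerates to $n\int_\Sigma V_a\,dA=0$, and since $V_a\ge 0$ on $\Sigma$ this forces $V_a\equiv 0$ on $\Sigma$, i.e.\ $\Sigma\subset\{V_a=0\}$. But $\{V_a=0\}\cap B=\{\langle x,a\rangle=0\}\cap B$ is a totally geodesic hyperplane through the center of $B$; an embedded hypersurface contained in it with $\partial\Sigma\subset\partial B$ is a piece of it, and such a hyperplane meets $\partial B$ orthogonally. In both cases $\Sigma$ is a spherical cap or part of a totally geodesic hyperplane, as asserted.

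The only subtle point here is bookkeeping rather than analysis: the component $\Omega$ and the normal $\nu$ must be chosen consistently throughout (outward normal of $\Omega$ along $\Sigma$, with the CMC sign convention that makes $H\ge 0$), and one checks that $\int_\Omega V_a\,d\Omega>0$ so that the identity chaining the two sides of~\eqref{HKR} is nondegenerate. The conceptual content---that the two a priori unrelated Minkowski identities pin both sides of the Heintze--Karcher--Ros inequality to the same number, hence to its equality case---is immediate once these conventions are in place, so there is no genuine obstacle beyond Theorem~\ref{HK} and the Minkowski formulae themselves.
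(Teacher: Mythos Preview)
Your argument for $H>0$ is exactly the paper's: chain the volume Minkowski formula~\eqref{Mink1'} and the first-order one~\eqref{Mink2'} (equivalently~\eqref{Mink1_0}/\eqref{Mink1h}) to force equality in Theorem~\ref{HK}. The treatment of $H=0$, however, is genuinely different. The paper moves the totally geodesic leaf $\{V_a=c\}$ up until it first touches $\Sigma$ and then invokes the strong maximum principle/boundary point lemma to identify $\Sigma$ with that leaf. Your route is purely integral: from $n\int_\Sigma V_a\,dA=0$ and $V_a\ge 0$ on $\Sigma$ you read off $V_a\equiv 0$ on $\Sigma$. This is cleaner and avoids any comparison argument; the paper's approach, on the other hand, simultaneously pins down the sign of $H$ and handles the $H=0$ rigidity in one geometric step.

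There is one point you gloss over that deserves a sentence. You invoke the ``CMC sign convention that makes $H\ge 0$'' and separately pick $\Omega$ to be the component contained in $\{V_a\ge 0\}$, but Theorem~\ref{HK} and \eqref{Mink1'} require $\nu$ to be the \emph{outward} normal of that specific $\Omega$, and it is not automatic that this normal gives $H\ge 0$. The paper fills this with the moving-plane touching argument. In your framework it also follows directly from the two Minkowski identities: with $\nu$ outward to $\Omega\subset\{V_a\ge 0\}$ one has $\int_\Omega V_a\,d\Omega>0$, hence $\int_\Sigma \bar g(X_a,\nu)\,dA>0$ by~\eqref{Mink1'}, and then $H\int_\Sigma \bar g(X_a,\nu)\,dA=n\int_\Sigma V_a\,dA\ge 0$ forces $H\ge 0$ (with equality only when $V_a\equiv 0$ on $\Sigma$, which is your $H=0$ case). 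Inserting this line would make the bookkeeping airtight.
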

\begin{remark}
The condition that $\S$ lies in a half ball cannot be removed because there are other embedded CMC hypersurfaces, like the Denaulay hypersurfaces in a ball which  meets $\p B$ orthogonally which does not lie in a half ball.
\end{remark}
\begin{proof}We take the hyperbolic case for instance.

We claim first that the  constant mean curvature $H$ is non-negative. 
To prove this claim, let the totally geodesic hyperplane $\{\<x, a\>=0\}$ move upward along $a$ direction along the totally geodesic foliation of $\hh^{n+1}$, until it touches $\S$ at some point $p$ at a first time. It is clear that $H=H(p)\ge 0$.
{{If $H=0$, then the boundary point lemma or the interior maximum principle 
implies that $\S$ must be some totally geodesic hyperplane.}}

Next we assume $H>0$.
In this case the two Minkowski formulae \eqref{Mink1'} and \eqref{Mink2'} yield
\begin{eqnarray*} 
(n+1)  \int_\O V_a dA&=&\int_\S \bar g(X_a, \nu)dA=\frac{1}{H}\int_\S H\bar g(X_a, \nu)dA
\\&=&\frac{n}{H}\int_\S V_a dA=n\int_\S\frac{V_a}{H} dA.
\end{eqnarray*}
The above equation means, for the constant mean curvature hypersurface $\S$, the Heintze-Karcher-Ros inequality is indeed an equality. By the classification of equality case in \eqref{HKR},
we conclude that $\S$ must be  a spherical cap. 
The proof is completed.
\end{proof}

Using the higher order Minkowski formulae \eqref{Mink2'} and the Heintze-Karcher-Ros inequality \eqref{HKR}, we can also prove the rigidity when  $\S$ has constant higher order mean curvatures or mean curvature quotients as Ros \cite{Ros} and Koh-Lee \cite{KL}.
\begin{theorem}\label{Alex1}
Let $x: M\to \bar \mm^{n+1}(K)$ be an isometric immersion into a ball with free boundary. 
Assume that $\S$ lies in a half ball.
\begin{itemize}
\item[(i)] Assume $x$ is an embedding and has nonzero constant higher order mean curvatures $\s_k$, $1\le k\le n$. Then $\S$  is a spherical cap.
\item[(ii)] Assume $x$  has nonzero constant curvature quotient,  i.e.,
$$\frac{\s_k}{\s_l}=const., \quad \s_l>0, \quad 1\le l<k\le n.$$ Then $\S$  is a spherical cap.
\end{itemize}
\end{theorem}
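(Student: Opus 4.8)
The plan is to follow Ros's scheme for constant $\s_k$ \cite{Ros} and the Montiel--Ros scheme for constant curvature quotients, using the weighted Minkowski formulae \eqref{Mink1'}--\eqref{Mink2'} and the weighted Heintze--Karcher--Ros inequality of Theorem \ref{HK} in place of their model--space versions. In both cases the goal is to show $\S$ is totally umbilical; since $\S$ meets $\p B$ orthogonally and its curvature functions do not vanish, an umbilical $\S$ cannot be totally geodesic and is therefore a spherical cap. The first step is positivity. I would produce an elliptic point on $\S$ by moving a totally geodesic hyperplane (a geodesic sphere in $\ss^{n+1}$) along the space--form foliation until it first touches $x(M)$, exactly as in the Alexandrov proof above; the half--ball hypothesis $\S\subset\{V_a\ge 0\}$ together with the free--boundary condition is what forces first contact at an interior point, where all principal curvatures have the same sign, so $\s_k>0$ there. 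As $\s_k$ is a nonzero constant, $\s_k>0$ on all of $\S$, and by G{\aa}rding's theory $\S$ lies in the positive cone $\Gamma_k^+$; in particular $\s_1,\dots,\s_k>0$, so $H>0$ and all Newton--Maclaurin inequalities are available.

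For part (i), write $p_j=\s_j/\binom nj$. Newton's inequalities make $p_j/p_{j-1}$ non--increasing in $j$, so $p_k/p_{k-1}\le p_1=H/n$, which rearranges to the pointwise bound $\tfrac1H\le \tfrac{n+1-k}{nk}\,\tfrac{\s_{k-1}}{\s_k}$; multiplying by $V_a\ge0$ and integrating gives $\int_\S \tfrac{V_a}{H}\,dA\le \tfrac{n+1-k}{nk\,\s_k}\int_\S V_a\s_{k-1}\,dA$. Conversely, Theorem \ref{HK}, then \eqref{Mink1'}, then \eqref{Mink2'} (with $\s_k$ the given constant) give the reverse chain $\int_\S \tfrac{V_a}{H}\,dA\ge \tfrac{n+1}{n}\int_\O V_a\,d\O=\tfrac1n\int_\S \bar g(X_a,\nu)\,dA=\tfrac{n+1-k}{nk\,\s_k}\int_\S V_a\s_{k-1}\,dA$. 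Hence equality holds throughout, and equality in \eqref{HKR} forces $\S$ to be a spherical cap by the rigidity statement of Theorem \ref{HK}.

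For part (ii), $x$ is only an immersion, so Theorem \ref{HK} is unavailable and I would argue entirely with \eqref{Mink2'}, which remains valid for immersions by Remark \ref{remMink}. Applying \eqref{Mink2'} at the indices $l$ and $k$ and eliminating the common integral $\int_\S\bar g(X_a,\nu)\,dA$ using $\s_k=c\,\s_l$ yields $\int_\S V_a\s_{k-1}\,dA=\mu c\int_\S V_a\s_{l-1}\,dA$ with $\mu=\tfrac{k(n+1-l)}{l(n+1-k)}$. On the other hand, monotonicity of $p_j/p_{j-1}$ on $\Gamma_k^+$ gives $p_k p_{l-1}\le p_l p_{k-1}$, which written out is precisely the pointwise inequality $\s_{k-1}\ge \mu c\,\s_{l-1}$; integrating against $V_a\ge0$ gives the reverse integral inequality, so equality holds and $\s_{k-1}=\mu c\,\s_{l-1}$ on $\{V_a>0\}$. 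This is equality in the Newton chain, which inside $\Gamma_k^+$ forces all principal curvatures equal there; since $V_a$ is a nontrivial solution of $\bar\n^2 V_a=-KV_a\bar g$ it cannot vanish to second order on an open set, so $\{V_a>0\}$ is dense and $\S$ is umbilical everywhere, hence a spherical cap.

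Throughout, Proposition \ref{lemma1} ($\mu$ is a principal direction of $\p\S$) is what makes the boundary terms in \eqref{Mink1'}--\eqref{Mink2'} drop. The step I expect to be most delicate is the positivity argument in the non--embedded case (ii): with no enclosed region at hand one must still manufacture an elliptic point and conclude $\S\subset\Gamma_k^+$, and one must make sure the locus $\{V_a>0\}$ on which rigidity is extracted is genuinely dense; a suitable choice of the vector $a$, or an argument that $V_a|_M$ attains its maximum in $\mathrm{int}\,M$ using the orthogonality condition at $\p M$, should settle this.
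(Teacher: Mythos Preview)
Your proposal is correct and follows precisely the route the paper indicates: it explicitly says the proof proceeds ``as Ros \cite{Ros} and Koh--Lee \cite{KL}'' using the weighted Minkowski formulae \eqref{Mink1'}--\eqref{Mink2'} together with the Heintze--Karcher--Ros inequality \eqref{HKR}, and then leaves the details to the reader; you have supplied exactly those details. One small sharpening: for the density of $\{V_a>0\}$ on $\S$ in part~(ii), rather than invoking the equation $\bar\n^2 V_a=-KV_a\bar g$, note simply that $\{V_a=0\}$ is a totally geodesic hyperplane in $\bar\mm^{n+1}(K)$, so an open piece of $\S$ contained in it would be totally geodesic and hence have $\s_l=0$, contradicting the hypothesis $\s_l>0$.
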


Note that in Theorem \ref{Alex1} (ii), we do not need assume the embeddedness of $x$, since in the proof we need only use the higher order Minkowski formulae \eqref{Mink2'} (without use of the Heintze-Karcher-Ros inequality), which is true for immersions, see Remark \ref{remMink}. On the other hand, the condition of embeddedness may not be removed in Theorem \ref{Alex1} (i) in view of Wente's counterexample. The proof is similar, we leave it to the interested reader.

\medskip

\appendix
\section{Existence of weak solution of \eqref{mixed2}}\label{App}
In this Appendix we discuss the existence of weak solution of \eqref{mixed2}, namely 
\begin{equation}\label{mixed3}
\left\{
\begin{array}{rccl}
\ode f+K(n+1)f&=&1&
\hbox{ in } \Omega,\\
f&=& 0&\hbox{ on } \S,\\
V_a f_{\bar N}-f(V_a)_{\bar N}&=& 0&\hbox{ on } T 
\end{array}
\right.
\end{equation}
in the weak sense \eqref{mixed-weak}.
Since our Robin boundary condition has a different sign, i.e. $\frac{(V_a)_{\bar N}}{V_a}<0$, we can not apply known results about the existence for mixed boundary problem, for example, \cite{Liebm}. Here we use the Fredholm alternative theorem. In order to use it, we have to show that 
\begin{equation}\label{mixed4}
\left\{
\begin{array}{rccl}
\ode \phi+K(n+1)\phi&=&0&
\hbox{ in } \Omega,\\
\phi &=& 0&\hbox{ on } \S,\\
V_a \phi_{\bar N}-\phi(V_a)_{\bar N}&=& 0&\hbox{ on } T
\end{array}
\right.
\end{equation}
has only the trivial solution $\phi=0$ in $W^{1,2}(\O)$. For a general domain $\O$ we do not know how to prove it. 
Nevertheless, we can prove it for domains under the conditions given in Theorem \ref{HK}. 
\begin{prop}
 Let $x: M\to \bar \mm^{n+1}(K)$ be an embedded smooth hypersurface into $B$ with $\p\Sigma \subset \p B$. 
Assume $\S$ lies in a half ball $B_{a+}=\{V_a\ge 0\}=\{\<x, a\>\ge 0\}.$ If $\S$ has positive mean curvature, then
\eqref{mixed4} has only the trivial solution $\phi=0$.
\end{prop}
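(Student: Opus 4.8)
The plan is to show that any $\phi \in W^{1,2}(\Omega)$ solving \eqref{mixed4} must vanish identically, by testing the weak formulation against $\phi$ itself and combining the resulting Dirichlet-type identity with the generalized Reilly formula \eqref{qx}, exactly as in the proof of Theorem \ref{HK}. First I would write out the weak form of \eqref{mixed4}: for all $\varphi \in W^{1,2}(\Omega)$ with $\varphi = 0$ on $\Sigma$, one has
\begin{eqnarray*}
\int_\Omega \left[\bar g(\bar\n \phi, \bar\n \varphi) - K(n+1)\phi\varphi\right] d\Omega = \int_T \frac{(V_a)_{\bar N}}{V_a}\phi\varphi\, dA.
\end{eqnarray*}
Taking $\varphi = \phi$ (legitimate since $\phi = 0$ on $\Sigma$) gives
\begin{eqnarray*}
\int_\Omega \left[|\bar\n \phi|^2 - K(n+1)\phi^2\right] d\Omega = \int_T \frac{(V_a)_{\bar N}}{V_a}\phi^2\, dA.
\end{eqnarray*}
Because $\Sigma$ lies in the half ball $\{V_a \geq 0\}$ and the foliation by geodesic spheres centered at the "center" of $B$ shows $V_a$ is positive on $\bar\Omega$ away from the equatorial disk, while \eqref{bdrycurv} gives $(V_a)_{\bar N}/V_a = -H^{\partial B}/1 < 0$ on $T$ (the principal curvature of $\partial B$ with respect to the \emph{inward} orientation is negative), the right-hand side is $\le 0$.

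Next I would feed $\phi$ into the Reilly-type formula \eqref{qx} with $V = V_a$ and $f = \phi$. Since $\phi$ satisfies $\bar\Delta \phi + K(n+1)\phi = 0$ in $\Omega$, which together with \eqref{fact1} means $\bar\Delta\phi - \frac{\bar\Delta V_a}{V_a}\phi = 0$, the left-hand side of \eqref{qx} becomes $-\int_\Omega V_a\,\big|\bar\n^2\phi + K(n+1)\phi\,\bar g\big|^2\, d\Omega \le 0$. On the right-hand side of \eqref{qx}, the bulk curvature term vanishes by \eqref{fact1}; the boundary integrals over $\Sigma$ simplify because $\phi = 0$ there, leaving only $\int_\Sigma V_a H (\phi_\nu)^2\, dA \ge 0$ (here $H > 0$ is the mean curvature of $\Sigma$ and $V_a \ge 0$); and the boundary integrals over $T$ collapse using the Robin condition $V_a\phi_{\bar N} - \phi(V_a)_{\bar N} = 0$ together with \eqref{bdrycurv}, so that term II (in the notation of the proof of Theorem \ref{HK}) vanishes. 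Comparing signs forces $\int_\Omega V_a\,|\bar\n^2\phi + K(n+1)\phi\,\bar g|^2 = 0$ and $\int_\Sigma V_a H (\phi_\nu)^2 = 0$, hence $\bar\n^2 \phi + K(n+1)\phi\,\bar g = 0$ on $\{V_a > 0\}$ and $\phi_\nu = 0$ on $\Sigma \cap \{V_a > 0\}$.

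Finally I would conclude: on $\Sigma$ we have $\phi = 0$, and on the part of $\Sigma$ where $V_a > 0$ also $\phi_\nu = 0$; since $\phi$ satisfies the second-order equation $\bar\Delta\phi + K(n+1)\phi = 0$ and, on $\{V_a > 0\}$, the stronger $\bar\n^2\phi + K(n+1)\phi\,\bar g = 0$, unique continuation (or direct integration of the Hessian equation along geodesics emanating from $\Sigma$, noting $\phi$ and its normal derivative both vanish on an open portion of $\Sigma$) gives $\phi \equiv 0$ on $\Omega$. Alternatively, and more cleanly, I would combine the two displayed identities above directly: the test-function identity says $\int_\Omega |\bar\n\phi|^2 - K(n+1)\phi^2 \le 0$, while integrating the Hessian equation (valid where $V_a>0$, and $\Omega$ is essentially covered since $\Sigma \subset \{V_a \ge 0\}$) shows $\phi$ is a multiple of a distance-type function forced to be zero by $\phi|_\Sigma = 0$. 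The main obstacle is handling the set where $V_a = 0$ rigorously — the Reilly formula as stated needs $V_a$ nonnegative and the weight can degenerate on the equatorial disk — so I expect the delicate point to be justifying that the Hessian identity $\bar\n^2\phi + K(n+1)\phi\,\bar g = 0$ propagates across $\{V_a = 0\}$; this can be dealt with either by a limiting/approximation argument shrinking $\Omega$ slightly inside $\{V_a > \varepsilon\}$, or by invoking that $\{V_a = 0\} \cap \Omega$ has measure zero and $\phi \in C^\infty(\bar\Omega \setminus \Gamma)$ so the smooth identity extends by continuity.
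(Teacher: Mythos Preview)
Your core step—applying the Reilly-type formula \eqref{qx} with $V=V_a$ and $f=\phi$ to obtain
\[
-\int_\Omega V_a\,\bigl|\bar\nabla^2\phi + K\phi\,\bar g\bigr|^2\,d\Omega \;=\; \int_\Sigma V_a H\,(\phi_\nu)^2\,dA
\]
—is exactly what the paper does. (Minor correction: the Hessian term is $K\phi\,\bar g$, not $K(n+1)\phi\,\bar g$, since $\bar\nabla^2 V_a/V_a=-K\bar g$.)

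Two remarks. First, your opening paragraph is superfluous and the sign claim there is wrong: by \eqref{bdrycurv} one has $(V_a)_{\bar N}/V_a$ equal to the principal curvature of $\partial B$ with respect to the \emph{outward} normal $\bar N$, namely $1$, $\coth R$, or $\cot R$; this is positive (at least for $K\le 0$ and for $R<\pi/2$ when $K=1$), not negative. That identity is never used, so simply drop it.

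Second, the concluding step differs from the paper. From the Reilly identity and $V_a\ge 0$, $H>0$ you correctly extract $\bar\nabla^2\phi+K\phi\,\bar g=0$ in $\Omega$ and $\phi_\nu=0$ on $\Sigma$, and then finish via unique continuation (vanishing Cauchy data on $\Sigma$ for the elliptic equation $\bar\Delta\phi+K(n+1)\phi=0$, or the second-order ODE $(\phi\circ\gamma)''+K(\phi\circ\gamma)=0$ along geodesics through a point of $\Sigma$, which in a space form reach every point). The paper instead invokes the explicit classification (Remark \ref{rem-classi}): any solution of $\bar\nabla^2\phi+K\phi\,\bar g=0$ has the form $\phi=\sum_{i=0}^{n+1} b_i V_i$; the Robin condition on $T$ forces $b_0=0$ (since each $V_i$ with $i\ge 1$ satisfies that condition identically while $V_0$ does not), and then $\phi|_\Sigma=0$ with some $b_i\neq 0$ would force $\Sigma$ to lie in a totally geodesic hyperplane through the origin, contradicting $H>0$. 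Your route is valid and avoids the classification; the paper's is more explicit and avoids quoting a unique-continuation theorem. Either is acceptable. Your handling of the degeneracy on $\{V_a=0\}$ by continuity is fine; the paper glosses over this point.
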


\begin{proof}
Let $\phi\in {W}^{1,2}(\O)$ be a weak solution of \eqref{mixed4}, i.e., $\phi=0$ on $\S$ and
\begin{eqnarray*}
\int_\O [\bar g(\bar \n \phi, \bar \n\varphi) - K(n+1)\phi \varphi] \, d\O=\int_T \phi \varphi dA, 
\end{eqnarray*}
for all $\varphi\in W^{1,2}(\O)$ with $\varphi=0$ on $\S$.
The classical elliptic PDE theory gives the regularity $\phi\in  C^\infty(\bar \Omega \backslash \Gamma).$ Now we can use the Reilly type formula, \eqref{qx}, as in the proof of Theorem \ref{HK}. Replacing $f$ by $\phi$ in \eqref{qx}, using \eqref{bdrycurv}, we get
\[
-\int_{\O}V_a|\bar \n^2  \phi+K \phi\bar g|^2d\O
=\int_{\Sigma} V_aH (\phi_\nu)^2 dA.
\]
Since $V_a$ and $H$ is positive, 
it follows  that  
\begin{eqnarray}\label{classif}
\bar \n^2  \phi+K \phi\bar g=0 \hbox{ in }\O
\end{eqnarray} and $\phi_\nu=0$ on $\S$.
From \eqref{classif} and Remark \ref{rem-classi}, we see $\phi$ must be of form
\begin{eqnarray*}
\phi=\sum_{i=0}^{n+1}b_iV_i,
\end{eqnarray*}
for $b_i\in \mathbb{R}$, $i=0,1,\cdots n+1$.
By checking the boundary condition $V_a \phi_{\bar N}-\phi(V_a)_{\bar N}=0$ on $T$, we see $b_0=0$. Moreover, since
$\phi=0$ on $\S$, $\S$ must be the totally geodesic hyperplane through the origin if one of $b_i\neq 0$, which is a contraction to $\S$ having positive mean curvature. We get the assertion.
\end{proof}


With this Proposition one can use the Fredholm alternative to get a unique weak solution of \eqref{mixed3}.

\medskip

\end{document}